\colorlet{mdtRed}{red!50!black}
\definecolor{dblue}{rgb}{0,0,.6}
\numberwithin{equation}{section}
\newtheorem{theorem}[equation]{Theorem}
\newtheorem{corollary}[equation]{Corollary}
\newtheorem{lemma}[equation]{Lemma}
\newtheorem{proposition}[equation]{Proposition}
\newtheorem{notation}[equation]{Notation}
\newtheorem*{theorem*}{Theorem}
\newtheorem*{corollary*}{Corollary}
\newtheorem*{proposition*}{Proposition}
\theoremstyle{remark}
\newcommand{\mf}[1]{\mathfrak{#1}}
\newcommand{\ms}[1]{\mathscr{#1}}
\newcommand{\mb}[1]{\mathbb{#1}}
\newcommand{\mc}[1]{\mathcal{#1}}
\begin{document}

\title{Automorphisms of relative Quot schemes}

\author{Chandranandan Gangopadhyay}

\address{School of Mathematics, Tata Institute of Fundamental Research,
Homi Bhabha Road, Mumbai 400005, India}

\email{chandrag@math.tifr.res.in}

\subjclass[2010]{14C05, 14J10,14J50, 14J60, 14L15}

\keywords{Automorphism group scheme, Quot Scheme, semistable bundle.}

\begin{abstract}
Let $k$ be an algebraically closed field of
characteristic zero. 
Let $S$ be a smooth projective variety over $k$
and let $p_S:X\rightarrow S$ be 
a family of smooth projective curves over $S$. 
Let $E$ be a vector bundle over $X$. For $s\in S$
let $X_s$ be the fibre of $p_S$ over $s$ 
and let $E_s$ be the restriction
of $E$ to $X_s$. Fix $d\geq 1$.
Let $\mc Q(E,d)\to S$ be the relative Quot scheme parameterizing
torsion quotients of $E_s$ over $X_s$ of degree $d$ for all $s\in S$. 
In this article
we compute the identity component of relative automorphism group scheme
which parameterizes automorphisms of $\mc Q(E,d)$ over $S$.
\end{abstract}

\if
and let $\{X_s\}_{s\in S}$ be a family of smooth
projective curves over $S$. Let $\{E_s\}_{s\in S}$
be a family of vector bundles over
 over $S$. 
Fix $d\geq 1$. Let $\mathcal{Q}(E,d)\to S$ 
be the relative quot scheme associted to the family $X\to S$ 
parametrizing torsion quotients 
of $E$  of degree $d$. In this article, we compute the 
the identity component of relative automorphism group scheme
of $\mc Q(E,d)$ over $S$.
 
 Then we show that if $r\geq 3$, then the identity component of the group of automorphisms of $\mathcal{Q}(E,d)$ over $S$ is isomorphic to the identity component of the group of automorphisms of $\mathbb{P}(E)$ over $S$. We also show that under additional hypotheses, the same statement is true in the case when r=2. As a corollary, the identity component of the automorphism group of flag scheme of filtrations of torsion quotients of $\mathcal{O}^{r}_{C}$, where $r\geq 3$ and $C$ a smooth projective curve of genus $\geq 2$ is computed. 
\fi

\maketitle

\section{Introduction}

Let $k$ be an algebraically closed field of characteristic zero. 
Let $Y\rightarrow S$ be a smooth morphism 
between two projective varieties over $k$. 
Associated to this morphism we have 
the automorphism group scheme ${\rm Aut}(Y/S)$ 
which parameterizes automorphisms of $Y$ over $S$. 
Let us denote identity component of 
${\rm Aut}(Y/S)$ by ${\rm Aut}^{o}(Y/S)$. It is known that 
${\rm Aut}^{o}(Y/S)$ is an algebraic group and if
 $\mathcal{T}_{Y/S}$ is the relative tangent bundle,  
then 
${\rm Lie}({\rm Aut}^{o}(Y/S))=H^{0}(Y,\mathcal{T}_{Y/S})$ \cite[Theorem 3.7]{MO}, \cite[Theorem 2.3]{Bri18}.
We refer to \cite{Bri14}, \cite{Bri18} for other properties of this group scheme.

We refer to \cite[Section 2]{HL} for definitions and properties of 
Quot Schemes in general. The Quot Scheme which we will study in this 
article can be defined in the following manner.
Let $p_{S}:X\rightarrow S$ be a family of smooth projective curves 
over an algebraically closed field $k$ of characteristic zero. 
Assume $X$ and $S$ are smooth projective varieties. 
Let $E$ be a vector bundle over $X$ of rank $r$. For a closed point 
$s\in S$
let $X_s$ be the fibre of $p_S$ over $s$ 
and let $E_s$ be the restriction
of $E$ to $X_s$.
Fix $d \geq 1$. Then associated to the morphism $p_S$ and 
the vector bundle $E$ we have the relative Quot scheme 
 $\pi_{S}:\mathcal{Q}(E,d)\rightarrow S$  
whose closed points correspond to quotients 
$E_s \rightarrow B_{d}$, $\forall s\in S$ 
where $B_{d}$ is a torsion sheaf of degree $d$ 
over the smooth projective curve 
$X_{s}$ \cite[Theorem 2.2.4]{HL}. 
 It is known that $\mc Q(E,d)$ is a smooth projective variety \cite[Proposition 2.2.8]{HL}. These schemes have been studied extensively. We refer the reader to \cite{BGL}, \cite{BDW}, \cite{BDH} for other properties of this scheme. 
In this article we compute the group scheme ${\rm Aut}^o(\mc Q(E,d)/S)$.
We recall that in the case when $S$ is a point and $E$ the 
trivial bundle of rank $r$ this group scheme was computed in 
\cite{BDH}. In \cite{BM}  this group scheme was computed in another special case.
We refer to Corollary \ref{corollary-g greater than 1} and Corollary \ref{automorphism of generalised quot scheme}
where these results are stated explicitly.

Over $X$ we fix a certain ample line bundle $\mc M$ (this line bundle is defined just before Lemma \ref{extending section-2}).
Then the main theorem of this article is
\begin{theorem*}[Theorem \ref{main theorem}]
 Suppose either $r:={\rm rank}~E\geq 3$ or $r=2$, $E$ is semistable with respect to $\mc M$ and genus of $X_s$ is $\geq 2$ for $s\in S$. In both of these cases we have isomorphisms
\begin{enumerate}
\item ${\rm Aut}^{o}(\mathcal{Q}(E,d))\cong {\rm Aut}^{o}(\mathbb{P}(E)/S)\,.$
\item $H^{0}(\mathbb{P}(E),\mathcal{T}_{\mathbb{P}(E)/S})\cong H^{0}(\mathcal{Q}(E,d),\mathcal{T}_{\mathcal{Q}(E,d)/S})\,.$
\end{enumerate}
\end{theorem*}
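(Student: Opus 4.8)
The plan is to prove both statements through the Lie algebras, exploiting the identification $\mathrm{Lie}(\mathrm{Aut}^{o}(Y/S)) = H^{0}(Y,\mathcal{T}_{Y/S})$ recalled in the introduction. Statement (2) is precisely the comparison of the two relevant Lie algebras, and statement (1) will be obtained by integrating a natural homomorphism that realizes this comparison. Thus the heart of the argument is the computation of $H^{0}(\mathcal{Q}(E,d),\mathcal{T}_{\mathcal{Q}(E,d)/S})$ and the verification that it agrees with the comparatively accessible space $H^{0}(\mathbb{P}(E),\mathcal{T}_{\mathbb{P}(E)/S})$. On the $\mathbb{P}(E)$ side I would first record the target: combining the relative Euler sequence of $\rho:\mathbb{P}(E)\to X$ (which gives $\rho_{*}\mathcal{T}_{\mathbb{P}(E)/X}=\mathcal{E}nd_{0}(E)$, the trace-free endomorphisms) with the tangent sequence of $\mathbb{P}(E)\to X\to S$, one sees that $H^{0}(\mathbb{P}(E),\mathcal{T}_{\mathbb{P}(E)/S})$ is assembled from $H^{0}(X,\mathcal{E}nd_{0}(E))$ and $H^{0}(X,\mathcal{T}_{X/S})$.

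For the Quot scheme I would set up the universal data. Writing $\mathcal{X}=X\times_{S}\mathcal{Q}(E,d)$ with projections $q:\mathcal{X}\to X$ and $\pi:\mathcal{X}\to\mathcal{Q}(E,d)$, there is a universal exact sequence
\[
0\to\mathcal{S}\to q^{*}E\to\mathcal{B}\to 0
\]
with $\mathcal{B}$ flat over $\mathcal{Q}(E,d)$ and fiberwise torsion of degree $d$. The relative tangent bundle is $\mathcal{T}_{\mathcal{Q}(E,d)/S}=\pi_{*}\mathcal{H}om(\mathcal{S},\mathcal{B})$, so that $H^{0}(\mathcal{Q}(E,d),\mathcal{T}_{\mathcal{Q}(E,d)/S})=H^{0}(\mathcal{X},\mathcal{H}om(\mathcal{S},\mathcal{B}))$. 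Applying $\mathcal{H}om(-,\mathcal{B})$ to the universal sequence and using that $q^{*}E$ is locally free while $\mathcal{B}$ is relatively torsion yields
\[
0\to\mathcal{E}nd(\mathcal{B})\to q^{*}E^{\vee}\otimes\mathcal{B}\to\mathcal{H}om(\mathcal{S},\mathcal{B})\to\mathcal{E}xt^{1}(\mathcal{B},\mathcal{B})\to 0.
\]
I would then compute cohomology term by term: the global sections of the locally free middle term $q^{*}E^{\vee}\otimes\mathcal{B}$, analyzed through the projection formula and the support of $\mathcal{B}$, account for the endomorphisms of $E$, while the torsion terms $\mathcal{E}nd(\mathcal{B})$ and $\mathcal{E}xt^{1}(\mathcal{B},\mathcal{B})$ encode the scalar that must be quotiented out and the infinitesimal motion of the support (hence the $\mathcal{T}_{X/S}$ contribution). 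The line bundle $\mathcal{M}$ and the section-extension lemma referenced in the excerpt should be used here to control these sections and reassemble them globally.

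The main obstacle, and the point where the rank hypotheses are forced in, is precisely this control of the outer terms $H^{0}(\mathcal{E}nd(\mathcal{B}))$ and $H^{0}(\mathcal{E}xt^{1}(\mathcal{B},\mathcal{B}))$: one must rule out \emph{extra} global vector fields coming from deforming the torsion quotient itself. When $r\geq 3$ the degeneracy loci of the universal quotient are of sufficiently high codimension that the relevant sections are forced to be fiberwise constant along the support morphism $\mathcal{Q}(E,d)\to\mathrm{Sym}^{d}_{S}(X)$, and the spurious terms collapse automatically. For $r=2$ this fails, and I would invoke semistability of $E$ with respect to $\mathcal{M}$ (to exclude destabilizing sub-line-bundles that would produce additional automorphisms) together with $g(X_{s})\geq 2$ (so that $H^{0}(X_{s},\mathcal{T}_{X_{s}})=0$ and the support cannot be continuously moved). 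I expect this step to require a careful stratification of $\mathcal{Q}(E,d)$ by the partition type of the support and an $h^{0}$-bound on each stratum; this is the technically delicate part of the proof.

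Finally, to promote the Lie algebra isomorphism (2) to the group scheme isomorphism (1), I would construct directly a homomorphism $\mathrm{Aut}^{o}(\mathbb{P}(E)/S)\to\mathrm{Aut}^{o}(\mathcal{Q}(E,d)/S)$. For $g(X_{s})\geq 1$ the projection $\rho:\mathbb{P}(E)\to X$ is canonical, so an automorphism of $\mathbb{P}(E)$ over $S$ amounts to an automorphism of $X/S$ together with a projective automorphism of $E$; such data transports a degree-$d$ torsion quotient to a degree-$d$ torsion quotient and hence acts on $\mathcal{Q}(E,d)$, defining the homomorphism. By construction it induces on Lie algebras exactly the isomorphism established in (2). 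Since both groups are smooth and connected over a field of characteristic zero, a homomorphism inducing an isomorphism of Lie algebras is an isomorphism of identity components, which gives (1).
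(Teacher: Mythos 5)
Your reduction of statement (2) to the universal sequence is fine as far as it goes: applying $\mathscr{H}om(-,\mathcal{B})$ to $0\to\mathcal{S}\to q^{*}E\to\mathcal{B}\to 0$ does give the four-term sequence you write (although note that $q^{*}E^{\vee}\otimes\mathcal{B}$ is a torsion sheaf, not locally free). But your proposal stops exactly where the theorem begins. Computing $H^{0}$ of $\mathscr{E}nd(\mathcal{B})$, $q^{*}E^{\vee}\otimes\mathcal{B}$ and $\mathscr{E}xt^{1}(\mathcal{B},\mathcal{B})$ directly on $X\times_{S}\mathcal{Q}(E,d)$ --- together with the connecting maps and $H^{1}$-terms of the two short exact sequences into which your four-term sequence splits --- is the entire difficulty, since these sheaves live on the universal support, whose geometry degenerates where the $d$ points collide; you defer all of this to ``a careful stratification \ldots and an $h^{0}$-bound on each stratum,'' with only heuristic indications of where $r\geq 3$, semistability and $g\geq 2$ enter. (In the actual argument semistability is used to kill sections of semistable bundles of negative degree, not to exclude ``destabilizing sub-line-bundles.'') The paper circumvents precisely this problem by never computing on $X\times_{S}\mathcal{Q}(E,d)$: it constructs a rational map $\Phi:\mathcal{Z}\dashrightarrow\mathcal{Q}(E,d)$ from the $d$-fold fibre product $\mathcal{Z}$ of $\mathbb{P}(E)$ over $S$, defined on an open set $\mathcal{U}$ with complement of codimension $\geq 2$, over which the universal quotient pulls back to the completely explicit sheaf $\bigoplus_{i}\pi^{*}_{2,i}\mathcal{O}(1)|_{\Delta_{i}}$; all the required vanishing is then proved by codimension counts on the diagonals (this is where $r\geq 3$ enters, via ${\rm codim}(\Delta_{i,j}\cap\Delta_{i,j,X},\Delta_{i,j,X})=r$) and section-extension lemmas, and finally the $S_{d}$-invariance of $\Phi$ cuts the resulting inclusion $H^{0}(\mathcal{Q}(E,d),\mathcal{T}_{\mathcal{Q}(E,d)/S})\hookrightarrow\bigoplus_{i=1}^{d}H^{0}(\mathbb{P}(E),\mathcal{T}_{\mathbb{P}(E)/S})$ down to the diagonal copy. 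Your plan contains no analogue of either mechanism, so statement (2) is not actually proved.

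The passage from (2) to (1) also contains a genuine error: a homomorphism of smooth connected algebraic groups in characteristic zero that induces an isomorphism on Lie algebras need only be an isogeny, not an isomorphism --- ${\rm SL}_{r}\to{\rm PGL}_{r}$ is the standard counterexample, and it is not a frivolous one here, since ${\rm GL}(E)/k^{*}$ sits inside ${\rm Aut}^{o}(\mathbb{P}(E)/S)$. To conclude you must in addition show that the homomorphism ${\rm Aut}^{o}(\mathbb{P}(E)/S)\to{\rm Aut}^{o}(\mathcal{Q}(E,d)/S)$ has trivial kernel; the paper does this in Lemma \ref{inclusion of aut of quot in aut of proj} by a concrete evaluation argument (an element acting trivially on $\mathcal{Q}(E,d)$ must fix all quotients of the form $E_{s}\to\mathcal{O}_{X_s,x}/\mathfrak{m}^{d}_{X_s,x}$, forcing the induced automorphism of $X$ to be the identity, and then fix all sums of distinct degree-one quotients, forcing the automorphism of $\mathbb{P}(E)$ itself to be the identity). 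Separately, your construction of the homomorphism assumes $g(X_{s})\geq 1$ so that the projection $\mathbb{P}(E)\to X$ is canonical, but the theorem permits genus $0$ and $1$ when $r\geq 3$; the descent of elements of ${\rm Aut}^{o}(\mathbb{P}(E)/S)$ to $X$ should instead be deduced from \cite[Corollary 2.2]{MO}, which requires no genus hypothesis.
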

As consequences of Theorem \ref{main theorem} we deduce the results of 
\cite{BDH} and \cite{BM} as Corollary \ref{corollary-g greater than 1} and Corollary \ref{automorphism of generalised quot scheme}. We also compute the identity component of the automorphism group scheme of the flag scheme parameterizing chains of torsion quotients of trivial bundle over a smooth projective curve(Corollary \ref{automorphisms of flag schemes}). We refer to Section \ref{Applications} for more details. 
\section{Main Theorem}\label{section-Main theorem}
Let us denote the projection $X\times_{S} \mathcal{Q}(E,d) \rightarrow X$ by $\pi_{X}$ and the projection $X\times_{S} \mathcal{Q}(E,d) \rightarrow \mathcal{Q}(E,d)$ by $p_{\mathcal{Q}}$ i.e. we have the following diagram:
\[
\begin{tikzcd}
X\times_{S} \mathcal{Q}(E,d) \arrow[r, "\pi_{X}"] \arrow[d,"p_{\mathcal{Q}}"] & X \arrow[d, "p_{S}"] \\
\mathcal{Q}(E,d) \arrow[r ,"\pi_{S}"] & S
\end{tikzcd}
\]
We denote the universal quotient on $X\times_{S}\mathcal{Q}(E,d)$ by
$$\pi_{\mathcal{Q}}^{*}E\rightarrow \mathcal{B}\rightarrow 0\,.$$
\begin{lemma}\label{inclusion of aut of quot in aut of proj} 
We have a closed immersion of algebraic groups
$${\rm Aut}^{o}(\mathbb{P}(E)/S) \hookrightarrow {\rm Aut}^{o}(\mathcal{Q}(E,d)/S)\,.$$
 \begin{comment}
\begin{enumerate}
\item ${\rm Aut}^{o}(\mathbb{P}(E)/S) \hookrightarrow {\rm Aut}^{o}(\mathcal{Q}(E,d)/S)$
\item $H^{0}(\mathbb{P}(E),\mathcal{T}_{\mathbb{P}(E)/S})\hookrightarrow H^{0}(\mathcal{Q}(E,d),\mathcal{T}_{\mathcal{Q}(E,d)/S})$
\end{enumerate}
\end{comment}
\end{lemma}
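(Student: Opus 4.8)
The plan is to construct a left action of $G := {\rm Aut}^{o}(\mathbb{P}(E)/S)$ on $\mathcal{Q}(E,d)$ over $S$ and then to argue that the resulting homomorphism of algebraic groups is a closed immersion. To build the action I would use the universal property of the relative Quot scheme. Recall first that any $T$-point $\sigma$ of $G$, being an automorphism of $\mathbb{P}(E)\times_S T$ over $T$ lying in the identity component, preserves the projective bundle structure and hence descends to an automorphism $f$ of $X\times_S T$ over $T$; equivalently, $\sigma$ is an isomorphism of $\mathbb{P}^{r-1}$-bundles $\mathbb{P}(E_T)\xrightarrow{\sim}\mathbb{P}(f^{*}E_T)$ over $X\times_S T$. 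Such an isomorphism determines a line bundle $N$ on $X\times_S T$ via $\sigma^{*}\mathcal{O}(1)\cong\mathcal{O}(1)\otimes\mathrm{pr}^{*}N$, and therefore a vector bundle isomorphism $\psi\colon E_T\xrightarrow{\sim}f^{*}E_T\otimes N$ which is canonical up to a global scalar. I would perform this construction universally over $G$: the universal automorphism of $\mathbb{P}(E)\times_S G$ descends to a universal automorphism $\tilde f$ of $X\times_S G$ over $G$, together with a line bundle $\mathcal{N}$ and an isomorphism $\tilde\psi\colon E\xrightarrow{\sim}\tilde f^{*}E\otimes\mathcal{N}$ on $X\times_S G$.

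Given this, I would define the action as follows. Pull back the universal quotient $\pi_{\mathcal{Q}}^{*}E\to\mathcal{B}$ to $X\times_S G\times_S\mathcal{Q}(E,d)$, transport it through $\tilde f^{-1}$ and $\tilde\psi$ (choosing the inverse so as to obtain a \emph{left} action), and twist by $\mathcal{N}$. The outcome is a surjection from the pullback of $E$ onto a sheaf that is torsion of relative degree $d$ along the curve fibres and flat over $G\times_S\mathcal{Q}(E,d)$, since $\tilde f$ is a fibrewise isomorphism and twisting by a line bundle preserves torsion length. By the universal property this family classifies a morphism $a\colon G\times_S\mathcal{Q}(E,d)\to\mathcal{Q}(E,d)$ over $S$. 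The point making $a$ well defined is that, although $\psi$ involves the auxiliary bundle $N$, the kernel subsheaf $\psi^{-1}(f^{*}F\otimes N)\subseteq E$ produced from a full-rank subsheaf $F\subseteq E$ depends only on the projective automorphism $\sigma$, not on the scalar ambiguity in $\psi$. I would then verify the action axioms---compatibility with composition in $G$ and with the identity---directly from functoriality, obtaining a homomorphism of $S$-group schemes $\rho\colon G\to{\rm Aut}(\mathcal{Q}(E,d)/S)$; as $G$ is connected and $\rho$ preserves identities, $\rho$ factors through ${\rm Aut}^{o}(\mathcal{Q}(E,d)/S)$.

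It remains to prove that $\rho$ is a closed immersion. In characteristic zero every homomorphism of algebraic groups with trivial kernel is a closed immersion, since its image is closed and $G/\ker\rho\to\mathrm{im}\,\rho$ is an isomorphism of (necessarily reduced) group schemes; so I would reduce to showing that $\ker\rho$ is trivial, i.e.\ that a $\sigma\in G(k)$ acting trivially on each fibre $\mathcal{Q}(E_s,d)$ must be the identity. Fixing $s$, for a torsion quotient $E_s\to T$ supported at a single point $x\in X_s$ the transformed quotient is supported at $f_s^{-1}(x)$, so triviality of the action forces $f_s^{-1}(x)=x$ for every $x$, whence $f_s={\rm id}$ and $N\cong\mathcal{O}_{X_s}$; thus $\psi$ is an automorphism of $E_s$. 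The remaining condition $\psi^{-1}(F)=F$ for all full-rank $F\subseteq E_s$ with length-$d$ quotient says, locally at each $x$, that $\psi$ stabilizes every colength-$d$ sublattice of $E_{s,x}$; letting $x$ and the modification vary forces $\psi$ to be fibrewise scalar, hence a global constant since $X_s$ is projective, so $\sigma={\rm id}$.

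The main obstacle I expect is the functorial construction of $a$. One must descend the universal $\mathbb{P}(E)$-automorphism to $X\times_S G$, produce the twisting line bundle $\mathcal{N}$ and the lift $\tilde\psi$ over the whole base (a priori these exist only up to a line bundle pulled back from $G$, which must be shown not to affect the classified morphism), and check that the transformed family is flat with torsion degree exactly $d$, so that the universal property applies. By contrast, the injectivity of $\rho$ is comparatively soft, resting only on the transformation law for the supports of torsion quotients and the local lattice-stabilization argument.
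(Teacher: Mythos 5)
Your proposal is correct and follows essentially the same route as the paper: descend an automorphism of $\mathbb{P}(E)$ to an automorphism $h$ of $X$ via Matsumura--Oort, lift it to a bundle isomorphism $E\cong h^{*}E\otimes p^{*}L$ (unique up to scalar), feed the twisted pullback of the universal quotient into the universal property of $\mathcal{Q}(E,d)$, and prove triviality of the kernel by first using supports of torsion quotients to force $h={\rm id}$ and then a scalar-rigidity argument to force the bundle automorphism to be a global scalar. The only (harmless) deviations are that you set up the action functorially over $G$ rather than pointwise, and your injectivity step uses colength-$d$ sublattices supported at a single point, where the paper spreads the degree-$d$ quotient over $d$ distinct points and recovers the degree-$1$ data by restriction; both arguments are valid.
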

\begin{proof} 
By \cite[Corollary 2.2]{MO} any automorphism $g\in \text{Aut}^{o}(\mathbb{P}(E)/S)$ descends to an automorphism $h\in \text{Aut}^{o}(X/S)$. Therefore we have the following diagram:
\[
\begin{tikzcd}
\mathbb{P}(E)\cong \mathbb{P}((g_1)^{*}E) \arrow[r,"g"] \arrow[d,"p"] & \mathbb{P}(E) \arrow[d,"p"] \\
X \arrow[r,"h"] & X
\end{tikzcd}
\]
Then $E\cong h^{*}  E\otimes p^{*}L$ 
for some line bundle $L$ on $X$. 
Let us denote this isomorphism of bundles by $\Psi_{g}$.
\if
For $s\in S$ we define $\Psi_{g,s}:=\Psi_g|_{X_s}$. 
Then $\Psi_{g}$ induces an automorphism of 
$\mathcal{Q}(E,d)$ by sending 
$$[E_s \rightarrow B_{d}\rightarrow 0]
\to 
[E_s \xrightarrow{\Psi_{g,s}} (g')^{*}(E_s \rightarrow B_{d}\rightarrow 0)\otimes \mathcal{L}_s]$$
Hence we have a homomorphism $\text{Aut}^{o}(\mathbb{P}(E)/S) \rightarrow \text{Aut}^{o}(\mathcal{Q}(E,d)/S)$ and clearly it is injective.

Let $g\in \text{Aut}^{o}(\mathbb{P}(E)/S)$.
Note that by \cite[Cor 2.2]{7}, any automorphism $g\in \text{Aut}^{o}(\mathbb{P}(E)/S)$ descends to an automorphism $g'\in \text{Aut}^{o}(X/S)$. Therefore we have the following diagram:\\

\hspace{2cm}\begin{tikzcd}
\mathbb{P}(E)\cong \mathbb{P}((g')^{*}E) \arrow[r,"g"] \arrow[d,"f"] & \mathbb{P}(E) \arrow[d,"f"] \\
X \arrow[r,"g'"] & X
\end{tikzcd} \\
Then, $E\cong (g')^{*}  E\bigotimes p^{*}\mathcal{L}$ for some line bundle $\mathcal{L}$ on $X$. Let us denote this isomorphism of bundles by $\Psi_{g}$.\\

Let us denote the projection $X\times_{S} \mathcal{Q}(E,d) \rightarrow X$ by $\pi_{X}$ and the projection $X\times_{S} \mathcal{Q}(E,d) \rightarrow \mathcal{Q}(E,d)$ by $p_{\mathcal{Q}}$ i.e. we have the following diagram:
\[
\begin{tikzcd}
X\times_{S} \mathcal{Q}(E,d) \arrow[r, "\pi_{X}"] \arrow[d,"p_{\mathcal{Q}}"] & X \arrow[d, "p_{S}"] \\
\mathcal{Q}(E,d) \arrow[r ,"\pi_{S}"] & S
\end{tikzcd}
\]
and we have the universal quotient $\pi_{\mathcal{Q}}^{*}E\rightarrow \mathcal{B}\rightarrow 0$ on $X\times_{S}\mathcal{Q}(E,d)$.
\fi
On $X\times_S \mc Q(E,d)$ consider the quotient 
$$\pi^{*}_{\mathcal{Q}}E\cong \pi^{*}_{\mathcal{Q}}h^{*}E\otimes \pi^{*}_{\mathcal{Q}}L\rightarrow (h\times id)^{*}\mathcal{B}\otimes \pi^{*}_{\mathcal{Q}}L$$ 
where the first isomorphism is induced from $\Psi_{g}$ and the second morphism is the pullback of the universal quotient $\pi^{*}_{\mathcal{Q}}E\rightarrow \mathcal{B}\rightarrow 0$ under the map $(h\times {id})$ tensored with $\pi^{*}_{\mathcal{Q}}L$. This gives a quotient of $\pi^{*}_{\mathcal{Q}}E$ over $X\times_{S}\mathcal{Q}(E,d)$ and by the universal property of Quot schemes this induces an automorphism of $\mathcal{Q}(E,d)$. Hence, we have a homomorphism 
$$\text{Aut}^{o}(\mathbb{P}(E)/S) \rightarrow \text{Aut}^{o}(\mathcal{Q}(E,d)/S)\,.$$ Next we show that this homomorphism is injective. At the level of closed points the above automorphism of $\mathcal{Q}(E,d)$ induced by $g\in {\rm Aut}^o(\mb P(E)/S)$ is given by 
$$[E_s \rightarrow B_{d}\rightarrow 0]
\to 
[E_s \xrightarrow{\Psi_{g,s}} h^{*}(E_s \rightarrow B_{d}\rightarrow 0)\otimes L_s]$$
Suppose $g\in \text{Aut}^{o}(\mathbb{P}(E)/S)$ induces the identity automorphism on $\mathcal{Q}(E,d)$. We will show that $g=\text{id}$.
First we show that $h={\rm id}$. Fix $x\in X$ and let $p_S(x)=s\in S$. Consider any quotient 
$$E_s\to \mathcal{O}_{X_s,x}/\mf{m}_{X_s,x}^{d}$$ 
where $\mathcal{O}_{X_s,x}$ is the local ring of $X_s$ at $x$ and $\mf m_{X_s,x}$ is its maximal ideal. Then under the automorphism induced by $g$ the image of this quotient is of the form
$$E_s\to  \mathcal{O}_{X_s,h(x)}/\mf{m}_{X_s,h(x)}^{d} $$
Hence if $g$ induces the identity automorphism of $\mc Q(E,d)$ then 
$h={\rm id}$.
Next we show that $g=\text{id}$. Let $v\in \mb P(E)$, let $p(v)=x$ and $p_S(x)=s$. Then $v$ corresponds to a quotient of vector spaces
$E|_{x}\xrightarrow{v} k$. Since, $h=\text{id}$, $p(g(v))=x$. is a quotient of the form $E|_{x}\rightarrow k$.
Let us fix $d-1$  degree $1$ quotients  $E\xrightarrow{v_i} k_{x_i}$ for $1\leq i \leq d-1$ such that all $x_i,x$ are distinct and $p_S(x_{i})=p_S(x)=s$. Then  the summation of these quotients gives us a point in $\mc Q(E,d)$ 
$$E_s \to E|_{x} \oplus  \bigoplus_{i=1}^{d-1}E|_{x_{i}} \to k_{x} \oplus \bigoplus_{i=1}^{d-1} k_{x_{i}}\,.$$ 
Note that each of the quotients $E|_{x}\xrightarrow{h} k$ and $E|_{x_{i}}\rightarrow k$ can be recovered from the above degree $d$ quotient simply by restricting this quotient to the points $x$ and $x_i$ respectively. By assumption the automorphism induced by $g$ is identity. Therefore applying the automorphism induced by $g$ and restricting it to $x$, we get that $g(v)=v$. This completes the proof of injectivity.
\end{proof}

\begin{corollary}\label{inclusion of lie algebras}
We have an inclusion of lie algebras
$$H^{0}(\mathbb{P}(E),\mathcal{T}_{\mathbb{P}(E)/S})\hookrightarrow H^{0}(\mathcal{Q}(E,d),\mathcal{T}_{\mathcal{Q}(E,d)/S})\,.$$
\end{corollary}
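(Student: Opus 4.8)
The plan is to derive this simply by applying the Lie algebra functor to the closed immersion of algebraic groups produced in Lemma \ref{inclusion of aut of quot in aut of proj}. Recall from the introduction that for a smooth projective morphism $Y\to S$ the identity component ${\rm Aut}^o(Y/S)$ is an algebraic group whose Lie algebra is canonically identified with the space of global relative vector fields, namely ${\rm Lie}({\rm Aut}^o(Y/S))\cong H^0(Y,\mathcal{T}_{Y/S})$ \cite[Theorem 3.7]{MO}, \cite[Theorem 2.3]{Bri18}. Applying this identification to $Y=\mathbb{P}(E)$ and to $Y=\mathcal{Q}(E,d)$ produces exactly the two global section spaces appearing in the statement.

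First I would observe that since $\mathrm{char}\,k=0$, the algebraic groups ${\rm Aut}^o(\mathbb{P}(E)/S)$ and ${\rm Aut}^o(\mathcal{Q}(E,d)/S)$ are smooth. Hence the closed immersion
$$j:{\rm Aut}^{o}(\mathbb{P}(E)/S) \hookrightarrow {\rm Aut}^{o}(\mathcal{Q}(E,d)/S)$$
of Lemma \ref{inclusion of aut of quot in aut of proj} is an immersion of smooth varieties, so its differential at the identity element is injective on tangent spaces. Since the Lie algebra of an algebraic group is its tangent space at the identity and the formation of the Lie algebra is functorial for homomorphisms of algebraic groups, the induced map
$${\rm Lie}(j):{\rm Lie}({\rm Aut}^{o}(\mathbb{P}(E)/S))\longrightarrow {\rm Lie}({\rm Aut}^{o}(\mathcal{Q}(E,d)/S))$$
is an injective homomorphism of Lie algebras.

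Finally, I would combine these two ingredients: transporting ${\rm Lie}(j)$ across the canonical identifications ${\rm Lie}({\rm Aut}^o(-/S))\cong H^0(-,\mathcal{T}_{-/S})$ yields the desired injection $H^{0}(\mathbb{P}(E),\mathcal{T}_{\mathbb{P}(E)/S})\hookrightarrow H^{0}(\mathcal{Q}(E,d),\mathcal{T}_{\mathcal{Q}(E,d)/S})$. There is no substantial obstacle here: the entire content is already contained in Lemma \ref{inclusion of aut of quot in aut of proj}, and this corollary is merely its formal shadow on tangent spaces at the identity. The only point requiring any care is the injectivity of ${\rm Lie}(j)$, which is guaranteed because $j$ is a closed immersion between smooth group schemes over a field of characteristic zero.
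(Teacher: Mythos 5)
Your proposal is correct and is exactly the paper's argument: the paper's proof likewise combines the closed immersion of Lemma \ref{inclusion of aut of quot in aut of proj} with the identification ${\rm Lie}({\rm Aut}^{o}(Y/S))\cong H^{0}(Y,\mathcal{T}_{Y/S})$ from \cite[Theorem 3.7]{MO}. You have merely spelled out the routine details (smoothness in characteristic zero, injectivity of the differential of a closed immersion) that the paper leaves implicit.
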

\begin{proof}
This follows  from 
Lemma \ref{inclusion of aut of quot in aut of proj} 
and \cite[Theorem 3.7]{MO}.
\end{proof}
\if
We will prove that the inclusion in 
Lemma \ref{inclusion of aut of quot in aut of proj} 
and Corollary \ref{inclusion of lie algebras} 
are actually isomorphisms. 
We will show this first for
the case $S=\{{\rm pt}\}$.  
\fi
Let  $\mathcal{Z}$ be the fibered product 
of $d$ copies of $\mathbb{P}(E)$ over $S$. 
We will construct a rational map 
$\Phi:\mc Z \dashrightarrow \mc Q(E,d)$.
Note that this map was already constructed 
in \cite[Section 2]{G18} in the special case when $S=\{{\rm pt}\}$ and $E=\mc O^r_X$.
First we set some notations.
\begin{notation}~~\label{notation}
\begin{enumerate}
\item Let $p:\mathbb{P}(E)\rightarrow X$ be the projection. 
\item Let $p_{i}:\mc Z \to \mb P(E)$ be the $i$-th projection. 
\item For, $i\neq j$, let $\Delta_{i,j}\hookrightarrow \mc Z$ be the closed subscheme  given by the equation $p_{i}=p_{j}$. 
\item For $i,j$ distinct let $\Delta_{i,j,X}\hookrightarrow \mc Z$ be the closed subscheme given by the equation $p\circ p_{i}=p\circ p_{j}$.
\item For $i,j,k$ all distinct, let $\Delta_{i,j,k,X}\hookrightarrow \mc Z$ be the closed subscheme given by the equation $p\circ p_{i}=p\circ p_{j}=p \circ p_{k}$.
\item Let $\pi_{1}:X\times_S \mc Z\to X$ and $\pi_{2}:X\times_S \mc Z\to \mc Z$ be the first and second projections respectively. 
\item Let $p_{i}\circ \pi_{2}:X\times_S \mc Z\to \mb P(E)$ be denoted by $\pi_{2,i}$.
\item Let $\Delta_{i}\hookrightarrow X\times_{S} \mathcal{Z}$ be the closed subscheme given by the equation $\pi_{1}=p\circ \pi_{2,i}$.
\end{enumerate}
\end{notation}
We define an open set 
$$\mathcal{U}:=\mathcal{Z} \setminus (\bigcup\limits_{i,j} \Delta_{i,j} \bigcup\bigcup\limits_{i,j,k } \Delta_{i,j,k,X} )\,.$$
Consider the following compostion of morphisms over $X\times_{S}\mathcal{Z}$
$$
q:\pi^{*}_{1}E \to \bigoplus\limits^d_{i=1} \pi^{*}_{1}E|_{\Delta_{i}} \cong \bigoplus\limits^d_{i=1} \pi^{*}_{2,i}E|_{\Delta_{i}}  \to   \bigoplus\limits_{i=1}^{d} \pi^{*}_{2,i}\mathcal{O}(1)|_{\Delta_{i}}\,.
$$
\begin{comment}
Here, $\pi^{*}_{1}E \rightarrow \pi^{*}_{2,i}\mathcal{O}(1)|_{\Delta_{i}}$ is given by the composition of morphisms $\pi^{*}_{1}E \rightarrow \pi^{*}_{1}E|_{\Delta_{i}}$ followed by $\pi^{*}_{1}E|_{\Delta_{i}} \cong \pi^{*}_{2,i}E|_{\Delta_{i}} \rightarrow \pi^{*}_{2,i} \mathcal{O}(1)|_{\Delta_{i}}$.
\end{comment}
Let $u \in \mathcal{U}$. Then $q|_{X\times_S u}$ is the morphism 
$$E\to \bigoplus\limits^d_{i=1} E|_{p\circ p_i(u)}\to \bigoplus\limits^d_{i=1} k_{p\circ p_i(u)}$$
where the map $E|_{p\circ p_i(u)}\to k_{p\circ p_i(u)}$ is given by $p_i(u)\in \mb P(E)$. Since $u\in \mc U$  for any $1\leq i \leq d$, there can exist atmost one pair $j\neq i$ such that $p\circ p_{i}(u)=p\circ p_{j}(u)$, and for such a pair $(i,j)$, $p_{i}(u)\neq p_{j}(u)$. Hence $q|_{X\times_S u}$ is a surjection. Therefore $q|_{X\times_S \mc U}$ is a surjection. By universal property of $\mathcal{Q}(E,d)$ the surjection $q|_{X\times_S \mc U}$ induces a map 
$$\Phi:\mathcal{U} \rightarrow \mathcal{Q}(E,d)\,.$$ 
\if
 $(\bigoplus\limits_{i=1}^{d} \pi^{*}_{2,i}\mathcal{O}(1)|_{\Delta_{i}})|_{X\times_{S}\{u\}}=\bigoplus\limits_{i=1}^{d} k_{p\circ p_{i}(u)}$, and the surjection $E|_{X\times_{S}\{u\}}\rightarrow k_{p\circ p_{i}(u)}$ is given by the element $p_{i}(u)$. \\
By definition of $\mathcal{U}$, for any $1\leq i \leq d$, there can exist atmost one $j\neq i$ such that $p\circ p_{i}(u)=p\circ p_{j}(u)$, and for such a pair $(i,j)$, $p_{i}(u)\neq p_{j}(u)$, i.e. the homomorphism $E|_{X\times_{S}\{u\}}\rightarrow k_{p\circ p_{i}(u)}\bigoplus k_{p\circ p_{j}(u)}$ is a surjection.\\
Therefore, $q|_{X\times_{S} \{u\}}$ is surjective $\forall u\in \mathcal{U}$, and clearly $(\bigoplus\limits_{i=1}^{d} \pi^{*}_{2,i}\mathcal{O}(1)|_{\Delta_{i}})|_{X\times_{S}\{u\}}=\bigoplus\limits_{i=1}^{d} k_{p\circ p_{i}(u)}$ has length $d$.
\fi
Then we prove the following proposition
\begin{proposition*}[Propositon \ref{global sections of pullback of the tangent bundle}]
Suppose either $r\geq 3$ or $r=2$, $E$ is semistable with respect to $\mc M$ and genus of $C$ is $\geq 2$. In both of these cases we have  an isomorphism
$$H^{0}(\mathcal{U},\Phi^{*}\mathcal{T}_{\mathcal{Q}(E,d)/S})=\bigoplus\limits_{i=1}^{d}H^{0}(\mathbb{P}(E),\mathcal{T}_{\mathbb{P}(E)/S})\,.$$
\end{proposition*}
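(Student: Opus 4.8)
The plan is to compute $H^{0}(\mathcal{U},\Phi^{*}\mathcal{T}_{\mathcal{Q}(E,d)/S})$ by decomposing the pullback into the $d$ ``factors'' coming from the ordered direct sum defining $\Phi$, and then to control the discrepancy between this bundle and $\bigoplus_{i}p_{i}^{*}\mathcal{T}_{\mathbb{P}(E)/S}$ along the locus where two base points of the quotient collide. First I would record that, since the universal quotients are torsion on the fibres of $p_{S}$, the obstruction $\mathcal{E}xt^{1}(\mathcal{K},\mathcal{B})$ vanishes relatively ($\mathcal{K}$ is locally free, $\mathcal{B}$ has relative-dimension-zero support), so $\mathcal{T}_{\mathcal{Q}(E,d)/S}=(p_{\mathcal{Q}})_{*}\mathcal{H}om(\mathcal{K},\mathcal{B})$ for the universal sequence $0\to\mathcal{K}\to\pi_{\mathcal{Q}}^{*}E\to\mathcal{B}\to0$. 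This pushforward commutes with base change, and the quotient defining $\Phi$ is the ordered sum $\mathcal{B}|_{\mathcal{U}}=\bigoplus_{i=1}^{d}\mathcal{B}_{i}$ with $\mathcal{B}_{i}=\pi_{2,i}^{*}\mathcal{O}(1)|_{\Delta_{i}}$, so pulling back gives
\[
\Phi^{*}\mathcal{T}_{\mathcal{Q}(E,d)/S}\cong(\pi_{2})_{*}\mathcal{H}om(\mathcal{K}_{\mathcal{U}},\mathcal{B}|_{\mathcal{U}})=\bigoplus_{i=1}^{d}\mathcal{F}_{i},\qquad \mathcal{F}_{i}:=(\pi_{2})_{*}\mathcal{H}om(\mathcal{K}_{\mathcal{U}},\mathcal{B}_{i}).
\]
Each $\mathcal{F}_{i}$ is locally free of rank $r$ (the fibrewise $\mathrm{Hom}$ has constant dimension $r$), and by the $S_{d}$-symmetry permuting the factors it suffices to prove $H^{0}(\mathcal{U},\mathcal{F}_{i})\cong H^{0}(\mathbb{P}(E),\mathcal{T}_{\mathbb{P}(E)/S})$ for one index $i$.

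Next I would analyse $\mathcal{F}_{i}$ by applying $(\pi_{2})_{*}\mathcal{H}om(-,\mathcal{B}_{i})$ to the universal sequence. As $\pi_{1}^{*}E$ is locally free and $\mathcal{H}om(\mathcal{B}_{j},\mathcal{B}_{i})=0$ for $j\neq i$ (the $\Delta_{j},\Delta_{i}$ are distinct integral divisors, so no nonzero $\mathcal{O}_{\Delta_{i}}$-section is annihilated by $I_{\Delta_{j}}$), this produces a four term exact sequence whose terms I can identify. The middle term is $(\pi_{2})_{*}\mathcal{H}om(\pi_{1}^{*}E,\mathcal{B}_{i})=p_{i}^{*}(p^{*}E^{\vee}\otimes\mathcal{O}(1))$, the sub is $\mathcal{O}_{\mathcal{U}}$ (the $j=i$ part of $\mathcal{H}om(\mathcal{B}_{\mathcal{U}},\mathcal{B}_{i})$), and the cokernel is $p_{i}^{*}p^{*}\mathcal{T}_{X/S}\oplus\bigoplus_{j\neq i}T_{j}$, where the first summand is $\mathcal{E}xt^{1}(\mathcal{B}_{i},\mathcal{B}_{i})=N_{\Delta_{i}}\cong p_{i}^{*}p^{*}\mathcal{T}_{X/S}$ and each $T_{j}=\mathcal{E}xt^{1}(\mathcal{B}_{j},\mathcal{B}_{i})$ is a line bundle supported on the collision divisor $D_{ij}:=\Delta_{i,j,X}\cap\mathcal{U}$. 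Splicing in the relative Euler sequence $0\to\mathcal{O}\to p^{*}E^{\vee}(1)\to\mathcal{T}_{\mathbb{P}(E)/X}\to0$ yields
\[
0\to p_{i}^{*}\mathcal{T}_{\mathbb{P}(E)/X}\to\mathcal{F}_{i}\to p_{i}^{*}p^{*}\mathcal{T}_{X/S}\oplus\bigoplus_{j\neq i}T_{j}\to0.
\]
On the open set $\mathcal{U}_{0}\subset\mathcal{U}$ of pairwise distinct base points the torsion terms disappear and this restricts to the pulled-back relative tangent sequence of $\mathbb{P}(E)/S$, so $\mathcal{F}_{i}|_{\mathcal{U}_{0}}\cong p_{i}^{*}\mathcal{T}_{\mathbb{P}(E)/S}$; the point is that $\mathcal{F}_{i}$ genuinely differs from $p_{i}^{*}\mathcal{T}_{\mathbb{P}(E)/S}$ exactly along the codimension-one collision divisors.

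I would then compute the ``expected'' part. The complement $\mathcal{Z}\setminus\mathcal{U}$ is a union of the loci $\Delta_{i,j}$ (codimension $r$) and $\Delta_{i,j,k,X}$ (codimension $2$), hence has codimension $\geq2$ whenever $r\geq2$; therefore global sections of any bundle pulled back from $\mathcal{Z}$ extend across it. Combined with the projection formula for $p_{i}\colon\mathcal{Z}\to\mathbb{P}(E)$ and $p\colon\mathbb{P}(E)\to X$ (using $(p_{i})_{*}\mathcal{O}_{\mathcal{Z}}=\mathcal{O}_{\mathbb{P}(E)}$, which holds as the complementary factor $\mathbb{P}(E)^{\times_{S}(d-1)}\to S$ has connected fibres with $\mathcal{O}$-pushforward $\mathcal{O}_{S}$, and $p_{*}\mathcal{O}_{\mathbb{P}(E)}=\mathcal{O}_{X}$), this gives $H^{0}(\mathcal{U},p_{i}^{*}\mathcal{T}_{\mathbb{P}(E)/X})=H^{0}(\mathbb{P}(E),\mathcal{T}_{\mathbb{P}(E)/X})$ and $H^{0}(\mathcal{U},p_{i}^{*}p^{*}\mathcal{T}_{X/S})=H^{0}(X,\mathcal{T}_{X/S})$. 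Taking the long exact sequence of the displayed sequence and comparing it with that of the relative tangent sequence of $\mathbb{P}(E)/S$ on $\mathbb{P}(E)$, I expect the two computations to agree term by term except for the contribution $\bigoplus_{j\neq i}H^{0}(D_{ij},T_{j})$ of the collision torsion.

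The main obstacle is precisely to show that this collision torsion contributes nothing, i.e. that the connecting homomorphism $\delta\colon H^{0}(\mathcal{U},p_{i}^{*}p^{*}\mathcal{T}_{X/S})\oplus\bigoplus_{j}H^{0}(D_{ij},T_{j})\to H^{1}(\mathcal{U},p_{i}^{*}\mathcal{T}_{\mathbb{P}(E)/X})$ restricts to the usual connecting map on the first summand and is injective on the second. Geometrically, the extra first-order directions created when two supports of the quotient collide deform the length-two structure at the collision point and cannot be realised by moving the two points of $\mathbb{P}(E)$ independently; I must show they do not integrate to global sections. I expect this to be exactly where the numerical hypotheses enter: when $r\geq3$ the fibres of $p$ are $\mathbb{P}^{r-1}$ with $r-1\geq2$, leaving enough room that the relevant restriction maps on the fibres over $D_{ij}$ vanish unconditionally, whereas for $r=2$ the fibres are only $\mathbb{P}^{1}$ and one must instead use semistability of $E$ together with $g\geq2$ (in particular $H^{0}(X_{s},\mathcal{T}_{X_{s}})=0$, and the pertinent twisted bundles on the fibres have no sections) to kill the same contribution. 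Granting this vanishing, $H^{0}(\mathcal{U},\mathcal{F}_{i})\cong H^{0}(\mathbb{P}(E),\mathcal{T}_{\mathbb{P}(E)/S})$, and summing over $i$ proves the proposition.
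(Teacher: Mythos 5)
Your structural reduction is essentially correct and, up to reorganization, coincides with the paper's: your decomposition $\Phi^{*}\mathcal{T}_{\mathcal{Q}(E,d)/S}\cong\bigoplus_{i}\mathcal{F}_{i}$ is Lemma \ref{pullback of the tangent bundle}; the vanishing $\mathscr{H}om(\mathcal{B}_{j},\mathcal{B}_{i})=0$ for $j\neq i$ is Lemma \ref{hom of certain sheaves}; your identification of the collision torsion $T_{j}=(\pi_{2})_{*}\mathscr{E}xt^{1}(\mathcal{B}_{j},\mathcal{B}_{i})$ as a line bundle on $\Delta_{i,j,X}\cap\mathcal{U}$ is Lemma \ref{ext sheaves simplified} and Corollary \ref{pushforward of ext sheaves simplified}; and your computation of the ``expected'' terms via codimension $\geq 2$ and projection formula is Lemma \ref{comparision with global section of the tangent bundle of projective bundle}. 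The one structural difference is that the paper applies $\mathscr{H}om(-,\mathcal{B}_{j})$ not to the universal sequence but to the snake-lemma sequence $0\to\mathcal{F}(E,d)\to(\pi_{1}\times\pi_{2,j})^{*}\mathcal{F}(E,1)\to\bigoplus_{i\neq j}\mathcal{B}_{i}\to 0$, so that $H^{0}(\mathbb{P}(E),\mathcal{T}_{\mathbb{P}(E)/S})$ appears as the $H^{0}$ of a single sheaf and no connecting-homomorphism analysis is ever needed; your splicing with the Euler sequence forces you to track $\delta$, which is strictly extra work.

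The genuine gap is that you never prove the statement on which everything rests: that the collision terms contribute nothing. You write ``I expect this to be exactly where the numerical hypotheses enter'' and ``Granting this vanishing\ldots'', but this vanishing \emph{is} the technical core of the proposition, occupying Lemmas \ref{codimension of various closed sets}, \ref{extending sections-1}, \ref{extending section-2} and the two propositions asserting $H^{0}(X\times_{S}\mathcal{U},\mathscr{E}xt^{1}(\mathcal{B}_{i},\mathcal{B}_{j}))=0$ (Proposition \ref{global sections of ext sheaves} and its $r\geq 3$ counterpart). What is actually true is stronger and cleaner than the injectivity of $\delta$ you ask for (which by itself would not suffice anyway, since you would still have to exclude elements of $\ker\delta$ mixing the two summands): one has the outright vanishing $H^{0}(D_{ij},T_{j})=0$. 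Moreover the mechanism is not the fibrewise one you guess. One must extend sections of the line bundle $\mathcal{L}=p_{i}^{*}\mathcal{O}(-1)\otimes p_{j}^{*}\mathcal{O}(1)\otimes(p\circ p_{i})^{*}\mathcal{T}_{X/S}$ from $\Delta_{i,j,X}\cap\mathcal{U}$ to all of $\Delta_{i,j,X}$, allowing poles along the divisors $\Delta_{i,j,k,X}$, and then kill the resulting global sections by pushing forward along the projection forgetting the $i$-th factor, using $p_{*}\mathcal{O}(-1)=0$ (Lemma \ref{extending sections-1}). When $r\geq 3$, the diagonal $\Delta_{i,j}\subset\Delta_{i,j,X}$ has codimension $r-1\geq 2$, so sections extend across it for free; when $r=2$ it is a divisor, poles along it must also be admitted, and controlling them via $\mathcal{O}(\Delta_{i,j})|_{\Delta_{i,j}}\cong p_{i}^{*}\mathcal{T}_{\mathbb{P}(E)/X}|_{\Delta_{i,j}}$ leads to sections of $S^{2n}(E)\otimes(\det E^{\vee})^{n}\otimes\mathcal{T}_{X/S}^{m}$ on $X$, which vanish because that bundle is semistable of negative degree (Lemma \ref{extending section-2}) --- this is where semistability of $E$ and genus $\geq 2$ enter, not merely through $H^{0}(X_{s},\mathcal{T}_{X_{s}})=0$. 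Without this argument, or an equivalent, your proposal does not prove the proposition.
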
 

To prove Proposition \ref{global sections of pullback of the tangent bundle}
we need a few lemmas. We define $\mc F(E,d):={\rm ker}~q$.
 
\begin{lemma}\label{pullback of the tangent bundle}
We have an isomorphism of vector bundles
$$\Phi^*\mc T_{\mc Q(E,d)/S}\cong \bigoplus\limits^d_{i=1}(\pi_2)_*\ms Hom(\mc F(E,d),\pi^{*}_{2,i}\mathcal{O}(1)|_{\Delta_{i}})\vert_{\mc U}\,.$$
\end{lemma}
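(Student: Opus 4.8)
The plan is to first identify the relative tangent bundle of the Quot scheme in terms of the universal data, and then transport this identification across $\Phi$ by base change. Write $\mc K:=\ker(\pi_{\mc Q}^{*}E\to\mc B)$ for the universal kernel on $X\times_{S}\mc Q(E,d)$. Since $\mc B$ is flat over $\mc Q(E,d)$ by the defining property of the Quot functor, and restricts on each fibre $X_s\times\{q\}$ to a torsion quotient of $E_s$ on the smooth curve $X_s$, the kernel $\mc K$ is flat over $\mc Q(E,d)$ and locally free (a full rank subsheaf of a bundle on a smooth curve). The standard deformation theory of Quot schemes identifies the tangent space at a point $[E_s\twoheadrightarrow B]$ with kernel $K$ as $\mathrm{Hom}_{X_s}(K,B)=H^{0}(X_s,\ms Hom(K,B))$; relativizing over $S$ this yields
\[
\mc T_{\mc Q(E,d)/S}\cong (p_{\mc Q})_{*}\ms Hom(\mc K,\mc B)\,.
\]

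Second, I would check that the formation of this pushforward commutes with arbitrary base change. Because $\mc K$ is locally free we have $\ms Hom(\mc K,\mc B)\cong\mc K^{\vee}\otimes\mc B$, which is flat over $\mc Q(E,d)$; and on each fibre $\ms Hom(K,B)\cong K^{\vee}\otimes B$ is a torsion sheaf on the curve $X_s$, so $H^{1}(X_s,\ms Hom(K,B))=0$. Hence $R^{1}(p_{\mc Q})_{*}\ms Hom(\mc K,\mc B)=0$, and by cohomology and base change the sheaf $(p_{\mc Q})_{*}\ms Hom(\mc K,\mc B)$ is locally free with formation commuting with every base change, in particular with $\Phi$. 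This is the step I expect to require the most care, since $\Phi$ is only a morphism on $\mc U$ and is not flat; the torsion vanishing $H^{1}=0$ on the curve fibres is precisely what makes the base change unconditional here.

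Third, I would apply base change along the cartesian square whose top arrow is $\mathrm{id}\times\Phi:X\times_{S}\mc U\to X\times_{S}\mc Q(E,d)$ and whose vertical maps are $\pi_{2}$ and $p_{\mc Q}$. The construction of $\Phi$ via the universal property of $\mc Q(E,d)$ says that $(\mathrm{id}\times\Phi)^{*}$ of the universal quotient $\pi_{\mc Q}^{*}E\to\mc B$ is the surjection $q|_{X\times_{S}\mc U}$. Flatness of $\mc B$ makes the pullback of the kernel agree with the kernel of the pullback, so
\[
(\mathrm{id}\times\Phi)^{*}\mc K\cong\mc F(E,d)|_{X\times_{S}\mc U},\qquad (\mathrm{id}\times\Phi)^{*}\mc B\cong\bigoplus_{i=1}^{d}\pi_{2,i}^{*}\mc O(1)|_{\Delta_{i}}\,.
\]

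Finally, I would assemble the pieces. Base change gives $\Phi^{*}\mc T_{\mc Q(E,d)/S}\cong(\pi_{2})_{*}(\mathrm{id}\times\Phi)^{*}\ms Hom(\mc K,\mc B)$; local freeness of $\mc K$ lets $\ms Hom$ commute with the pullback $\mathrm{id}\times\Phi$; and the identifications above, together with additivity of $\ms Hom$ in its second variable, yield $(\mathrm{id}\times\Phi)^{*}\ms Hom(\mc K,\mc B)\cong\bigoplus_{i=1}^{d}\ms Hom(\mc F(E,d),\pi_{2,i}^{*}\mc O(1)|_{\Delta_{i}})$ over $X\times_{S}\mc U$. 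Pushing forward along $\pi_{2}$ and commuting it past the finite direct sum gives the claimed isomorphism after restriction to $\mc U$.
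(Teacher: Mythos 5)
Your proof is correct and follows essentially the same route as the paper: identify $\mc T_{\mc Q(E,d)/S}$ as $(p_{\mc Q})_*\ms Hom(\mc K,\mc B)$ via the deformation theory of Quot schemes, base-change along the cartesian square involving $\mathrm{id}\times\Phi$, and identify the pullbacks of the universal kernel and quotient with $\mc F(E,d)$ and $\bigoplus_i\pi^*_{2,i}\mc O(1)|_{\Delta_i}$. The only differences are in how two sub-steps are justified — the paper invokes Grauert's theorem (constant fibre dimension) where you use $H^1$-vanishing on fibres plus cohomology and base change, and it identifies the pulled-back kernel with $\mc F(E,d)$ via a surjection of rank-$r$ vector bundles being an isomorphism where you use flatness of $\mc B$ to preserve exactness of the kernel — and both justifications are standard and valid.
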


\begin{proof}
\if
Let us denote the projection $X\times_{S} \mathcal{Q}(E,d) \rightarrow X$ by $\pi_{X}$ and the projection $X\times_{S} \mathcal{Q}(E,d) \rightarrow \mathcal{Q}(E,d)$ by $p_{\mathcal{Q}}$ i.e. we have the following diagram:
\[
\begin{tikzcd}
X\times_{S} \mathcal{Q}(E,d) \arrow[r, "\pi_{X}"] \arrow[d,"p_{\mathcal{Q}}"] & X \arrow[d, "p_{S}"] \\
\mathcal{Q}(E,d) \arrow[r ,"\pi_{S}"] & S
\end{tikzcd}
\]
\fi
Over $X\times_{S} \mathcal{Q}(E,d)$, we have the universal exact sequence:
$$0 \to \mc A(E,d) \to \pi^*_X E\to \mc B(E,d)\to 0 \,.$$
Then it is known that  $\mathcal{A}(E,d)$ is a vector bundle of rank $r$ \cite[Lemma 2.2]{G18} and by \cite[Proposition 2.2.7]{HL} we have
$$\mathcal{T}_{\mathcal{Q}(E,d)/S}=(p_{\mathcal{Q}})_{*}\mathcal{H}om(\mathcal{A}(E,d),\mathcal{B}(E,d))\,.$$
Consider the following diagram:
\[
\begin{tikzcd}
X\times_{S} \mathcal{U} \arrow[r,"id_{X}\times \Phi"] \arrow[d,"\pi_{1}"] & X\times_{S} \mathcal{Q}(E,d) \arrow[d,"p_{\mathcal{Q}}"] \\
\mathcal{U} \arrow[r,"\Phi"] & \mathcal{Q}(E,d)
\end{tikzcd}
\]
By Grauert's theorem \cite[Corollary 12.9]{Har}, We get that
\begin{align*}
\Phi^{*}\mathcal{T}_{\mathcal{Q}(E,d)/S} = & (\Phi)^{*}(p_{\mathcal{Q}})_{*}\mathcal{H}om(\mathcal{A}(E,d),\mathcal{B}(E,d))                        \\
                                         \cong  & (\pi_{1})_{*}(id_{X}\times \Phi)^{*}\mathcal{H}om(\mathcal{A}(E,d),\mathcal{B}(E,d)) \,.
\end{align*}
Since $\mathcal{A}(E,d)$ is a vector bundle, we have
\begin{align*}
& (id_{X}\times_{S}\Phi)^{*}\ms{H}om(\mathcal{A}(E,d),\mathcal{B}(E,d))     \\
& =\ms{H}om((id_{X}\times_{S}\Phi)^{*}\mathcal{A}(E,d),(id_{X}\times_{S}\Phi)^{*}\mathcal{B}(E,d))\,.                                                                \\
\end{align*}
By the definition of the map $\Phi$ we have 
$$(id_{X}\times_{S}\Phi)^{*}\mathcal{B}(E,d)\cong (\bigoplus\limits_{i=1}^{d} \pi^{*}_{2,i}\mathcal{O}(1)|_{\Delta_{i}})|_{X\times_{S}\mathcal{U}}\,.$$ 
Also 
$$\Phi^{*}\mathcal{A}(E,d)\cong \mathcal{F}(E,d)|_{X\times_{S} \mathcal{U}}\,.$$ 
since by \cite[Lemma 2.2]{G18} $\mathcal{F}(E,d)|_{X\times_{S}\mathcal{U}}$ is again a vector bundle of rank $r$ and there exists a surjection $\Phi^{*}\mathcal{A}(E,d)\twoheadrightarrow \mathcal{F}(E,d)|_{X\times_{S} \mathcal{U}}$.
This completes the proof of the lemma.
\if 
 Therefore, 
 $$\Phi^{*}\mathcal{T}_{\mathcal{Q}(E,d)/S}=\mathcal{H}om(\mathcal{F}(E,d),\bigoplus\limits_{i=1}^{d} \pi^{*}_{2,i}\mathcal{O}(1)|_{\Delta_{i}} )|_{X\times_{S} \mathcal{U}}\,$$
\fi
\end{proof}

\begin{lemma}\label{hom of certain sheaves}
 For $1\leq i,j\leq d$ and $i\neq j$ we have
$$\mathscr{H}om(\pi^{*}_{2,i}\mathcal{O}(1)|_{\Delta_{i}}, \pi^{*}_{2,j}\mathcal{O}(1)|_{\Delta_{j}})=0\,.$$
\end{lemma}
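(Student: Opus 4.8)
The plan is to identify both sheaves as line bundles pushed forward from the graph-type subschemes $\Delta_i,\Delta_j\subset X\times_S\mc Z$ and then run a support argument. First I would record, using Notation \ref{notation}(8), that $\Delta_i$ is the locus $\{\pi_1=p\circ\pi_{2,i}\}$, i.e. the graph of the morphism $p\circ p_i:\mc Z\to X$ over $S$; hence $\pi_2$ restricts to an isomorphism $\Delta_i\cong\mc Z$, and likewise $\Delta_j\cong\mc Z$. Since $X$ and $S$ are smooth projective and $\mathbb{P}(E)\to S$ is smooth projective with geometrically integral fibres, the fibre product $\mc Z$ is a smooth integral variety over $k$, so $\Delta_i$ and $\Delta_j$ are integral subschemes of the same dimension. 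They are distinct because $p\circ p_i\neq p\circ p_j$ for $i\neq j$, so neither contains the other and $\Delta_i\cap\Delta_j$ is a proper closed subscheme of $\Delta_j$ (its image under $\pi_2$ being $\Delta_{i,j,X}$). Writing $\iota_i,\iota_j$ for the closed immersions, the two sheaves are $\mathcal{L}_i=(\iota_i)_*\!\bigl(\pi^{*}_{2,i}\mathcal{O}(1)|_{\Delta_i}\bigr)$ and $\mathcal{L}_j=(\iota_j)_*\!\bigl(\pi^{*}_{2,j}\mathcal{O}(1)|_{\Delta_j}\bigr)$, each the pushforward of a line bundle from an integral subscheme.

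Next I would show any homomorphism $\phi:\mathcal{L}_i\to\mathcal{L}_j$ vanishes. The image $\mathrm{im}(\phi)$ is a quotient of $\mathcal{L}_i$, hence supported on $\Delta_i$; it is also a subsheaf of $\mathcal{L}_j$, hence annihilated by $\mathcal{I}_{\Delta_j}$ and therefore of the form $(\iota_j)_*\mathcal{S}$ for a subsheaf $\mathcal{S}\subseteq\pi^{*}_{2,j}\mathcal{O}(1)|_{\Delta_j}$ on $\Delta_j$. Combining the two support conditions, $\mathcal{S}$ is supported on $\Delta_i\cap\Delta_j$, a proper closed subset of the integral scheme $\Delta_j$. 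But $\mathcal{S}$ is a subsheaf of a line bundle on $\Delta_j$, hence torsion-free, and a nonzero torsion-free coherent sheaf on an integral scheme has full support; thus $\mathcal{S}=0$ and $\phi=0$.

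To upgrade this to the sheaf statement $\mathscr{H}om(\mathcal{L}_i,\mathcal{L}_j)=0$ I would check vanishing of stalks. Away from $\Delta_i\cap\Delta_j$ one of the two stalks is zero. At a point $y\in\Delta_i\cap\Delta_j$, set $R=\mathcal{O}_{Y,y}$ with $I=\mathcal{I}_{\Delta_i,y}$, $J=\mathcal{I}_{\Delta_j,y}$; after trivializing the line bundles the stalk is $\mathrm{Hom}_R(R/I,R/J)=(J:I)/J$. Since $\Delta_j$ is integral, $J$ is prime, and since $\Delta_j\not\subseteq\Delta_i$ there is $f\in I$ with $f\notin J$; then $af\in J$ forces $a\in J$, so $(J:I)=J$ and the stalk vanishes.

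I expect the only point needing genuine care to be the geometry of the $\Delta_i$: verifying that each is integral (via the isomorphism with $\mc Z$) and that $\Delta_i\neq\Delta_j$ with neither containing the other, so that $\Delta_i\cap\Delta_j$ is genuinely a proper closed subset of $\Delta_j$. Everything else reduces to the standard fact that a torsion-free sheaf supported on a proper closed subset of an integral scheme is zero, equivalently the colon-ideal identity $(J:I)=J$ for a prime $J$ not containing $I$.
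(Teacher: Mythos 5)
Your proof is correct and is essentially the paper's argument: the paper packages your image/support analysis into a single adjunction step, $\mathscr{H}om(\pi^{*}_{2,i}\mathcal{O}(1)|_{\Delta_{i}},\pi^{*}_{2,j}\mathcal{O}(1)|_{\Delta_{j}})=\mathscr{H}om(\pi^{*}_{2,i}\mathcal{O}(1)|_{\Delta_i\cap\Delta_j},\pi^{*}_{2,j}\mathcal{O}(1)|_{\Delta_{j}})$, and then concludes exactly as you do, since a nonzero map would give a torsion subsheaf of a line bundle on the integral scheme $\Delta_j$ supported on the proper closed subset $\Delta_i\cap\Delta_j$. Your additional verifications (integrality of the $\Delta_i$ via $\pi_2$, and the colon-ideal stalk computation) are explicit unwindings of what the paper leaves implicit.
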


\begin{proof}
By adjunction, we have 
$${\ms Hom}(\pi^{*}_{2,i}\mathcal{O}(1)|_{\Delta_{i}}, \pi^{*}_{2,j}\mathcal{O}(1)|_{\Delta_{j}})={\ms Hom}(\pi^{*}_{2,i}\mathcal{O}(1)|_{\Delta_i\cap \Delta_j},\pi^{*}_{2,j}\mathcal{O}(1)|_{\Delta_{j}} )$$
Since $\Delta_{j}$ is an integral scheme and $\Delta_{i}\cap \Delta_{j}$ is a proper subset of $\Delta_{j}$, the later term in the above expression is zero.
\end{proof}
\begin{comment}
Let us denote the universal exact sequence on $X\times_S \mc Q(E,d)$ by 
\begin{equation}
0 \to \mc A(E,d) \to \pi^*_1E\to \mc B(E,d)\to 0 
\end{equation}
Then 
 such that 
$$({\rm id}_X\times \Phi)^*\pi^*_1E\to \mc B(E,d)\to 0=q|_{X\times_S \mc U}\,.$$

Let us define $\mc F(E,d):={\rm ker}~q$.
\end{comment}
\begin{lemma}\label{comparision with global section of the tangent bundle of projective bundle} 
For any $1\leq j \leq d$ we have
$$H^{0}(X\times_{S} \mathcal{U},\mathscr{H}om ((\pi_{1}\times \pi_{2,j})^{*}\mathcal{F}(E,1),\pi_{2,j}^{*}\mathcal{O}(1)|_{\Delta_{j}}))= H^{0}(\mathbb{P}(E), \mathcal{T}_{\mathbb{P}(E)/S})\,.$$
\end{lemma}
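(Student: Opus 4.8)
The plan is to push the entire computation onto the divisor $\Delta_j$, identify the resulting sheaf with the pullback of $\mathcal{T}_{\mathbb{P}(E)/S}$ along the $j$-th projection $p_j:\mathcal{Z}\to\mathbb{P}(E)$, and then transport global sections back to $\mathbb{P}(E)$ by a Hartogs extension followed by a projection-formula argument. First I would exploit that the target $\pi_{2,j}^{*}\mathcal{O}(1)|_{\Delta_j}$ is scheme-theoretically supported on $\Delta_j$. Writing $i_j:\Delta_j\hookrightarrow X\times_S\mathcal{Z}$ for the inclusion, every homomorphism into a sheaf pushed forward from $\Delta_j$ factors through restriction to $\Delta_j$, so that
$$\mathscr{H}om\big((\pi_1\times\pi_{2,j})^{*}\mathcal{F}(E,1),\pi_{2,j}^{*}\mathcal{O}(1)|_{\Delta_j}\big)\cong (i_j)_{*}\mathscr{H}om_{\Delta_j}\big(i_j^{*}(\pi_1\times\pi_{2,j})^{*}\mathcal{F}(E,1),\,\pi_{2,j}^{*}\mathcal{O}(1)|_{\Delta_j}\big).$$
The advantage of this intrinsic form is that it only requires the ordinary functorial pullback of the locally free sheaf $\mathcal{F}(E,1)$, never a pullback of the non-locally-free $\mathcal{O}(1)|_{\Delta}$.

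Next I would identify the sheaf on $\Delta_j$. The projection $\pi_2$ restricts to an isomorphism $\Delta_j\xrightarrow{\sim}\mathcal{Z}$, carrying $\pi_{2,j}^{*}\mathcal{O}(1)|_{\Delta_j}$ to $p_j^{*}\mathcal{O}(1)$. Since $\pi_1=p\circ\pi_{2,j}$ on $\Delta_j$, the map $(\pi_1\times\pi_{2,j})\circ i_j$ factors through the diagonal $\Delta\subset X\times_S\mathbb{P}(E)$ of the $d=1$ picture, and the induced map $\Delta_j\to\Delta$ becomes $p_j$ under $\Delta_j\cong\mathcal{Z}$ and $\Delta\cong\mathbb{P}(E)$. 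By functoriality of pullback, $i_j^{*}(\pi_1\times\pi_{2,j})^{*}\mathcal{F}(E,1)\cong p_j^{*}\big(\mathcal{F}(E,1)|_{\Delta}\big)$, and as $\mathcal{F}(E,1)|_{\Delta}$ is locally free this yields $p_j^{*}\mathscr{H}om_{\mathbb{P}(E)}\big(\mathcal{F}(E,1)|_{\Delta},\mathcal{O}(1)\big)$. The crucial input is that $\mathscr{H}om_{\mathbb{P}(E)}(\mathcal{F}(E,1)|_{\Delta},\mathcal{O}(1))\cong\mathcal{T}_{\mathbb{P}(E)/S}$; I would obtain this as the $d=1$ instance of Lemma \ref{pullback of the tangent bundle}, where $\mathcal{Z}=\mathbb{P}(E)$, the map $\Phi$ is the identity on $\mathcal{Q}(E,1)=\mathbb{P}(E)$, and the formula $\mathcal{T}_{\mathcal{Q}(E,1)/S}=(p_{\mathcal{Q}})_{*}\mathscr{H}om(\mathcal{A}(E,1),\mathcal{B}(E,1))$ reads, after pushing forward along $\Delta\cong\mathbb{P}(E)$, exactly as the claimed identity. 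Combining, the original Hom-sheaf is $(i_j)_{*}p_j^{*}\mathcal{T}_{\mathbb{P}(E)/S}$.

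It then remains to take global sections. As $(i_j)_{*}$ preserves $H^{0}$ and $\Delta_j\cap(X\times_S\mathcal{U})\cong\mathcal{U}$, I get $H^{0}(X\times_S\mathcal{U},(i_j)_{*}p_j^{*}\mathcal{T}_{\mathbb{P}(E)/S})\cong H^{0}(\mathcal{U},p_j^{*}\mathcal{T}_{\mathbb{P}(E)/S})$. A dimension count shows $\mathcal{Z}\setminus\mathcal{U}$ has codimension at least $2$: the loci $\Delta_{i,j}$ have codimension $r\geq 2$ and the loci $\Delta_{i,j,k,X}$ have codimension $2$. Since $\mathcal{Z}$ is smooth and $p_j^{*}\mathcal{T}_{\mathbb{P}(E)/S}$ is locally free, Hartogs' theorem extends sections to give $H^{0}(\mathcal{U},p_j^{*}\mathcal{T}_{\mathbb{P}(E)/S})=H^{0}(\mathcal{Z},p_j^{*}\mathcal{T}_{\mathbb{P}(E)/S})$. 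Writing $\mathcal{Z}=\mathbb{P}(E)\times_S W$ with $W$ the product of the remaining $d-1$ factors, flat base change over $S$ gives $(p_j)_{*}\mathcal{O}_{\mathcal{Z}}\cong\mathcal{O}_{\mathbb{P}(E)}$ (using that $W\to S$ is proper and flat with structure-sheaf pushforward $\mathcal{O}_S$), so the projection formula yields $H^{0}(\mathcal{Z},p_j^{*}\mathcal{T}_{\mathbb{P}(E)/S})=H^{0}(\mathbb{P}(E),\mathcal{T}_{\mathbb{P}(E)/S})$, completing the proof.

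The main obstacle I anticipate is the middle identification of $i_j^{*}(\pi_1\times\pi_{2,j})^{*}\mathcal{F}(E,1)$ with $p_j^{*}(\mathcal{F}(E,1)|_{\Delta})$ and of the latter Hom with $\mathcal{T}_{\mathbb{P}(E)/S}$. The delicate point is that a naive computation pulling back $\mathcal{O}(1)|_{\Delta}$ directly would invite a $\mathcal{T}or_1$ correction coming from the nontransverse way $\Delta_j$ sits inside $X\times_S\mathcal{Z}$; organizing the reduction through the closed-immersion adjunction above, so that only the locally free $\mathcal{F}(E,1)$ is ever pulled back, is what avoids this. Reducing the core identity to the $d=1$ case of Lemma \ref{pullback of the tangent bundle} then circumvents having to recompute the extension structure of $\mathcal{F}(E,1)|_{\Delta}$ by hand.
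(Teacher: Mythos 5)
Your proposal is correct and follows essentially the same route as the paper's proof: restrict everything to $\Delta_j\cong\mathcal{Z}$ via $\pi_2$ (the paper performs your closed-immersion adjunction step implicitly by ``identifying $\Delta_j$ with $\mathcal{Z}$''), recognize the Hom sheaf as $p_j^{*}\bigl(\mathcal{F}(E,1)^{\vee}|_{\Delta_1}\otimes\mathcal{O}(1)\bigr)=p_j^{*}\mathcal{T}_{\mathbb{P}(E)/S}$, extend sections across the codimension-$\geq 2$ complement $\mathcal{Z}\setminus\mathcal{U}$, and conclude with the projection formula for $p_j$. The only cosmetic difference is that you justify the identification $\mathcal{F}(E,1)^{\vee}|_{\Delta}\otimes\mathcal{O}(1)\cong\mathcal{T}_{\mathbb{P}(E)/S}$ as the $d=1$ instance of Lemma \ref{pullback of the tangent bundle}, whereas the paper simply asserts it.
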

\begin{proof}
The projection $\pi_2$ induces isomorphism 
$\Delta_{j}\xrightarrow{\sim} \mc Z$. 
Identifying $\Delta_j$ with $\mathcal{Z}$ we have
\begin{align*}
& H^{0}(X\times_{S} \mathcal{U}, \mathcal{H}om((\pi_{1}\times \pi_{2,j})^{*}\mathcal{F}(E,1),\pi^{*}_{2,j}\mathcal{O}(1)|_{\Delta_{j}}))\\
& =H^{0}(\mathcal{U},\mathcal{H}om (p^{*}_{j}\mathcal{F}(E,1)|_{\Delta_{1}},p^{*}_{j}\mathcal{O}(1) ))                                \\
& =H^{0}(\mathcal{U},p^{*}_{j}(\mathcal{F}(E,1)^{\vee}|_{\Delta_{1}}\otimes \mathcal{O}(1)))\,.
\end{align*}
Since $\mathcal{F}(E,1)$ is vector bundle 
over $\mathcal{Z}$ and codimension of 
$\mathcal{Z} \setminus \mathcal{U}\geq 2$ we have
$$H^{0}(\mathcal{U},p^{*}_{j}(\mathcal{F}(E,1)^{\vee}|_{\Delta_{1}}\otimes \mathcal{O}(1)))=H^{0}(\mathcal{Z},p^{*}_{j}(\mathcal{F}(E,1)^{\vee}|_{\Delta_{1}}\otimes \mathcal{O}(1)))\,.$$
Using projection formula for the morphism $p_{j}$ we get that
$$H^{0}(\mathcal{U},p^{*}_{j}(\mathcal{F}(E,1)^{\vee}|_{\Delta_{1}}\otimes \mathcal{O}(1)))=H^{0}(\mathbb{P}(E),(\mathcal{F}(E,1)^{\vee}|_{\Delta_{1}}\otimes \mathcal{O}(1)))\,.$$
Now over $\mathbb{P}(E)$ we have 
$$\mathcal{F}(E,1)^{\vee}|_{\Delta_{1}}\otimes \mathcal{O}(1)\cong \mathcal{T}_{\mathbb{P}(E)/S}\,.$$ 
This completes the proof of the lemma.
\end{proof}

\begin{lemma}\label{ext sheaves simplified}
For $1\leq i,j\leq d,~i\neq j$,
we have an isomorphism of sheaves:
$$\mathscr{E}xt^{1}(\pi^{*}_{2,i}\mathcal{O}(1)|_{\Delta_{i}}, \pi^{*}_{2,j}\mathcal{O}(1)|_{\Delta_{j}})\cong \pi^{*}_{2,i}\mathcal{O}(-1) \otimes \pi^{*}_{2,j}\mathcal{O}(1))\otimes \pi^{*}_{1}\mathcal{T}_{X/S}|_{\Delta_{i}\cap \Delta_{j}}\,.$$
\end{lemma}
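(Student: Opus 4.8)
The plan is to compute the $\mathscr{E}xt$ sheaves on $X\times_S\mathcal{Z}$ by means of a short locally free resolution of the first argument, exploiting the fact that $\Delta_i$ is an effective Cartier divisor. Indeed, $\Delta_i$ is the graph of the morphism $p\circ p_i:\mathcal{Z}\to X$ over $S$, hence a divisor in $X\times_S\mathcal{Z}$; let $s_i$ be its canonical defining section, i.e.\ the tautological section of $\mathcal{O}(\Delta_i)$. Writing $\mathcal{L}_i:=\pi^*_{2,i}\mathcal{O}(1)$ for the line bundle on $X\times_S\mathcal{Z}$ restricting to the first argument along $\Delta_i$, I would use the two-term locally free resolution
\[
0\to \mathcal{L}_i(-\Delta_i)\xrightarrow{\,\cdot s_i\,}\mathcal{L}_i\to \pi^*_{2,i}\mathcal{O}(1)|_{\Delta_i}\to 0\,.
\]
Since this resolution has length one, $\mathscr{E}xt^k$ vanishes for $k\ge 2$, and $\mathscr{E}xt^0,\mathscr{E}xt^1$ are computed as the kernel and cokernel of the dualized map.

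Applying $\mathscr{H}om(-,\pi^*_{2,j}\mathcal{O}(1)|_{\Delta_j})$ to the resolution yields the two-term complex
\[
\mathcal{L}_i^{-1}\otimes \pi^*_{2,j}\mathcal{O}(1)|_{\Delta_j}\xrightarrow{\,\cdot s_i\,}\mathcal{L}_i^{-1}(\Delta_i)\otimes \pi^*_{2,j}\mathcal{O}(1)|_{\Delta_j}\,,
\]
whose degree $0$ and $1$ cohomology give the desired $\mathscr{E}xt$ sheaves. Both terms are supported on the integral divisor $\Delta_j$, and $s_i$ does not vanish identically there because $\Delta_j\not\subset\Delta_i$; thus multiplication by $s_i$ is injective, which both recovers the vanishing $\mathscr{E}xt^0=\mathscr{H}om=0$ from Lemma \ref{hom of certain sheaves} and identifies $\mathscr{E}xt^1$ with the cokernel of $\cdot s_i$.

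To evaluate this cokernel I would restrict to $\Delta_j$ and note that $s_i|_{\Delta_j}$ is a regular section cutting out the Cartier divisor $\Delta_i\cap\Delta_j\subset\Delta_j$. The cokernel of multiplication by a section vanishing along $\Delta_i\cap\Delta_j$ is then the restriction of the target to that locus, so
\[
\mathscr{E}xt^1\cong \bigl(\mathcal{L}_i^{-1}(\Delta_i)\otimes \pi^*_{2,j}\mathcal{O}(1)\bigr)\big|_{\Delta_i\cap\Delta_j}=\pi^*_{2,i}\mathcal{O}(-1)\otimes \pi^*_{2,j}\mathcal{O}(1)\otimes \mathcal{O}(\Delta_i)\big|_{\Delta_i\cap\Delta_j}\,,
\]
and it remains only to identify the twist by $\mathcal{O}(\Delta_i)$, which is where the geometry enters.

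The key remaining step, and the one I expect to require the most care, is the identification of the normal bundle $\mathcal{O}(\Delta_i)|_{\Delta_i}=N_{\Delta_i/X\times_S\mathcal{Z}}$. Since $\Delta_i$ is the graph of $p\circ p_i:\mathcal{Z}\to X$ over $S$ and $X\to S$ is a smooth family of curves, so that $\mathcal{T}_{X/S}$ is a line bundle, the normal bundle of this graph is the pullback of the relative tangent bundle, giving $\mathcal{O}(\Delta_i)|_{\Delta_i}\cong \pi_1^*\mathcal{T}_{X/S}|_{\Delta_i}$. This can be verified by exhibiting $\Delta_i$ as the scheme-theoretic preimage of the relative diagonal $\Delta_{X/S}\subset X\times_S X$ under $(\pi_1,p\circ\pi_{2,i})$ and using that $N_{\Delta_{X/S}}\cong \mathcal{T}_{X/S}$. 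Restricting this isomorphism to $\Delta_i\cap\Delta_j$ and substituting into the expression above yields the claimed formula. The only subtlety to confirm along the way is that $\Delta_i\cap\Delta_j$ is a Cartier divisor in $\Delta_j$, equivalently that $s_i|_{\Delta_j}$ is a non-zero-divisor, which follows from the integrality of $\Delta_j$ together with $\Delta_j\not\subset\Delta_i$.
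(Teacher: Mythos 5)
Your proof is correct and takes essentially the same approach as the paper: the paper likewise resolves $\mathcal{O}_{\Delta_i}$ by the divisor sequence $0 \to \mathcal{O}(-\Delta_{i}) \to \mathcal{O}_{X\times_{S}\mathcal{Z}} \to \mathcal{O}_{\Delta_{i}} \to 0$, applies $\mathscr{H}om(-,\mathcal{O}_{\Delta_{j}})$, identifies $\mathscr{E}xt^{1}(\mathcal{O}_{\Delta_{i}},\mathcal{O}_{\Delta_{j}})$ as the cokernel of $\mathcal{O}_{\Delta_{j}} \to \mathcal{O}(\Delta_{i})|_{\Delta_{j}}$, namely $\mathcal{O}(\Delta_i)|_{\Delta_i\cap\Delta_j}\cong \pi_{1}^{*}\mathcal{T}_{X/S}|_{\Delta_{i}\cap \Delta_{j}}$, and then twists by the line bundles. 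Your version only differs in carrying the twist by $\pi^{*}_{2,i}\mathcal{O}(1)$ along from the start and in spelling out the injectivity of multiplication by $s_i$ and the graph/normal-bundle identification that the paper leaves implicit.
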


\begin{proof}
Consider the exact sequence:
\[
\begin{tikzcd}
0 \arrow[r] & \mathcal{O}(-\Delta_{i}) \arrow[r] & \mathcal{O}_{X\times_{S}\mathcal{Z}} \arrow[r] & \mathcal{O}_{\Delta_{i}} \arrow[r] & 0
\end{tikzcd}
\]
Applying $\mathscr{H}om(\text{  }, \mathcal{O}_{\Delta_{j}})$ to the above exact sequence, we get:
\[
\begin{tikzcd}
0 \arrow[r] & \mathcal{O}_{\Delta_{j}} \arrow[r] & \mathcal{O}(\Delta_{i})|_{\Delta_{j}} \arrow[r] & \mathscr{E}xt^{1}(\mathcal{O}_{\Delta_{i}},\mathcal{O}_{\Delta_{j}}) \arrow[r] & 0
\end{tikzcd}
\]
Therefore 
$$\mathscr{E}xt^{1}(\mathcal{O}_{\Delta_{i}},\mathcal{O}_{\Delta_{j}})\cong \pi^{*}_{1}\mathcal{T}_{X/S}|_{\Delta_{i}\cap \Delta_{j}}\,. $$
and the statement follows immediately from this.
\end{proof}
The following corollary follows immediately from Lemma \ref{ext sheaves simplified}.
\begin{corollary}\label{pushforward of ext sheaves simplified}
We have an isomorphism of sheaves on $\mc Z$
$$(\pi_2)_*\mathscr{E}xt^{1}(\pi^{*}_{2,i}\mathcal{O}(1)|_{\Delta_{i}}, \pi^{*}_{2,j}\mathcal{O}(1)|_{\Delta_{j}}) \cong p^{*}_{i}\mathcal{O}(-1)\otimes p^{*}_{j}\mathcal{O}(1)\otimes (p\circ p_{i})^{*}\mathcal{T}_{X/S}\vert_{\Delta_{i,j,X}}\,.$$
\end{corollary}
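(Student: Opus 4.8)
The plan is to push forward the pointwise description from Lemma \ref{ext sheaves simplified} along $\pi_2$, exploiting that the $\mathscr{E}xt^1$-sheaf computed there is supported on $\Delta_i\cap\Delta_j$ and that $\pi_2$ collapses this locus isomorphically onto $\Delta_{i,j,X}$. So the whole computation reduces to transport of structure under an explicit isomorphism.

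First I would record the geometry of the relevant loci. By its definition in Notation \ref{notation}, $\Delta_i$ is the graph of the morphism $\mc Z\to X$ given by $z\mapsto p\circ p_i(z)$; consequently $\pi_2$ restricts to an isomorphism $\Delta_i\xrightarrow{\ \sim\ }\mc Z$ (this is the identification already used in Lemma \ref{comparision with global section of the tangent bundle of projective bundle}). Pulling back the equation $p\circ p_i=p\circ p_j$ that defines $\Delta_{i,j,X}$ under this graph isomorphism cuts out exactly $\Delta_i\cap\Delta_j$ inside $\Delta_i$, so $\pi_2$ restricts to a scheme-theoretic isomorphism
$$\pi_2|_{\Delta_i\cap\Delta_j}:\Delta_i\cap\Delta_j\xrightarrow{\ \sim\ }\Delta_{i,j,X}\,.$$

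Next I would use that the sheaf $\mathscr{E}xt^1(\pi^*_{2,i}\mathcal{O}(1)|_{\Delta_i},\pi^*_{2,j}\mathcal{O}(1)|_{\Delta_j})$ is supported on $\Delta_i\cap\Delta_j$. Writing $\iota\colon\Delta_i\cap\Delta_j\hookrightarrow X\times_S\mc Z$ for the inclusion and $\mathscr{G}=\iota_*\mathscr{H}$ for such a sheaf, one has $(\pi_2)_*\mathscr{G}=(\pi_2\circ\iota)_*\mathscr{H}$, and since $\pi_2\circ\iota$ is the isomorphism onto $\Delta_{i,j,X}$ established above, the pushforward is simply $\mathscr{H}$ transported to $\Delta_{i,j,X}\subset\mc Z$. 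It then remains to transport the three tensor factors of Lemma \ref{ext sheaves simplified}. Using $\pi_{2,i}=p_i\circ\pi_2$, the factor $\pi^*_{2,i}\mathcal{O}(-1)$ becomes $p^*_i\mathcal{O}(-1)|_{\Delta_{i,j,X}}$ and $\pi^*_{2,j}\mathcal{O}(1)$ becomes $p^*_j\mathcal{O}(1)|_{\Delta_{i,j,X}}$, while on $\Delta_i$ the relation $\pi_1=p\circ\pi_{2,i}=(p\circ p_i)\circ\pi_2$ turns $\pi^*_1\mathcal{T}_{X/S}$ into $(p\circ p_i)^*\mathcal{T}_{X/S}|_{\Delta_{i,j,X}}$. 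Assembling these gives precisely the claimed formula.

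The only genuinely delicate point is the scheme-theoretic—rather than merely set-theoretic—identification $\Delta_i\cap\Delta_j\cong\Delta_{i,j,X}$, since the assertion is an isomorphism of sheaves and not just of their supports; once the two closed subschemes are matched via the graph isomorphism for $\Delta_i$, the remaining identifications of the line-bundle and tangent factors are formal consequences of the relations $\pi_{2,i}=p_i\circ\pi_2$ and $\pi_1|_{\Delta_i}=(p\circ p_i)\circ\pi_2$ among the projections. This is exactly why the corollary follows immediately from Lemma \ref{ext sheaves simplified}.
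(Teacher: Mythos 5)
Your proposal is correct and is essentially the paper's argument: the paper simply states that the corollary "follows immediately" from Lemma \ref{ext sheaves simplified}, and what you have written out — the sheaf is supported on $\Delta_i\cap\Delta_j$, the projection $\pi_2$ restricts (via the graph description of $\Delta_i$) to a scheme-theoretic isomorphism $\Delta_i\cap\Delta_j\xrightarrow{\sim}\Delta_{i,j,X}$, and the three tensor factors transport using $\pi_{2,i}=p_i\circ\pi_2$ and $\pi_1|_{\Delta_i}=(p\circ p_i)\circ\pi_2$ — is precisely the routine verification being left implicit. Your attention to the scheme-theoretic (not just set-theoretic) identification is the right point to flag, and it checks out.
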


\begin{lemma}\label{codimension of various closed sets} 
Fix $1\leq i,j\leq d$ with $i\neq j$. Then for $1\leq k,l,m\leq d$ with $k, l,m$ distinct we have  
\begin{enumerate}
\item  ${\rm codim}(\Delta_{k,l}\cap \Delta_{i,j,X},\Delta_{i,j,X})\geq 2$ if $\{k,l\}\neq \{i,j\}$.
\item ${\rm codim}(\Delta_{i,j}\cap ,\Delta_{i,j,X},\Delta_{i,j,X})=r$.
\item If $\{i,j\}\nsubseteq \{k,l,m\}$ then ${\rm codim}(\Delta_{k,l,m,X}\cap \Delta_{i,j,X},\Delta_{i,j,X})\geq 2$.
\item ${\rm codim}(\Delta_{i,j,k,X},\Delta_{i,j,X})=1$.
\end{enumerate}
\end{lemma}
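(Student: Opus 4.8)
The plan is to prove all four assertions by a single fibrewise dimension count. Since $p_S\colon X\to S$ is smooth of relative dimension one, $\mathbb{P}(E)\to S$ is smooth of relative dimension $r$, and hence $\mathcal{Z}\to S$ is smooth with every fibre $\mathcal{Z}_s=\mathbb{P}(E_s)^d$ irreducible of dimension $dr$. Each of the loci $\Delta_{i,j}$, $\Delta_{i,j,X}$, $\Delta_{i,j,k,X}$ is defined by equations compatible with the projection to $S$ and restricts on $\mathcal{Z}_s$ to the analogous locus in $\mathbb{P}(E_s)^d$; each of them, and each intersection appearing below, is irreducible and equidimensional over $S$. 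Hence the codimension of any one inside another agrees with the codimension computed in the single fibre $\mathbb{P}(E_s)^d$, and I would work there throughout, writing a point as $(v_1,\dots,v_d)$ and setting $x_a:=p(v_a)\in X_s$.

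The count rests on two elementary rules. First, any system of equalities among the base points $x_a$ cuts out the preimage, under the smooth projection $\mathbb{P}(E_s)^d\to X_s^{\,d}$, of the corresponding multidiagonal of the curve $X_s$; choosing one representative index per block exhibits this preimage as a product of one copy of $\mathbb{P}(E_s)$ for each block of the generated partition together with one copy of $\mathbb{P}^{r-1}$ for each remaining index, so its codimension equals $d$ minus the number of blocks. In particular $\mathrm{codim}(\Delta_{i,j,X},\mathcal{Z})=1$. Second, a genuine diagonal condition $v_i=v_j$ imposes the base equality $x_i=x_j$ and, over the locus where it holds, forces the two fibre directions to coincide in the common $\mathbb{P}^{r-1}$; this adds $r-1$ to the codimension, so that $\mathrm{codim}(\Delta_{i,j},\mathcal{Z})=1+(r-1)=r$.

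With these rules each of (1)--(4) is bookkeeping over which indices get identified. For (4), $\Delta_{i,j,k,X}$ fuses $\{i,j,k\}$ into one block, so it has codimension $2$ in $\mathcal{Z}$ and thus codimension $1$ relative to $\Delta_{i,j,X}$. For (2), since $\Delta_{i,j}\subseteq\Delta_{i,j,X}$ the intersection is $\Delta_{i,j}$, whose codimension is the relative dimension $r$ of $\mathbb{P}(E)/S$ carried by the fibre diagonal. For (1) I would split according to $|\{k,l\}\cap\{i,j\}|$: when the pairs are disjoint the partition gains two merges and the fibre condition adds $r-1$, while when they share an index the merges combine into a single block of size three; in both cases the codimension in $\mathcal{Z}$ is $r+1$, hence $r\geq 2$ relative to $\Delta_{i,j,X}$. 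For (3), the hypothesis $\{i,j\}\not\subseteq\{k,l,m\}$ guarantees that adjoining $x_k=x_l=x_m$ to the partition $\{i,j\}$ lowers the number of blocks by two, so the codimension is $2$ relative to $\Delta_{i,j,X}$, in particular $\geq 2$.

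The single delicate point, which I expect to be the main obstacle, is justifying that the listed conditions are independent, so that the partition rule and the additivity of the fibre contribution deliver the exact codimension and not merely an upper bound. Rather than arguing transversality, I would settle this by the explicit parametrization above: once the partition of base points is fixed, the representative base points, the free base points and the residual fibre directions range over an honest product of copies of $\mathbb{P}(E_s)$ and of $\mathbb{P}^{r-1}$, whose dimension is read off directly and compared with $\dim\Delta_{i,j,X}=dr-1$. The only places where this comparison needs care are the shared-index subcases of (1) and (3), where one must track how the enlarged block absorbs the overlapping index.
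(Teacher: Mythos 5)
Your proposal is essentially the paper's own argument: the paper also reduces to $(i,j)=(1,2)$ and reads off each codimension from an explicit presentation of the relevant intersection as a fibered product of copies of $\mathbb{P}(E)$ over $X$ and over $S$ (e.g.\ $\Delta_{1,2,X}\cong \mathbb{P}(E)^2_X\times_S\mathbb{P}(E)^{d-2}_S$), which is exactly your partition-plus-fibre-direction bookkeeping, phrased globally rather than fibrewise over $S$. Your counts for items (1), (3) and (4) agree with the paper's.

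One point to flag in item (2): your stated conclusion, codimension $r$ of $\Delta_{i,j}$ in $\Delta_{i,j,X}$, contradicts your own counting rules. By those rules $\mathrm{codim}(\Delta_{i,j},\mathcal{Z})=r$ and $\mathrm{codim}(\Delta_{i,j,X},\mathcal{Z})=1$, so $\mathrm{codim}(\Delta_{i,j},\Delta_{i,j,X})=r-1$: on top of $x_i=x_j$, the condition $v_i=v_j$ imposes only the coincidence of two points of $\mathbb{P}^{r-1}$, i.e.\ $r-1$ further conditions, not $r$. Equivalently, inside $\Delta_{i,j,X}$ the locus $\Delta_{i,j}$ is the relative diagonal of $\mathbb{P}(E)\times_X\mathbb{P}(E)$, whose normal bundle $\mathcal{T}_{\mathbb{P}(E)/X}$ has rank $r-1$. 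In fairness, the lemma as stated and the paper's own proof contain the same off-by-one: the paper's presentation $\Delta_{1,2}\cong\mathbb{P}(E)\times_S\mathbb{P}(E)^{d-2}_S$ has relative dimension $(d-1)r$ over $S$, while $\Delta_{1,2,X}$ has relative dimension $dr-1$, so the codimension is $r-1$, not $r$. The slip is harmless downstream: all that is used later is that this codimension is $\geq 2$ when $r\geq 3$ (true, since $r-1\geq 2$), and in the $r=2$ case the paper treats $\Delta_{i,j}$ as a divisor in $\Delta_{i,j,X}$, which is consistent precisely with the corrected value $r-1=1$.
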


\begin{proof} Without loss of generality we can assume $(i,j)=(1,2)$. Then 
$$\Delta_{1,2,X}\cong \mathbb{P}(E)^2_X\times_{S}\mathbb{P}(E)^{d-2}_{S}\,.$$ 
\begin{enumerate}
\item Let $\{k,l\}\cap \{1,2\}=\emptyset$. Without loss of generality we can assume 
$(k,l)=(3,4)$. Then 
$$\Delta_{k,l}\cap \Delta_{1,2,X} \cong \mathbb{P}(E)^2_X\times_{S}\mathbb{P}(E)\times_{S} \mathbb{P}(E)^{d-4}_{S}\,.$$
Let $\{k,l\}\cap \{1,2\}=\{2\}$. Without loss of generality we can assume 
$(k,l)=(2,3)$. Then
$$\Delta_{k,l}\cap \Delta_{1,2,X}\cong \mb P(E)^2_X \times_S \mb P(E)^{d-3}_S\,.$$
Therefore in both these cases it have codimension $\geq 2$ in $\Delta_{1,2,X}$.
\item If $\{k,l\}=\{1,2\}$ then $\Delta_{1,2}\cong\mathbb{P}(E)\times_{S}\mathbb{P}(E)^{d-2}_{S}$. Hence it has  codimension $r$ in $\Delta_{1,2,X}$.
\item Let  $\{1,2\}\cap \{k,l,m\}=\emptyset$. Without loss of generality we can assume 
$(k,l,m)=(3,4,5)$. Then 
$$\Delta_{k,l,m,X}\cap \Delta_{1,2,X}\cong \mathbb{P}(E)^2_X \times_{S} \mathbb{P}(E)^3_X\times_{S}\mathbb{P}(E)^{d-5}_{S}\,.$$
Let $\{i,j\}\cap \{k,l,m\}=\{i\}$. Without loss of generality we can assume $k=i=1$ and $(l,m)=(3,4)$. Then
 $$\Delta_{k,l,m,X}\cap \Delta_{1,2,X}\cong \mathbb{P}(E)^4_X \times_{S} \mathbb{P}(E)^{d-4}_{S}\,.$$
 Hence in both of these two cases it has codimension $\geq 2$ in $\Delta_{1,2,X}$.
\item $\Delta_{1,2,k,X}\cong \mathbb{P}(E)\times_{X} \mathbb{P}(E) \times_{X} \mathbb{P}(E)\times_{S} \mathbb{P}(E)^{d-3}_{S}$. Hence it has codimension $1$ in $Y$. 
\end{enumerate}
\end{proof}
On $\Delta_{i,j,X}$ we define the line bundle 
$$\mc L:=p^{*}_{i}\mathcal{O}(-1)\otimes p^{*}_{j}\mathcal{O}(1)\otimes (p\circ p_{i})^{*}\mathcal{T}_{X/S}\vert_{\Delta_{i,j,X}}\,.$$  
By Lemma \ref{codimension of various closed sets} we have that $\Delta_{i,j,k,X}\subset \Delta_{i,j,X}$ is a divisor on $\Delta_{i,j,X}$.
\begin{lemma} \label{extending sections-1}
Fix $1\leq i,j\leq d$ with $i\neq j$. For any $n\geq 0$ we have
$$H^{0}(\Delta_{i,j,X}, \mathcal{L}\otimes \mathcal{O}(\sum\limits_{k\neq i,j}n \cdot\Delta_{i,j,k,X}))=0\,.$$
\end{lemma}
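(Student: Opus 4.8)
The plan is to reduce to the case $(i,j)=(1,2)$ by the obvious symmetry, and then to push $\mathcal{L}$ forward along the projection that \emph{forgets} the factor $p_1$ carrying the negative twist $\mathcal{O}(-1)$. The whole point is that along this projection $\mathcal{L}$ restricts on each fibre to the relative tautological bundle of a $\mathbb{P}^{r-1}$-bundle, which has no global sections, while every divisor $\Delta_{1,2,k,X}$ and every remaining tensor factor of $\mathcal{L}$ is pulled back from the base. Note that this argument will work uniformly for all $r\geq 2$ and does not need the semistability or genus hypotheses.

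Concretely, I would use the identification $\Delta_{1,2,X}\cong \mathbb{P}(E)\times_X\mathbb{P}(E)\times_S \mathbb{P}(E)^{d-2}_S$ from the proof of Lemma \ref{codimension of various closed sets}. Setting $\mathcal{Y}:=\mathbb{P}(E)\times_S\mathbb{P}(E)^{d-2}_S$ (the factors indexed by $p_2,\dots,p_d$) and letting $f:\mathcal{Y}\to X$ be $p\circ p_2$, one gets $\Delta_{1,2,X}=\mathbb{P}(E)\times_X\mathcal{Y}=\mathbb{P}_{\mathcal{Y}}(f^{*}E)$, so the forgetful map $\beta:\Delta_{1,2,X}\to\mathcal{Y}$ is a $\mathbb{P}^{r-1}$-bundle whose relative hyperplane bundle is exactly $p_1^{*}\mathcal{O}(1)$.

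Next I would verify that all the remaining data descends to $\mathcal{Y}$. Since $p_2^{*}\mathcal{O}(1)$ is pulled back from the $p_2$-factor and, on $\Delta_{1,2,X}$, one has $(p\circ p_1)^{*}\mathcal{T}_{X/S}=(p\circ p_2)^{*}\mathcal{T}_{X/S}=\beta^{*}f^{*}\mathcal{T}_{X/S}$, it follows that $\mathcal{L}=p_1^{*}\mathcal{O}(-1)\otimes\beta^{*}\mathcal{N}$ for a line bundle $\mathcal{N}$ on $\mathcal{Y}$. Likewise, for $k\neq 1,2$ the subscheme $\Delta_{1,2,k,X}$ is cut out inside $\Delta_{1,2,X}$ by the condition $p\circ p_k=p\circ p_2$, which involves only base coordinates; hence it is the flat pullback $\beta^{*}D_k$ of a divisor $D_k\subset\mathcal{Y}$, and $\mathcal{O}\big(\sum_{k}n\,\Delta_{1,2,k,X}\big)\cong\beta^{*}\mathcal{O}\big(\sum_{k}n\,D_k\big)$.

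Combining these, with $\mathcal{M}:=\mathcal{N}\otimes\mathcal{O}\big(\sum_{k}n\,D_k\big)$ on $\mathcal{Y}$,
\[
\mathcal{L}\otimes\mathcal{O}\Big(\sum_{k\neq i,j} n\,\Delta_{1,2,k,X}\Big)\;\cong\;\mathcal{O}_{\beta}(-1)\otimes\beta^{*}\mathcal{M},
\]
so by the projection formula $\beta_{*}\big(\mathcal{O}_{\beta}(-1)\otimes\beta^{*}\mathcal{M}\big)=\big(\beta_{*}\mathcal{O}_{\beta}(-1)\big)\otimes\mathcal{M}=0$, because the relative tautological bundle of a $\mathbb{P}^{r-1}$-bundle with $r\geq 2$ satisfies $\beta_{*}\mathcal{O}_{\beta}(-1)=0$ (fibrewise $H^{0}(\mathbb{P}^{r-1},\mathcal{O}(-1))=0$). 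The vanishing of $H^{0}$ then follows by the Leray spectral sequence. The step to get right is the third paragraph: checking that forgetting precisely the $p_1$-factor turns $\mathcal{L}$ into $\mathcal{O}_{\beta}(-1)$ times a $\beta$-pullback and that each $\Delta_{1,2,k,X}$ really is a $\beta$-pullback; once that is established, the conclusion is just the projection formula together with the vanishing of sections of $\mathcal{O}(-1)$ on projective space.
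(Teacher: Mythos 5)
Your proof is correct and takes essentially the same route as the paper: there, one pushes forward along the projection $f:\Delta_{i,j,X}\to\mathbb{P}(E)^{d-1}_S$ forgetting the $i$-th factor, uses the projection formula to write the sheaf as $p_i^{*}\mathcal{O}(-1)$ tensored with a line bundle pulled back from the base (absorbing $p_j^{*}\mathcal{O}(1)$, the tangent twist, and the divisors $\Delta_{i,j,k,X}$), and concludes from $f_{*}p_i^{*}\mathcal{O}(-1)=g_i^{*}p_{*}\mathcal{O}(-1)=0$ by flat base change. Your identification of the forgetful map $\beta$ as the projective bundle $\mathbb{P}_{\mathcal{Y}}(f^{*}E)\to\mathcal{Y}$ with $\beta_{*}\mathcal{O}_{\beta}(-1)=0$ is exactly this argument phrased intrinsically.
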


\begin{proof} 
\if
Consider the following composition
$$\Delta_{i,j,X}\hookrightarrow \mc Z \to (\mb P(E))^{d-1}_S\,$$
where the second map is given by the product of projections $\prod\limits^d_{l=1,l\neq i}p_k$.
Let us denote this composition by $f$.
\fi
Let $f:\Delta_{i,j,X}\to \mb P(E)^{d-1}_S$ be the product of all the projections except the $i$-th projection. 
Then by projection formula
$$f_*(\mc L\otimes \mathcal{O}(\sum\limits_{k\neq i,j}n\cdot\Delta_{i,j,k,X}))=(f_* p^*_i\mc O(-1))\otimes \mc L' $$
for some line bundle $\mc L'$ on $\mb P(E)^{d-1}_S$. Consider  
the following fibered diagram:
\[
\begin{tikzcd}
\Delta_{i,j,X} \arrow[r, "p_{i}"] \arrow[d,"f"] & \mathbb{P}(E) \arrow[d,"p"] \\
\mathbb{P}(E)_{S}^{d-1} \arrow[r,"g_i"] & X 
\end{tikzcd}
\]
Here $g_i$ is the composition of $i$-th projection from $\mathbb{P}(E)_{S}^{d-1}$ and the morphism $p:\mb P(E)\to X$. Since $g_i$ is flat we have 
$$f_*p^*_i\mc O(-1)=g^*_ip_*\mc O(-1)\,.$$
Since $p_*\mc O(-1)=0$ we have that $f_*p^*_i\mc O(-1))=0$.  This completes the proof of the lemma.
\if
Hence it follows that $H^0(\Delta_{i,j,X},\mathcal{L}\otimes \mathcal{O}(\sum\limits_{k=1,k\neq i,j}^{d}n\Delta_{i,j,k,X}))=0$.

We will show that 
$$f_*(\mc L \otimes \mathcal{O}(\sum\limits_{k=3}^{d}n\Delta_{1,2,k,X})))=0\,.$$

\textbf{Claim.} $(p_{2}\times p_{3}\times ..\times p_{d})_{*}(\mathcal{L}\bigotimes \mathcal{O}(\sum\limits_{k=3}^{d}n\Delta_{1,2,k,X}))=0$\\

\textit{Proof of claim.}  Let us denote the $\mathbb{P}(E)\times_{S} (\mathbb{P}(E))_{S}^{d-2} \rightarrow \mathbb{P}(E)$ the $k$-th projection to $\mathbb{P}(E)$ by $p'_{k}$, where $3\leq k \leq d$.\\
Now, consider the following diagram:\\

\hspace{3cm}\begin{tikzcd}[row sep=huge, column sep=huge]
(\mathbb{P}(E) \times_{X} \mathbb{P}(E))\times_{S} (\mathbb{P}(E))_{S}^{d-2} \arrow[d," p_{2}\times p_{3}\times ..\times p_{d}=:f"] \\
(\mathbb{P}(E))\times_{S} (\mathbb{P}(E))_{S}^{d-2} \arrow[d,"(p\circ p_{2}')\times (p\circ p'_{3})\times ... \times(p\circ p'_{d})=: g"] \\
X\times_{S} (X)_{S}^{d-2}
\end{tikzcd}\\
Let us denote the map $p_{2}\times p_{3}\times ..\times p_{d}$ by $f$, and $(p\circ p'_{2})\times (p\circ p'_{3})\times ... \times (p\circ p'_{d}))$ by $g$. Then, if we denote by $X_{k}\subseteq X\times_{S} (X)_{S}^{d-2}$ the closed subscheme whose points are of the form $(x_{1},x_{2},..,x_{d-1})$ with $x_{1}=x_{k}$, then $\mathcal{O}(\sum\limits_{k=3}^{d}n\Delta_{1,2,k,X}))=(g\circ f)^{*}\mathcal{O}(\sum\limits^{d}_{k=2}nX_{k})$.\\
Hence, by projection formula we have\\
$
f_{*}(\mathcal{L}\bigotimes \mathcal{O}(\sum\limits_{k=3}^{d} n\Delta_{1,2,k,X}))=f_{*}((p_{1})_{*}\mathcal{O}(-1)\bigotimes(p_{2})^{*}\mathcal{O}(1)\bigotimes (p\circ p_{2})^{*}\mathcal{T}_{X/S}\bigotimes \mathcal{O}(\sum\limits_{k=3}^{d} n\Delta_{1,2,k,X})) \\
=(f_{*}((p_{1})^{*}\mathcal{O}(-1)))\bigotimes ((p'_{2})^{*}\mathcal{O}(1))\bigotimes (p\circ p'_{2})^{*}\mathcal{T}_{X/S}\bigotimes g^{*}\mathcal{O}(\sum\limits^{d}_{k=2}nX_{k}))
$
 \\
Hence, it is enough to show that $f_{*}(p_{1})^{*}\mathcal{O}(-1)=0$.\\
Now consider the following fibered diagram:\\

\hspace{2cm}\begin{tikzcd}
(\mathbb{P}(E)\times_{X}\mathbb{P}(E))\times_{S}(\mathbb{P}(E))_{S}^{d-2} \arrow[r, "p_{1}"] \arrow[d,"f"] & \mathbb{P}(E) \arrow[d,"p"] \\
\mathbb{P}(E)\times_{S} (\mathbb{P}(E))_{S}^{d-2} \arrow[r,"p\circ p'_{2}"] & X 
\end{tikzcd} \\

 Since  $p\circ p'_{2}$ is flat, we have by [7, Prop. 9.3] \\
 $f_{*}p^{*}_{1}\mathcal{O}(1)\cong (p\circ p'_{2})^{*}p_{*}\mathcal{O}(-1)=0$.\qed\\
\fi
\end{proof}

\begin{proposition} Let $r={\rm rank}~E\geq 3$.
For $1\leq i,j\leq d,~i\neq j$ we have
$$H^{0}(X\times_{S}\mathcal{U}, \mathscr{E}xt^{1}(\pi^{*}_{2,i}\mathcal{O}(1)|_{\Delta_{i}}, \pi^{*}_{2,j}\mathcal{O}(1)|_{\Delta_{j}}))=0\,.$$
\end{proposition}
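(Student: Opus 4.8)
The plan is to push the entire computation down to the section $\Delta_{i,j,X}\subset\mathcal{Z}$ and then read off the vanishing from the divisor estimates already in place. The sheaf $\mathscr{E}xt^{1}(\pi^{*}_{2,i}\mathcal{O}(1)|_{\Delta_{i}},\pi^{*}_{2,j}\mathcal{O}(1)|_{\Delta_{j}})$ is supported on $\Delta_{i}\cap\Delta_{j}$, and $\pi_{2}$ restricts to an isomorphism of this support onto $\Delta_{i,j,X}$; thus by Corollary \ref{pushforward of ext sheaves simplified} the pushforward $(\pi_{2})_{*}\mathscr{E}xt^{1}(\dots)$ is exactly the line bundle $\mathcal{L}$ placed on $\Delta_{i,j,X}$. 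Since taking global sections commutes with pushforward, and restriction to the open set $\mathcal{U}$ commutes with $(\pi_{2})_{*}$ (because $\pi_{2}^{-1}(\mathcal{U})=X\times_{S}\mathcal{U}$), I would first reduce the claim to
\[
H^{0}\big(X\times_{S}\mathcal{U},\mathscr{E}xt^{1}(\pi^{*}_{2,i}\mathcal{O}(1)|_{\Delta_{i}},\pi^{*}_{2,j}\mathcal{O}(1)|_{\Delta_{j}})\big)=H^{0}(\Delta_{i,j,X}\cap\mathcal{U},\mathcal{L})\,.
\]
Everything then comes down to showing that $\mathcal{L}$ has no nonzero sections over $\Delta_{i,j,X}\cap\mathcal{U}$.

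Next I would analyse the complement $\Delta_{i,j,X}\setminus\mathcal{U}$ inside the smooth variety $\Delta_{i,j,X}$ by stratifying it according to Lemma \ref{codimension of various closed sets}. The loci $\Delta_{k,l}\cap\Delta_{i,j,X}$ with $\{k,l\}\neq\{i,j\}$, the locus $\Delta_{i,j}\cap\Delta_{i,j,X}$, and the loci $\Delta_{k,l,m,X}\cap\Delta_{i,j,X}$ with $\{i,j\}\not\subseteq\{k,l,m\}$ all have codimension $\geq 2$; here the hypothesis $r\geq 3$ is precisely what forces the $\Delta_{i,j}$ stratum to have codimension $\geq 2$, via part (2) of that lemma. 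The only codimension-one components of $\Delta_{i,j,X}\setminus\mathcal{U}$ are therefore the divisors $\Delta_{i,j,k,X}$ for $k\neq i,j$. Since $\mathcal{L}$ is locally free and $\Delta_{i,j,X}$ is normal, sections of $\mathcal{L}$ extend uniquely across the codimension-$\geq 2$ part, giving
\[
H^{0}(\Delta_{i,j,X}\cap\mathcal{U},\mathcal{L})=H^{0}\big(\Delta_{i,j,X}\setminus\textstyle\bigcup_{k\neq i,j}\Delta_{i,j,k,X},\,\mathcal{L}\big)\,.
\]

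Finally, a section of $\mathcal{L}$ over the complement of $\bigcup_{k\neq i,j}\Delta_{i,j,k,X}$ is a rational section whose poles are bounded along these divisors, so for some $n\gg 0$ it lies in $H^{0}(\Delta_{i,j,X},\mathcal{L}\otimes\mathcal{O}(\sum_{k\neq i,j}n\cdot\Delta_{i,j,k,X}))$. This group vanishes by Lemma \ref{extending sections-1}, and passing to the union over $n\geq 0$ yields $H^{0}(\Delta_{i,j,X}\cap\mathcal{U},\mathcal{L})=0$, which proves the proposition. The main point to get right — and the only place the rank hypothesis is used — is the codimension bookkeeping of the second paragraph: one must verify that after discarding the codimension-$\geq 2$ strata the \emph{only} surviving boundary divisors are the $\Delta_{i,j,k,X}$, so that Lemma \ref{extending sections-1} applies verbatim.
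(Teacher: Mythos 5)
Your proof is correct and follows essentially the same route as the paper's: reduce via Corollary \ref{pushforward of ext sheaves simplified} to the vanishing of $H^{0}(\Delta_{i,j,X}\cap\mathcal{U},\mathcal{L})$, use Lemma \ref{codimension of various closed sets} (with $r\geq 3$ forcing the $\Delta_{i,j}$ stratum into codimension $\geq 2$) to extend sections across everything except the divisors $\Delta_{i,j,k,X}$, and then kill the resulting section of the twisted line bundle by Lemma \ref{extending sections-1}. The only difference is presentational: you make explicit the support-and-pushforward justification of the first reduction and the normality needed for the codimension-two extension step, which the paper leaves implicit.
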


\begin{proof}
By Corollary \ref{ext sheaves simplified} it is enough to show 
$$H^0(\mc U\cap \Delta_{i,j,X},\mc L)=0\,.$$
\if Let us denote the open set $\Delta_{i,j,X}\setminus (\bigcup\limits_{k\neq i,j} \Delta_{i,j,k,X})$ by $V$.\fi 
Since $r\geq 3$ by Lemma \ref{codimension of various closed sets}  we have  
$$H^{0}(\mathcal{U} \cap \Delta_{i,j,X}, \mathcal{L})=H^{0}(\Delta_{i,j,X}\setminus (\bigcup\limits_{k\neq i,j} \Delta_{i,j,k,X}),\mathcal{L})\,.$$
Let $s \in H^{0}(V,\mathcal{L}|_{V})$. Then for some $n$ large enough, there exists a section $0\neq t\in H^{0}(\Delta_{i,j,X},\mathcal{O}(\sum\limits_{k\neq i,j}n \cdot \Delta_{1,2,k,X}))$ such that 
the section $st^{n}$ extends to a global section of $\mathcal{L}\otimes \mathcal{O}(\sum\limits_{k\neq i,j}n\cdot\Delta_{1,2,k,X})$. However by Lemma \ref{extending sections-1} there are no global sections of this line bundle and this completes the proof of the proposition.
\end{proof}
Since $p_{S}$ is a projective morphism, we have a $p_{S}$-ample line bundle
$\mc O_{X}(1)$. Let $\mc O_{S}(1)$ be an ample line bundle on $S$. Then for
$a \gg 0$ the line bundle $\mc O_{X}(1)\otimes \mc O_{S}(a)$ is an ample line bundle on $X$. We fix such an ample line bundle $\mc M$ on $X$.
\begin{lemma}\label{extending section-2}
Let $E$ be semistable with respect to $\mc M$, ${\rm rank}~E=2$ and genus of $X_s\geq2$ for any $s\in S$.
Fix $1\leq i \leq d$. Then for any $n\geq 0$ we have
$$H^{0}(\mathcal{Z},p_{i}^{*}\mathcal{T}^{n}_{\mathbb{P}(E)/X}\otimes (p\circ p_{i})^{*}\mathcal{T}_{X/S} \otimes \mathcal{O}(\sum\limits_{k\neq i}n \cdot \Delta_{i,k,X}))=0\,.$$
\end{lemma}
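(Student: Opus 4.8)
The plan is to push the statement down to a vanishing of sections of an $\mathcal{M}$-semistable sheaf of negative slope on $X$. The point to keep in mind throughout is that the argument cannot be fibrewise: restricted to a single curve $X_{s}$ the relevant bundle becomes semistable of slope $(2-2g)+n(d-1)$, which is positive once $n$ is large, so it does have sections on $X_{s}$; the vanishing is forced only by the global geometry of the coincidence divisors.

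First I would use that $\mathrm{rank}\,E=2$, so the relative Euler sequence yields $\mathcal{T}_{\mathbb{P}(E)/X}\cong\mathcal{O}(2)\otimes p^{*}(\det E)^{-1}$ and hence $p_{i}^{*}\mathcal{T}^{n}_{\mathbb{P}(E)/X}\cong p_{i}^{*}\mathcal{O}(2n)\otimes(p\circ p_{i})^{*}(\det E)^{-n}$. Each divisor $\Delta_{i,k,X}$ is the preimage, under the $\mathbb{P}^{1}$-bundle $\sigma:=(p\circ p_{i})\times\prod_{k\neq i}p_{k}\colon\mathcal{Z}\to\mathcal{Z}':=X\times_{S}\prod_{k\neq i}\mathbb{P}(E)$, of the graph divisor $\Delta'_{k}=\{x=p\circ p_{k}\}$. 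Writing $a\colon\mathcal{Z}'\to X$ for the projection and applying $\sigma_{*}$ together with $\sigma_{*}\mathcal{O}(2n)=a^{*}\mathrm{Sym}^{2n}E$ and the projection formula, the statement becomes $H^{0}\bigl(\mathcal{Z}',\,a^{*}V\otimes\mathcal{O}(n\sum_{k\neq i}\Delta'_{k})\bigr)=0$ with $V:=\mathrm{Sym}^{2n}E\otimes(\det E)^{-n}\otimes\mathcal{T}_{X/S}$.

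The engine is a vanishing on $X$. Since $\det\bigl(\mathrm{Sym}^{2n}E\otimes(\det E)^{-n}\bigr)$ is trivial and we are in characteristic $0$ with $\mathrm{rank}\,E=2$, the bundle $\mathrm{Sym}^{2n}E\otimes(\det E)^{-n}$ is $\mathcal{M}$-semistable of $\mathcal{M}$-slope $0$. For $a\gg0$ the leading term in $a$ of $\deg_{\mathcal{M}}\mathcal{T}_{X/S}$ is a positive multiple of $2-2g<0$, so $\mu_{\mathcal{M}}(\mathcal{T}_{X/S})<0$; here $g\geq2$ is used. Consequently, for every $m\geq1$ the sheaf $\mathrm{Sym}^{2n}E\otimes(\det E)^{-n}\otimes\mathcal{T}^{m}_{X/S}$ is $\mathcal{M}$-semistable of negative $\mathcal{M}$-slope and so has no nonzero global sections. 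I would then filter $a^{*}V\otimes\mathcal{O}(n\sum\Delta'_{k})$ by order of pole along the graphs $\Delta'_{k}$, whose normal bundle is $x_{k}^{*}\mathcal{T}_{X/S}$ for $x_{k}:=p\circ p_{k}$: each graded quotient restricts to $\Delta'_{k}$ as $x_{k}^{*}(V\otimes\mathcal{T}^{m}_{X/S})$ twisted by $\mathcal{O}(n\sum_{l}D_{kl})$, where $D_{kl}=\{x_{k}=x_{l}\}$. Pushing the remaining $\mathbb{P}(E)$-factors down to $X$ only introduces further negative $\mathcal{T}_{X/S}$-factors coming from the normal bundles of the $D_{kl}$, so every contribution is reduced to the vanishing groups above with some $m\geq1$; the pole-free term is $H^{0}(X,V)=0$, i.e.\ the case $m=1$. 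Left-exactness of $H^{0}$ along the filtration then gives the result.

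The main obstacle is this last step. Because there are $d-1$ coincidence divisors that meet one another, the pole filtration is not single-parameter, and I expect to organise it as an induction on $d$, peeling off one $\mathbb{P}(E)$-factor at a time, in order to verify that every graded piece really does descend to $\mathrm{Sym}^{2n}E\otimes(\det E)^{-n}\otimes\mathcal{T}^{\geq1}_{X/S}$ with no surviving positive twist. The conceptual heart, which must be emphasised, is that this global negativity—supplied jointly by $g\geq2$ (making $\mathcal{T}_{X/S}$ negative) and by the $\mathcal{M}$-semistability of the rank-$2$ bundle $E$ (making $\mathrm{Sym}^{2n}E\otimes(\det E)^{-n}$ slope $0$)—is exactly what the three hypotheses of the lemma provide.
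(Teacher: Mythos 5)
Your proposal is correct and follows essentially the same route as the paper's proof: both convert $\mathcal{T}^{n}_{\mathbb{P}(E)/X}$ into $S^{2n}(E)\otimes(\det E^{\vee})^{n}$ by pushing the projective-bundle factor(s) down via the projection formula, strip the poles along the coincidence divisors using that their normal bundles are pullbacks of $\mathcal{T}_{X/S}$, and conclude from $H^{0}(X,S^{2n}(E)\otimes(\det E^{\vee})^{n}\otimes\mathcal{T}^{m}_{X/S})=0$ for $m\geq 1$, which holds by $\mathcal{M}$-semistability and negative degree (using $g\geq 2$ and $a\gg 0$). The only cosmetic difference is that the paper pushes all factors down to $X^{d}_{S}$ at once before filtering, whereas you peel off the $i$-th $\mathbb{P}(E)$-factor first; the nested induction you flag as the main obstacle is exactly what the paper carries out (equally tersely) by iterating its single-divisor exact sequence.
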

\begin{proof} 
Without loss of generality we can assume $i=1$.
Let us denote the $j$-th projection from $(X)_{S}^{d}$ to $X$ by $p_{j,X}$. We define $X_{j}\subseteq (X)_{S}^{d}$ to be the closed set defined by the equation $p_{1,X}=p_{j,X}$. By projection formula we have
\begin{align*}
& (\prod\limits_{j}(p\circ p_j))_{*}(p_{i}^{*}\mathcal{T}^{n}_{\mathbb{P}(E)/X}\otimes (p\circ p_{i})^{*}\mathcal{T}_{X/S} \otimes \mathcal{O}(\sum\limits_{k\neq i}n \cdot \Delta_{i,k,X})) \\
& = p^{*}_{1,X}(S^{2n}(E)\otimes ({\rm det}(E^{\vee}))^{n})\otimes  p_{1.X}^{*} \mathcal{T}_{X/S} \otimes \mathcal{O}(\sum\limits_{k=2}^{d}n \cdot X_{k}) \,.
\end{align*}
Now we have the following exact sequence
\[
\begin{tikzcd}
0 \arrow[r] & \mathcal{O}(nX_{k}) \arrow[r] & \mathcal{O}((n+1)X_{k}) \arrow[r] & \mathcal{O}((n+1)X_{k})|_{X_{k}} \arrow[r] & 0 
\end{tikzcd}
\]
Note that $\mathcal{O}((n+1)X_{k})|_{X_{k}}=p^{*}_{1,X}\mathcal{T}^{n+1}_{X/S}$.
Tensoring the above exact sequence with $p^{*}_{1,X}(S^{2n}(E)\otimes (\text{det}(E^{\vee}))^{n})\otimes  p_{1.X}^{*} \mathcal{T}_{X/S}$ and applying $H^{0}$ we get that it is enough to show
$$H^{0}(\mathcal{Z},p^{*}_{1,X}(S^{2n}(E)\otimes (\text{det}(E^{\vee}))^{n})\otimes  p_{1.X}^{m} \mathcal{T}_{X/S})=0~\forall n\geq 0, m\geq 1\,.$$
Applying projection formula for the morphism $p_{1,X}$ we get  
\begin{align*}
& (p_{1,X})_{*}(p^{*}_{1,X}(S^{2n}(E)\otimes (\text{det}(E^{\vee}))^{n})\otimes  p_{1,X}^{m} \mathcal{T}_{X/S})                         \\
& = S^{2n}(E)\otimes (\text{det}(E^{\vee}))^{n} \otimes \mathcal{T}^{m}_{X/S} \,.
\end{align*}
Hence it is enough to show that 
$$H^{0}(X,S^{2n}(E)\otimes (\text{det}(E^{\vee}))^{n} \otimes \mathcal{T}^{m}_{X/S})=0~\forall n\geq 0, m\geq 1\,.$$
Now 
$${\rm deg}~S^{2n}(E)=\binom{2+2n-1}{2}{\rm deg}~E=n(2n+1){\rm deg}~E$$
and ${\rm rank} ~S^{2n}(E)=2n+1$.
Therefore  
$${\rm deg}~S^{2n}(E)\otimes (\text{det}(E^{\vee}))^{n} \otimes \mathcal{T}^{m}_{X/S}=m(2n+1)(\text{deg }\mathcal{T}_{X/S})\,.$$
Since genus of each fibre of $X\rightarrow S$ is $\geq 2$, deg $\mathcal{T}_{X/S}<0$. Hence
$${\rm deg}~S^{2n}(E)\otimes (\text{det}(E^{\vee}))^{n} \otimes \mathcal{T}^{m}_{X/S}<0\,.$$
Since $E$ is semistable we have that the bundle $S^{2n}(E)\otimes (\text{det}(E^{\vee}))^{n} \otimes \mathcal{T}^{m}_{X/S}$ is also semistable with negative degree. Therefore it does not have any global section.
\end{proof}
\begin{proposition}\label{global sections of ext sheaves}
Let $r={\rm rank}~E=2$, $E$ is semistable with respect to $\mc M$ and genus of $C$ is $\geq 2$. For $1\leq i,j\leq d,~i\neq j$ we have
$$H^{0}(X\times_{S}\mathcal{U}, \mathscr{E}xt^{1}(\pi^{*}_{2,i}\mathcal{O}(1)|_{\Delta_{i}}, \pi^{*}_{2,j}\mathcal{O}(1)|_{\Delta_{j}}))=0\,.$$
\end{proposition}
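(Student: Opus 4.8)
The plan is to follow the structure of the proof given in the case $r\geq 3$, the one essential new feature being that for $r=2$ the diagonal $\Delta_{i,j}$ is a \emph{divisor} in $\Delta_{i,j,X}$ (its codimension is $r-1=1$) rather than a locus of codimension $\geq 2$, so poles along it can no longer be ignored. Since $\pi_2$ restricts to an isomorphism from the support $\Delta_i\cap\Delta_j$ of the sheaf $\mathscr{E}xt^1(\pi^{*}_{2,i}\mathcal{O}(1)|_{\Delta_{i}},\pi^{*}_{2,j}\mathcal{O}(1)|_{\Delta_{j}})$ onto $\Delta_{i,j,X}$, taking global sections over $X\times_S\mc U$ is the same as taking them over $\mc U\cap\Delta_{i,j,X}$, and Corollary \ref{pushforward of ext sheaves simplified} identifies the relevant direct image with $\mc L$. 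Thus it suffices to prove
$$H^0(\mc U\cap\Delta_{i,j,X},\mc L)=0.$$
First I would use Lemma \ref{codimension of various closed sets} to record that, inside $\Delta_{i,j,X}$, the only codimension $1$ components of the complement of $\mc U\cap\Delta_{i,j,X}$ are $\Delta_{i,j}$ and the divisors $\Delta_{i,j,k,X}$ with $k\neq i,j$, every other removed locus having codimension $\geq 2$. As $\Delta_{i,j,X}$ is smooth and $\mc L$ is a line bundle, sections extend across the codimension $\geq 2$ loci, so that
$$H^0(\mc U\cap\Delta_{i,j,X},\mc L)=H^0\bigl(\Delta_{i,j,X}\setminus(\Delta_{i,j}\cup{\textstyle\bigcup_{k}}\Delta_{i,j,k,X}),\,\mc L\bigr).$$

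Next, given a section on this open set, multiplying by high powers of the defining sections of the two families of divisors yields, for $n$ large, a section of $\mc L\otimes\mc O(n\Delta_{i,j}+\sum_{k\neq i,j}n\Delta_{i,j,k,X})$ on all of $\Delta_{i,j,X}$; hence it is enough to show that this twisted line bundle has no nonzero global sections for every $n\geq 0$. The new pole along $\Delta_{i,j}$ I would remove by descending induction on its order, peeling off $\Delta_{i,j}$ one unit at a time via the standard sequences
$$0\to\mc L\otimes\mc O((m-1)\Delta_{i,j}+{\textstyle\sum_k}n\Delta_{i,j,k,X})\to\mc L\otimes\mc O(m\Delta_{i,j}+{\textstyle\sum_k}n\Delta_{i,j,k,X})\to\bigl(\mc L\otimes\mc O(m\Delta_{i,j}+{\textstyle\sum_k}n\Delta_{i,j,k,X})\bigr)\big|_{\Delta_{i,j}}\to 0.$$
This reduces the claim to two vanishing statements: the base case $m=0$, which is exactly Lemma \ref{extending sections-1}, and the vanishing of $H^0$ of each restriction to $\Delta_{i,j}$.

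The heart of the matter is therefore the restriction computation, which I expect to be the only genuinely delicate point. Using $\Delta_{i,j}\cong\mathbb{P}(E)\times_S\mathbb{P}(E)^{d-2}_{S}$ together with the identification of the normal bundle of the diagonal $\mathbb{P}(E)\hookrightarrow\mathbb{P}(E)\times_X\mathbb{P}(E)$ with the relative tangent bundle $\mathcal{T}_{\mathbb{P}(E)/X}$, and noting that on $\Delta_{i,j}$ the factor $p_i^*\mathcal{O}(-1)\otimes p_j^*\mathcal{O}(1)$ of $\mc L$ becomes trivial since $p_i=p_j$ there, I expect
$$\bigl(\mc L\otimes\mc O(m\Delta_{i,j}+{\textstyle\sum_k}n\Delta_{i,j,k,X})\bigr)\big|_{\Delta_{i,j}}\cong p_i^*\mathcal{T}^m_{\mathbb{P}(E)/X}\otimes(p\circ p_i)^*\mathcal{T}_{X/S}\otimes\mc O({\textstyle\sum_k}n\Delta_{i,k,X}),$$
which is precisely the line bundle appearing in Lemma \ref{extending section-2} (on the $(d-1)$-fold product), so its $H^0$ vanishes. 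The one caveat is that the power $m$ of $\mathcal{T}_{\mathbb{P}(E)/X}$ and the power $n$ of the divisors $\Delta_{i,k,X}$ need not coincide; but the proof of Lemma \ref{extending section-2} reduces everything to the vanishing of $H^0(X,S^{2a}(E)\otimes(\det E^{\vee})^a\otimes\mathcal{T}^b_{X/S})$ for $a\geq 0$ and $b\geq 1$, which holds for independent $a,b$ by the degree-plus-semistability argument given there (the degree is $b(2a+1)\deg\mathcal{T}_{X/S}<0$ since the fibres have genus $\geq 2$). Matching the divisors correctly under the identification $\Delta_{i,j}\cong\mc Z_{d-1}$ and the normal-bundle computation are the steps that require care; the remaining bookkeeping is formal.
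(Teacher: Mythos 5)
Your proposal follows essentially the same route as the paper's own proof: the same reduction to $H^0(\mc U\cap\Delta_{i,j,X},\mc L)=0$, the same discarding of the codimension $\geq 2$ loci, the same twist by $n(\Delta_{i,j}+\sum_{k\neq i,j}\Delta_{i,j,k,X})$, the same peeling off of $\Delta_{i,j}$ by restriction sequences with Lemma \ref{extending sections-1} as base case, and the same identification of the restricted bundle (trivializing $p_i^*\mathcal{O}(-1)\otimes p_j^*\mathcal{O}(1)$ on the diagonal and using the normal bundle $\mathcal{T}_{\mathbb{P}(E)/X}$) so that Lemma \ref{extending section-2} applies. If anything, you are more careful than the paper at the two points it glosses over: the explicit induction on the pole order along $\Delta_{i,j}$, and the fact that the exponent of $\mathcal{T}_{\mathbb{P}(E)/X}$ and the coefficient of the divisors $\Delta_{i,j,k,X}$ need not coincide, which, as you correctly note, the degree-plus-semistability argument in the proof of Lemma \ref{extending section-2} handles for independent exponents.
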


\begin{proof}
By Corollary \ref{ext sheaves simplified} it is enough to show 
$$H^0(\mc U\cap \Delta_{i,j,X},\mc L)=0\,.$$
Define the open set  
$$V:=\Delta_{i,j,X}\setminus (\Delta_{i,j} \bigcup \bigcup\limits^{d}_{k\neq i,j}\Delta_{i,j,k,X})\subset \Delta_{i,j,X}\,.$$ 
By Lemma \ref{codimension of various closed sets} we have 
$$H^{0}(\Delta_{i,j,X}\cap \mathcal{U},\mathcal{L})=H^{0}(V,\mathcal{L})\,.$$ Therefore, to show that this space vanishes, it is enough to show that 
$$H^{0}(Y,\mathcal{L}(n(\Delta_{i,j}+\sum\limits_{k\neq i,j}\Delta_{i,j,k,X})))=0~\forall n\geq 0\,.$$
Now consider the following exact sequence:
\[
\begin{tikzcd}
0 \arrow[r] & \mathcal{O}(n\cdot \Delta_{i,j}) \arrow[r] & \mathcal{O}((n+1)\cdot\Delta_{i,j})\arrow[r] & \mathcal{O}(n\cdot\Delta_{i,j})|_{\Delta_{i,j}} \arrow[r] & 0
\end{tikzcd}
\]
Tensoring the above exact sequence by $\mathcal{L}(n \cdot \sum\limits_{k\neq i,j}\Delta_{i,j,k,X})$ and applying $H^{0}$ we see that it is enough to show that 
$$H^{0}(\Delta_{i,j}, \mathcal{L}(n(\Delta_{i,j}+\sum\limits_{i=1}^{d}\Delta_{i,j,k,X}))|_{\Delta_{i,j}})=0\,.$$ 
Note that $\mathcal{O}(\Delta_{i,j})|_{\Delta_{i,j}}=p^{*}_{i}\mathcal{T}_{\mathbb{P}(E)/S}|_{\Delta_{i,j}}$. Then 
\begin{align*}
 & \mathcal{L}(n(\Delta_{i,j}+\sum\limits_{k\neq i,j}\Delta_{i,j,k,X}))|_{\Delta_{i,j}} \\
= &   p^{*}_{i}\mathcal{O}(-1) \otimes p^{*}_{j}\mathcal{O}(1)\otimes (p\circ p_{i})^{*}\mathcal{T}_{X/S} \otimes p^{*}_{i}\mathcal{T}^{n}_{\mathbb{P}(E)/X} \otimes \mathcal{O}(\sum\limits_{k\neq i,j}n \cdot \Delta_{i,j,k,X}))|_{\Delta_{i,j}} \\
= &    p^{*}_{i}\mathcal{T}^{n}_{\mathbb{P}(E)/X} \otimes (p\circ p_{i})^{*}\mathcal{T}_{X/S} \otimes \mathcal{O}(\sum\limits_{k\neq i,j}n \cdot \Delta_{i,j,k,X}))|_{\Delta_{i,j}}\,.
\end{align*}
Identifying $\Delta_{i,j}$ with $\mathbb{P}(E)_{S}^{d-1}$ the statement follows from 
Lemma \ref{extending section-2}.
\end{proof}
\begin{proposition}\label{global sections of pullback of the tangent bundle}
Suppose either $r\geq 3$ or $r=2$, $E$ is semistable with respect to $\mc M$ and genus of $C$ is $\geq 2$. In both of these cases we have  an isomorphism
$$H^{0}(\mathcal{U},\Phi^{*}\mathcal{T}_{\mathcal{Q}(E,d)/S})=\bigoplus\limits_{i=1}^{d}H^{0}(\mathbb{P}(E),\mathcal{T}_{\mathbb{P}(E)/S})\,.$$
\end{proposition}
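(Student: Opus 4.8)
The plan is to reduce everything to the single-point computations already in place, via the defining sequence of $\mathcal F(E,d)$. First, by Lemma \ref{pullback of the tangent bundle} and the fact that global sections of a pushforward are the global sections of the original sheaf (here along $\pi_2\colon X\times_S\mathcal U\to\mathcal U$, using $((\pi_2)_*-)\vert_{\mathcal U}=(\pi_2\vert_{X\times_S\mathcal U})_*(-\vert_{X\times_S\mathcal U})$), it suffices to prove for each fixed $1\le j\le d$ that
$$H^{0}\bigl(X\times_S\mathcal U,\ \mathscr{H}om(\mathcal F(E,d),\pi_{2,j}^{*}\mathcal O(1)|_{\Delta_j})\bigr)=H^{0}(\mathbb P(E),\mathcal T_{\mathbb P(E)/S})$$
and then take the direct sum over $j$. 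Write $\mathcal G_i:=\pi_{2,i}^{*}\mathcal O(1)|_{\Delta_i}$ and recall $q=\bigoplus_i q_i$ with $q_i\colon\pi_1^{*}E\to\mathcal G_i$, so that $\mathcal F(E,d)=\bigcap_i\ker q_i$.

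The key device is the short exact sequence on $X\times_S\mathcal U$
$$0\to\mathcal F(E,d)\to\ker q_j\to\bigoplus_{i\ne j}\mathcal G_i\to 0,$$
whose second map is the restriction of $\bigoplus_{i\ne j}q_i$ to $\ker q_j$. Injectivity and the identification of the kernel with $\mathcal F(E,d)$ are immediate from $\mathcal F(E,d)=\bigcap_i\ker q_i$; surjectivity holds precisely because on $X\times_S\mathcal U$ the morphism $q$ is onto, so any local section of $\bigoplus_{i\ne j}\mathcal G_i$ lifts to $\pi_1^{*}E$ (placing $0$ in the $j$-th slot), giving an element of $\ker q_j$. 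Moreover $\ker q_j=(\pi_1\times\pi_{2,j})^{*}\mathcal F(E,1)$, a vector bundle of rank $r$ by \cite[Lemma 2.2]{G18}, since $\Delta_j=(\pi_1\times\pi_{2,j})^{-1}(\Delta)$ and hence the defining sequence of $\mathcal F(E,1)$ pulls back along $\pi_1\times\pi_{2,j}$.

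Next I would apply $\mathscr{H}om(-,\mathcal G_j)$ and pass to the long exact sequence of $\mathscr{E}xt$-sheaves. The term $\mathscr{H}om(\bigoplus_{i\ne j}\mathcal G_i,\mathcal G_j)=\bigoplus_{i\ne j}\mathscr{H}om(\mathcal G_i,\mathcal G_j)$ vanishes by Lemma \ref{hom of certain sheaves}, leaving the exact sequence of sheaves
$$0\to\mathscr{H}om(\ker q_j,\mathcal G_j)\to\mathscr{H}om(\mathcal F(E,d),\mathcal G_j)\xrightarrow{\ \alpha\ }\bigoplus_{i\ne j}\mathscr{E}xt^{1}(\mathcal G_i,\mathcal G_j).$$
Applying the left-exact functor $H^{0}(X\times_S\mathcal U,-)$ and using that $H^{0}(X\times_S\mathcal U,\mathscr{E}xt^{1}(\mathcal G_i,\mathcal G_j))=0$ for $i\ne j$ — which is exactly the vanishing established above for $r\ge 3$ and in Proposition \ref{global sections of ext sheaves} for $r=2$ — forces $H^{0}(\alpha)=0$, whence
$$H^{0}(X\times_S\mathcal U,\mathscr{H}om(\ker q_j,\mathcal G_j))\xrightarrow{\ \sim\ }H^{0}(X\times_S\mathcal U,\mathscr{H}om(\mathcal F(E,d),\mathcal G_j)).$$
Finally, Lemma \ref{comparision with global section of the tangent bundle of projective bundle} identifies the left-hand side with $H^{0}(\mathbb P(E),\mathcal T_{\mathbb P(E)/S})$; summing over $j$ and invoking Lemma \ref{pullback of the tangent bundle} yields the claim.

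I expect the main obstacle to be the careful justification of the short exact sequence on $X\times_S\mathcal U$ — in particular the surjectivity of $\ker q_j\to\bigoplus_{i\ne j}\mathcal G_i$, which genuinely uses that we have restricted to $\mathcal U$ where $q$ is onto, together with the identification $\ker q_j\cong(\pi_1\times\pi_{2,j})^{*}\mathcal F(E,1)$ as a vector bundle so that Lemma \ref{comparision with global section of the tangent bundle of projective bundle} applies verbatim. Everything downstream is a formal diagram chase once the vanishing of the off-diagonal global $\mathscr{E}xt^{1}$ is invoked, and the hypotheses ($r\ge 3$, or $r=2$ with $E$ semistable and genus $\ge 2$) enter only through that vanishing.
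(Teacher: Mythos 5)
Your proposal is correct and follows essentially the same route as the paper: the same reduction via Lemma \ref{pullback of the tangent bundle}, the same short exact sequence $0\to\mathcal{F}(E,d)\to(\pi_{1}\times\pi_{2,j})^{*}\mathcal{F}(E,1)\to\bigoplus_{i\neq j}\pi_{2,i}^{*}\mathcal{O}(1)|_{\Delta_{i}}\to 0$, and the same three inputs (Lemma \ref{hom of certain sheaves}, Lemma \ref{comparision with global section of the tangent bundle of projective bundle}, and the $\mathscr{E}xt^{1}$-vanishing propositions). The only cosmetic difference is that you build that sequence by hand (lifting sections with $0$ in the $j$-th slot), whereas the paper extracts it from a two-row diagram via the snake lemma.
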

\begin{proof}
By Lemma \ref{pullback of the tangent bundle} we have
$$\Phi^*\mc T_{\mc Q(E,d)/S}\cong \bigoplus\limits^d_{i=1}(\pi_2)_*\ms Hom(\mc F(E,d),\pi^{*}_{2,i}\mathcal{O}(1)|_{\Delta_{i}})\vert_{\mc U}\,.$$
Hence for a fixed $1\leq j\leq d$ it is enough to show 
$$H^0(X\times_S \mc U,\ms Hom(\mc F(E,d),\pi^{*}_{2,j}\mathcal{O}(1)|_{\Delta_{j}})=H^{0}(\mathbb{P}(E),\mathcal{T}_{\mathbb{P}(E)/S})\,.$$
Over $X\times_{S} \mathcal{U}$ we have the following commutative diagram:
\[
\begin{tikzcd} 
0 \arrow[r] & \mathcal{F}(E,d) \arrow[d] \arrow[r]                    & \pi^{*}_{1}E \arrow[r] \arrow[d, "\cong"] & \bigoplus\limits_{i=1}^{d} \pi^{*}_{2,i} \mathcal{O}(1)|_{\Delta_{i}} \arrow[r] \arrow[d]                                                  & 0 \\
0 \arrow[r] & (\pi_{1}\times \pi_{2,j})^{*}\mathcal{F}(E,1) \arrow[r] & \pi^{*}_{1}E \arrow[r]                    & \pi^{*}_{2,j}\mathcal{O}(1)|_{\Delta_{j}} \arrow[r] & 0 \\
\end{tikzcd}
\]
Using snake lemma for the above diagram we get the following exact sequence over $X\times_{S} \mathcal{U}$
$$0 \to \mathcal{F}(E,d) \to (\pi_{1} \times \pi_{2,j})^{*}\mathcal{F}(E,1) \to \bigoplus_{i=1,i\neq j}^d \pi^{*}_{2,i}\mathcal{O}(1)|_{\Delta_{i}} \to 0$$
\if
By Lemma 
\begin{enumerate}
\item $\mathscr{H}om(\pi^{*}_{2,i}\mathcal{O}(1)|_{\Delta_{i}}, \pi^{*}_{2,j}\mathcal{O}(1)|_{\Delta_{j}})=0\,.$
\item $H^{0}(X\times_{S} \mathcal{U},\mathscr{H}om ((\pi_{1}\times \pi_{2,j})^{*}\mathcal{F}(E,1),\pi_{2,j}^{*}\mathcal{O}(1)|_{\Delta_{j}}))= H^{0}(\mathbb{P}(E), \mathcal{T}_{\mathbb{P}(E)/S})\,.$
\item $H^{0}(X\times_{S}\mathcal{U}, \mathscr{E}xt^{1}(\pi^{*}_{2,i}\mathcal{O}(1)|_{\Delta_{i}}, \pi^{*}_{2,j}\mathcal{O}(1)|_{\Delta_{j}}))=0\,.$
\end{enumerate}
\fi
We apply $\mathscr{H}om($  $ ,\pi^{*}_{2,j}\mathcal{O}(1)|_{\Delta_{j}} )$ and then the 
the functor $H^0$. Now the result follows from Lemma \ref{hom of certain sheaves}, Lemma \ref{comparision with global section of the tangent bundle of projective bundle} and Proposition \ref{global sections of ext sheaves}.
\end{proof}
\if
 we get the Lemma 2.4(i) to the exact sequence $(2.3)$, we get that over $X\times_{S} \mathcal{U}$, we have the following exact sequence:
\begin{align*}
0 \to \mathcal{H}om((\pi_{1}\times \pi_{2,j})^{*}\mathcal{F}(E,1),\pi_{2,j}^{*}\mathcal{O}(1)|_{\Delta_{j}}) \to & \mathcal{H}om(\mathcal{F}(E,d),\pi_{2,j}^{*}\mathcal{O}(1)|_{\Delta_{j}})  \\
\to & \mathcal{E}xt^{1}(\bigoplus\limits_{i=1,i\neq j}^{d}\pi^{*}_{2,i}\mathcal{O}(1)|_{\Delta_{i}},\pi_{2,j}^{*}\mathcal{O}(1)|_{\Delta_{j}}) \to 0
\end{align*}

\begin{tikzcd}
0 \arrow[r] & \mathcal{H}om((\pi_{1}\times \pi_{2,j})^{*}\mathcal{F}(E,1),\pi_{2,j}^{*}\mathcal{O}(1)|_{\Delta_{j}}) \arrow[r] & \mathcal{H}om(\mathcal{F}(E,d),\pi_{2,j}^{*}\mathcal{O}(1)|_{\Delta_{j}}) \arrow[d]\\
   & 0  & \mathcal{E}xt^{1}(\bigoplus\limits_{i=1,i\neq j}^{d}\pi^{*}_{2,i}\mathcal{O}(1)|_{\Delta_{i}},\pi_{2,j}^{*}\mathcal{O}(1)|_{\Delta_{j}}) \arrow[l]
\end{tikzcd}\\
Applying $H^{0}$ to the above exact sequence and using Lemma 2.4(i) and (ii), we get that \\
$H^{0}(X\times_{S} \mathcal{U}, \mathcal{H}om(\mathcal{F}(E,d),\pi_{2,j}^{*}\mathcal{O}(1)|_{\Delta_{j}})=H^{0}(\mathbb{P}(E),\mathcal{T}_{\mathbb{P}(E)/S})$. 

\fi

\begin{theorem}\label{main theorem} 
Suppose either $r:={\rm rank}~E\geq 3$ or $r=2$, $E$ is semistable with respect to $\mc M$ and genus of $X_s$ is $\geq 2$ for $s\in S$. In both of these cases we have isomorphisms
\begin{enumerate}
\item ${\rm Aut}^{o}(\mathcal{Q}(E,d))\cong {\rm Aut}^{o}(\mathbb{P}(E)/S)$.
\item $H^{0}(\mathbb{P}(E),\mathcal{T}_{\mathbb{P}(E)/S})\cong H^{0}(\mathcal{Q}(E,d),\mathcal{T}_{\mathcal{Q}(E,d)/S})$.
\end{enumerate}
\end{theorem}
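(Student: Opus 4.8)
The plan is to establish part (2) first, namely that the injection
$$\iota\colon H^{0}(\mathbb{P}(E),\mathcal{T}_{\mathbb{P}(E)/S})\hookrightarrow H^{0}(\mathcal{Q}(E,d),\mathcal{T}_{\mathcal{Q}(E,d)/S})$$
of Corollary~\ref{inclusion of lie algebras} is in fact an isomorphism, and then to deduce part (1) from it by comparing Lie algebras. The engine for (2) is the rational map $\Phi\colon\mathcal{Z}\dashrightarrow\mathcal{Q}(E,d)$, which is a morphism on $\mathcal{U}$, together with the computation of $H^{0}(\mathcal{U},\Phi^{*}\mathcal{T}_{\mathcal{Q}(E,d)/S})$ supplied by Proposition~\ref{global sections of pullback of the tangent bundle}.

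Because pullback of sections of the locally free sheaf $\mathcal{T}_{\mathcal{Q}(E,d)/S}$ along $\Phi|_{\mathcal{U}}$ is functorial, I obtain a restriction map
$$\rho\colon H^{0}(\mathcal{Q}(E,d),\mathcal{T}_{\mathcal{Q}(E,d)/S})\longrightarrow H^{0}(\mathcal{U},\Phi^{*}\mathcal{T}_{\mathcal{Q}(E,d)/S})=\bigoplus_{i=1}^{d}H^{0}(\mathbb{P}(E),\mathcal{T}_{\mathbb{P}(E)/S}),$$
the last identification being Proposition~\ref{global sections of pullback of the tangent bundle}. First I would check that $\Phi$ is dominant: the quotients $E_{s}\to\bigoplus_{i}k_{x_{i}}$ supported at $d$ distinct points with one-dimensional fibres form a dense open subset of $\mathcal{Q}(E,d)$ and lie in $\Phi(\mathcal{U})$. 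Since $\mathcal{Q}(E,d)$ is an integral variety and $\mathcal{T}_{\mathcal{Q}(E,d)/S}$ is a vector bundle, a section vanishing on the dense set $\Phi(\mathcal{U})$ must vanish identically; hence $\rho$ is injective.

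The decisive step is to collapse the factor of $d$ by exploiting the symmetry of $\Phi$. The symmetric group $S_{d}$ acts on $\mathcal{Z}$ by permuting the $d$ factors, it preserves $\mathcal{U}$ (the removed loci $\bigcup\Delta_{i,j}$ and $\bigcup\Delta_{i,j,k,X}$ are symmetric in the indices), and $\Phi\circ\sigma=\Phi$ for every $\sigma\in S_{d}$, since the quotient $q|_{X\times_{S}u}$ depends only on the unordered collection of the points $p_{i}(u)$. Therefore $\Phi^{*}v$ is $S_{d}$-invariant for every $v$, so $\mathrm{im}(\rho)$ lands in the $S_{d}$-invariant part of $\bigoplus_{i=1}^{d}H^{0}(\mathbb{P}(E),\mathcal{T}_{\mathbb{P}(E)/S})$. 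As $S_{d}$ permutes the $d$ summands (these are indexed by the factors through the target $\bigoplus_{i}\pi_{2,i}^{*}\mathcal{O}(1)|_{\Delta_{i}}$ of the universal quotient), the invariants form exactly the diagonal copy of $H^{0}(\mathbb{P}(E),\mathcal{T}_{\mathbb{P}(E)/S})$. Combining injectivity of $\rho$ with this inclusion gives
$$\dim H^{0}(\mathcal{Q}(E,d),\mathcal{T}_{\mathcal{Q}(E,d)/S})\le\dim H^{0}(\mathbb{P}(E),\mathcal{T}_{\mathbb{P}(E)/S}),$$
and the reverse inequality is provided by the injection $\iota$. Hence the two finite-dimensional spaces have equal dimension and $\iota$ is an isomorphism, proving (2).

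For (1), Lemma~\ref{inclusion of aut of quot in aut of proj} furnishes a closed immersion of connected algebraic groups ${\rm Aut}^{o}(\mathbb{P}(E)/S)\hookrightarrow{\rm Aut}^{o}(\mathcal{Q}(E,d)/S)$, whose induced map on Lie algebras is precisely $\iota$ by \cite[Theorem 3.7]{MO}. Since $\iota$ is an isomorphism by (2), and since in characteristic zero every algebraic group is smooth so that its dimension equals that of its Lie algebra, the two groups have the same dimension; a closed subgroup of full dimension in a connected algebraic group is the whole group, so the closed immersion is an isomorphism, giving (1). I expect the only genuinely delicate points in this argument (beyond the granted Proposition~\ref{global sections of pullback of the tangent bundle}) to be the injectivity of $\rho$, which rests on the dominance of $\Phi$ and the integrality of $\mathcal{Q}(E,d)$, and the verification that the $S_{d}$-action is carried to the permutation action on the direct-sum decomposition, so that its invariants are exactly the diagonal.
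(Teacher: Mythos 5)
Your proposal is correct and takes essentially the same route as the paper: both arguments pull sections back along $\Phi$ (injectively, by dominance of $\Phi$ and integrality of $\mathcal{Q}(E,d)$), invoke Proposition~\ref{global sections of pullback of the tangent bundle}, use the $S_d$-symmetry of $\Phi$ to land in the diagonal copy of $H^{0}(\mathbb{P}(E),\mathcal{T}_{\mathbb{P}(E)/S})$, and then close the loop by a dimension count against the inclusions of Lemma~\ref{inclusion of aut of quot in aut of proj} and Corollary~\ref{inclusion of lie algebras}. You merely make explicit some points the paper leaves implicit (dominance of $\Phi$, smoothness of the automorphism group schemes in characteristic zero, and that a full-dimensional closed subgroup of a connected group is the whole group).
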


\begin{proof}
By  Corollary \ref{inclusion of lie algebras} we have an inclusion of lie algebras: 
\begin{equation}\label{eqn-inclusion of lie alg}
H^0(\mb P(E),\mc T_{\mb P(E)/S})\hookrightarrow H^0(\mc Q(E,d),\mc T_{\mc Q(E,d)/S})\,.
\end{equation}
By Lemma \ref{pullback of the tangent bundle} and Proposition \ref{global sections of pullback of the tangent bundle} we have an inclusion 
$$H^0(\mc Q(E,d),\mc T_{\mc Q(E,d)/S})\hookrightarrow \bigoplus\limits^d_{i=1}H^0(\mb P(E),\mc T_{\mb P(E)/S})\,.$$
Since $\mc Z\to \mc Q(E,d)$ is invariant under the action of the symmetric group 
$S_d$ we get that this inclusion factors through 
$$H^0(\mc Q(E,d),\mc T_{\mc Q(E,d)/S})\hookrightarrow(\bigoplus\limits^d_{i=1}H^0(\mb P(E),\mc T_{\mb P(E)/S}))^{S_d}=H^0(\mb P(E),\mc T_{\mb P(E)/S})\,.$$ 
Comparing the dimensions we get that the (\ref{eqn-inclusion of lie alg}) is an isomorphism. Hence the inclusion in Lemma \ref{inclusion of aut of quot in aut of proj} is an isomorphism.
\end{proof}

\section{Applications}\label{Applications}
\begin{corollary} \label{main corollary}
Suppose either $r\geq 3$ or $r=2$, $E$ is semistable with respect to $\mc M$ and genus of $C$ is $\geq 2$. Then we have the following  left exact sequence of algebraic groups
$$0 \to {\rm GL}(E)/k^* \to {\rm Aut}^o(\mc Q(E,d)/S)\to {\rm Aut}^o(X/S)$$
The corresponding sequence of lie algebras is given by
$$0 \to H^0(X,{\rm ad}~E) \to H^0(\mc Q(E,d),\mc T_{\mc Q(E,d)/S})\to H^0(X,\mc T_{X/S})$$
\end{corollary}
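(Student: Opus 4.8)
The plan is to transport everything through Theorem~\ref{main theorem} and reduce to a computation on the projective bundle $\mb P(E)$. By Theorem~\ref{main theorem}(1) we have ${\rm Aut}^{o}(\mc Q(E,d)/S)\cong {\rm Aut}^{o}(\mb P(E)/S)$, and by Theorem~\ref{main theorem}(2) we have $H^{0}(\mc Q(E,d),\mc T_{\mc Q(E,d)/S})\cong H^{0}(\mb P(E),\mc T_{\mb P(E)/S})$; hence it suffices to produce the two displayed sequences with $\mc Q(E,d)$ replaced by $\mb P(E)$. As recalled in the proof of Lemma~\ref{inclusion of aut of quot in aut of proj}, by \cite[Corollary 2.2]{MO} every $g\in {\rm Aut}^{o}(\mb P(E)/S)$ descends to $h\in {\rm Aut}^{o}(X/S)$, and $g\mapsto h$ defines a homomorphism of algebraic groups $\rho:{\rm Aut}^{o}(\mb P(E)/S)\to {\rm Aut}^{o}(X/S)$ (the target lies in the identity component since $\rho$ is a morphism of groups out of a connected source). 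The kernel of $\rho$ consists of the automorphisms of $\mb P(E)$ lying over $\mathrm{id}_X$, i.e. the fibre-preserving automorphisms.

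Next I would identify this kernel with ${\rm GL}(E)/k^{*}$. A vector bundle automorphism of $E$ induces an automorphism of $\mb P(E)$ over $X$, yielding a homomorphism ${\rm GL}(E)\to \ker\rho$ whose kernel is the group of scalar automorphisms $H^{0}(X,\mc O_X^{*})=k^{*}$ (using that $X$ is projective and connected). This gives a closed immersion ${\rm GL}(E)/k^{*}\hookrightarrow \ker\rho$. To see it exhausts the kernel I would compare Lie algebras. The relative Euler sequence pushes forward to $0\to \mc O_X\to \mc{E}nd(E)\to p_{*}\mc T_{\mb P(E)/X}\to 0$, so $p_{*}\mc T_{\mb P(E)/X}\cong {\rm ad}~E$ and
\[
{\rm Lie}(\ker\rho)=H^{0}(\mb P(E),\mc T_{\mb P(E)/X})=H^{0}(X,{\rm ad}~E),
\]
while the trace decomposition $\mc{E}nd(E)\cong \mc O_X\oplus {\rm ad}~E$ gives ${\rm Lie}({\rm GL}(E)/k^{*})=H^{0}(X,\mc{E}nd(E))/k=H^{0}(X,{\rm ad}~E)$. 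Since $\mathrm{char}~k=0$ everything in sight is smooth, ${\rm GL}(E)/k^{*}$ is connected (being the quotient of the unit group of the finite-dimensional algebra $H^{0}(X,\mc{E}nd(E))$), and the closed immersion is an isomorphism on Lie algebras; a dimension-plus-connectedness argument then forces ${\rm GL}(E)/k^{*}=\ker\rho$. This yields the left-exact group sequence, and applying Theorem~\ref{main theorem}(1) gives the first displayed sequence.

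For the Lie algebra sequence I would apply $H^{0}$ to the relative tangent sequence
\[
0\to \mc T_{\mb P(E)/X}\to \mc T_{\mb P(E)/S}\to p^{*}\mc T_{X/S}\to 0.
\]
Left exactness of global sections, together with $H^{0}(\mb P(E),\mc T_{\mb P(E)/X})=H^{0}(X,{\rm ad}~E)$ and the projection formula $H^{0}(\mb P(E),p^{*}\mc T_{X/S})=H^{0}(X,\mc T_{X/S})$ (using $p_{*}\mc O_{\mb P(E)}=\mc O_X$), produces
\[
0\to H^{0}(X,{\rm ad}~E)\to H^{0}(\mb P(E),\mc T_{\mb P(E)/S})\to H^{0}(X,\mc T_{X/S}),
\]
which is precisely ${\rm Lie}$ applied to the group sequence. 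Transporting through Theorem~\ref{main theorem}(2) gives the second displayed sequence.

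The main obstacle is the identification $\ker\rho={\rm GL}(E)/k^{*}$. It is \emph{not} true in general that every fibre-preserving automorphism of $\mb P(E)$ comes from a global automorphism of $E$: lifting a section of $\underline{{\rm PGL}}(E)$ to $\underline{{\rm GL}}(E)$ is obstructed by a class in ${\rm Pic}(X)$, so the full group ${\rm Aut}(\mb P(E)/X)$ can acquire extra components indexed by $\{[L]:E\cong E\otimes L\}$. I avoid this by working only on the identity component and matching Lie algebras, so that the connected group ${\rm GL}(E)/k^{*}$ of the correct dimension must coincide with the kernel; verifying that these extra components do not meet the connected group ${\rm Aut}^{o}(\mb P(E)/S)$ is the one point that needs care, whereas the Euler-sequence computation and the projection formula are routine.
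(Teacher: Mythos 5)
Your overall route is the same as the paper's: the paper's entire proof of Corollary~\ref{main corollary} consists of invoking Theorem~\ref{main theorem} together with the unproved assertion that ${\rm Aut}^{o}(\mb P(E)/S)$ and its Lie algebra fit into the two displayed sequences, and your proposal is an attempt to supply a proof of exactly that assertion. Your Lie-algebra half is complete and correct: the relative tangent sequence, the Euler-sequence identification $p_{*}\mc T_{\mb P(E)/X}\cong {\rm ad}\,E$, and left exactness of $H^{0}$ involve only identity components, so no subtlety arises there. The genuine gap is in the group-level step, precisely where you flag it. Your ``dimension-plus-connectedness argument'' shows only that ${\rm GL}(E)/k^{*}$ is the \emph{identity component} of $\ker\rho$; it cannot show $\ker\rho={\rm GL}(E)/k^{*}$, because the kernel of a homomorphism out of a connected algebraic group may well be disconnected (an isogeny of elliptic curves, or $t\mapsto t^{n}$ on $\mathbb{G}_{m}$). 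Matching Lie algebras and dimensions is therefore insufficient in principle, and the verification you defer --- that the components of ${\rm Aut}(\mb P(E)/X)$ indexed by nontrivial $L$ with $E\otimes L\cong E$ do not meet ${\rm Aut}^{o}(\mb P(E)/S)$ --- is never carried out.

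Worse, that missing step is not merely unproved but false under the stated hypotheses, which for $r\geq 3$ impose no condition on the genus. Take $S$ a point, $C$ an elliptic curve, $G=C[3]\cong(\Z/3)^{2}$, and let $G$ act on $C\times\mb P^{2}$ by $a\cdot(x,v)=(x+a,H(a)v)$, where $H:G\hookrightarrow{\rm PGL}_{3}$ is the Heisenberg projective representation generated by ${\rm diag}(1,\omega,\omega^{2})$ and the cyclic shift; the action is free, so $Y:=(C\times\mb P^{2})/G$ is smooth projective and $Y\to \bar C:=C/G$ is an \'etale-locally trivial $\mb P^{2}$-bundle, hence $Y\cong\mb P(E)$ for a rank~$3$ bundle $E$ on the elliptic curve $\bar C$ since ${\rm Br}(\bar C)=0$. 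Pulling back along $q:C\to\bar C$ gives $q^{*}\mc End(E)\cong M_{3}(k)\otimes\mc O_{C}$ equivariantly, so ${\rm End}(E)=M_{3}(k)^{G}=k$ (the commutant of the Heisenberg representation is the scalars): thus $E$ is simple and ${\rm GL}(E)/k^{*}=1$. On the other hand, translation on the first factor of $C\times\mb P^{2}$ commutes with the $G$-action and descends to an action of $C$ on $\mb P(E)$, whose image lies in ${\rm Aut}^{o}(\mb P(E))$ by connectedness; for $a\in C[3]$ the descended automorphism satisfies $[(x,v)]\mapsto[(x+a,v)]=[(x,H(-a)v)]$, i.e.\ it lies over ${\rm id}_{\bar C}$ and acts on every fibre by the nontrivial element $H(-a)$. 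Hence $\ker\rho\cap{\rm Aut}^{o}(\mb P(E))\supseteq(\Z/3)^{2}$ while ${\rm GL}(E)/k^{*}=1$: the sequence is not exact at the middle term, and the extra components (here indexed by $\{L\in{\rm Pic}^{0}(\bar C)[3]\}$, all of which satisfy $E\otimes L\cong E$) really do meet the identity component. So the corollary, as stated for $r\geq 3$ with genus-one fibres, cannot be proved by your argument or any other; the paper's one-line proof glosses over exactly this point, and its genus-one application at the end of Section~\ref{Applications} inherits the problem. Both the statement and your argument become correct precisely when ${\rm Aut}^{o}(X/S)$ is trivial (e.g.\ fibre genus $\geq 2$, which covers the $r=2$ branch and all the labelled applications, Corollaries~\ref{corollary-g greater than 1}, \ref{automorphisms of flag schemes} and \ref{automorphism of generalised quot scheme}): then the connected group ${\rm Aut}^{o}(\mb P(E)/S)$ maps trivially to ${\rm Aut}(X/S)$, hence is a connected subgroup of ${\rm Aut}(\mb P(E)/X)$ and so lies in its identity component ${\rm GL}(E)/k^{*}$, with no Lie-algebra comparison needed.
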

\begin{proof}
The left exactness of the above sequences follow from Theorem \ref{main theorem} and from the fact that $\text{Aut}^{o}(\mathbb{P}(E)/S)$ and its lie algebra fits into the above exact sequences.
\end{proof}

\begin{corollary}\label{corollary-g greater than 1}
Let the genus of the fibres of $X\rightarrow S$ is $\geq 2$. Suppose either $r\geq 3$ or $r=2$ and $E$ is semistable with respect to $\mc M$. Then 
\begin{enumerate}
\item ${\rm Aut}^{o}(\mathcal{Q}(E,d)/S)=\text{GL}(E)/k^{*}$.
\item $H^{0}(\mathcal{Q}(E,d),\mathcal{T}_{\mathcal{Q}(E,d)/S})=H^{0}(X,\text{ad }E)$.
\end{enumerate}
\end{corollary}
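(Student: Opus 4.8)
The plan is to deduce both statements directly from Corollary \ref{main corollary} once we establish that the rightmost terms in its two sequences vanish; the single computation driving everything is the vanishing $H^{0}(X,\mc T_{X/S})=0$. First I would restrict the relative tangent bundle to a fibre: for each closed point $s\in S$ one has $\mc T_{X/S}|_{X_s}\cong \mc T_{X_s}$, a line bundle of degree $2-2g$ on the smooth projective curve $X_s$ of genus $g\geq 2$. Hence $\deg \mc T_{X_s}<0$ and $H^{0}(X_s,\mc T_{X_s})=0$ for every $s$. As this holds on all fibres, the pushforward $(p_S)_*\mc T_{X/S}$ is the zero sheaf on $S$, and therefore $H^{0}(X,\mc T_{X/S})=H^{0}(S,(p_S)_*\mc T_{X/S})=0$.

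For part (2), I would feed this vanishing into the left exact sequence of Lie algebras in Corollary \ref{main corollary},
\[
0 \to H^{0}(X,{\rm ad}~E) \to H^{0}(\mc Q(E,d),\mc T_{\mc Q(E,d)/S}) \to H^{0}(X,\mc T_{X/S})\,.
\]
Since the last term is zero, the inclusion $H^{0}(X,{\rm ad}~E)\hookrightarrow H^{0}(\mc Q(E,d),\mc T_{\mc Q(E,d)/S})$ is also surjective, hence an isomorphism, which is precisely statement (2).

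For part (1), I would first observe that the vanishing above makes the family of curves infinitesimally rigid: by \cite[Theorem 3.7]{MO} one has ${\rm Lie}({\rm Aut}^{o}(X/S))=H^{0}(X,\mc T_{X/S})=0$, and since we work in characteristic zero the group scheme ${\rm Aut}^{o}(X/S)$ is smooth, so a trivial Lie algebra forces ${\rm Aut}^{o}(X/S)$ to be the trivial group. (This is where the genus $\geq 2$ hypothesis is essential: it encodes the classical fact that a curve of genus $\geq 2$ has no infinitesimal automorphisms.) Plugging ${\rm Aut}^{o}(X/S)=0$ into the left exact sequence of algebraic groups
\[
0 \to {\rm GL}(E)/k^* \to {\rm Aut}^{o}(\mc Q(E,d)/S) \to {\rm Aut}^{o}(X/S)
\]
of Corollary \ref{main corollary}, the map ${\rm GL}(E)/k^*\to {\rm Aut}^{o}(\mc Q(E,d)/S)$ becomes surjective as well as injective, hence an isomorphism, giving statement (1).

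The argument carries no serious obstacle; the only point requiring a little care is the final deduction, where one must invoke smoothness of the automorphism group scheme in characteristic zero in order to pass from vanishing of the Lie algebra to triviality of ${\rm Aut}^{o}(X/S)$, rather than merely to its reducedness. Everything else is a formal consequence of the exact sequences already produced in Corollary \ref{main corollary}.
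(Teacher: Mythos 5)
Your proof is correct and follows essentially the same route as the paper: both deduce $H^{0}(X,\mc T_{X/S})=0$ from the genus hypothesis (via vanishing of $(p_S)_*\mc T_{X/S}$), conclude ${\rm Aut}^{o}(X/S)$ is trivial, and then read off both statements from the left exact sequences of Corollary \ref{main corollary}. Your write-up merely makes explicit two steps the paper leaves implicit, namely the fibrewise degree computation and the use of smoothness of group schemes in characteristic zero to pass from a trivial Lie algebra to a trivial group.
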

\begin{proof}
If genus of each fibre is $\geq 2$ then $(p_{S})_{*}\mathcal{T}_{X/S}=0$. In particular $H^{0}(X,\mathcal{T}_{X/S})=0$. Hence $\text{Aut}^{o}(X/S)=0$. Now the corollary follows from Corollary \ref{main corollary}.
\end{proof}
Taking $S$ to be a point and $E=\mc O^r_C$ in Corollary \ref{corollary-g greater than 1} we get  \cite[Theorem 3.1]{BDH} and \cite[Corollary 3.2]{BDH}.  
\begin{corollary} 
Let $C$ be a smooth projective curve of genus $\geq 2$ over an algebraically closed field $k$ of characteristic zero. Then
\begin{enumerate}
\item ${\rm Aut}^{o}(\mathcal{Q}(\mc O^r_C,d)/S)=\text{PGL}(r)$.
\item $H^{0}(\mathcal{Q}(\mc O^r_C,d),\mathcal{T}_{\mathcal{Q}(\mc O^r_C,d)})=\mf sl(r)$.
\end{enumerate}
\end{corollary}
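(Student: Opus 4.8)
The plan is to obtain this statement as the special case $S=\{\mathrm{pt}\}$, $X=C$, $E=\mc O^r_C$ of Corollary \ref{corollary-g greater than 1}, and then to evaluate the two quantities $\mathrm{GL}(E)/k^*$ and $H^0(X,\mathrm{ad}\,E)$ appearing there. First I would verify the hypotheses of Corollary \ref{corollary-g greater than 1}. The genus of $C$ is $\geq 2$ by assumption, so the only remaining point is that when $r=2$ the trivial bundle $\mc O^2_C$ must be semistable. This is clear: $\mc O^r_C$ has slope $0$ and every subsheaf of $\mc O^r_C$ has non-positive degree, hence non-positive slope; moreover over a smooth projective curve semistability does not depend on the polarization, so the auxiliary line bundle $\mc M$ plays no role. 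Thus Corollary \ref{corollary-g greater than 1} applies for every $r\geq 2$.

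For the first assertion I would identify the bundle-automorphism group $\mathrm{GL}(E)=\mathrm{Aut}(\mc O^r_C)$ with $\mathrm{GL}(r,k)$. The key input is that $C$ is smooth projective, hence integral and proper, so $H^0(C,\mc O_C)=k$. Consequently $H^0\big(C,\mathcal{E}nd(\mc O^r_C)\big)=\mathrm{Mat}_{r\times r}\big(H^0(C,\mc O_C)\big)=\mathrm{Mat}_{r\times r}(k)$, whose group of units is $\mathrm{GL}(r,k)$. Quotienting by the scalar subgroup $k^*$ then gives $\mathrm{GL}(E)/k^*=\mathrm{GL}(r)/k^*=\mathrm{PGL}(r)$, which together with Corollary \ref{corollary-g greater than 1}(1) proves part (1).

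For the second assertion I would compute $H^0(C,\mathrm{ad}\,E)$ for $E=\mc O^r_C$. Writing $\mathrm{ad}\,E$ for the subbundle of trace-free endomorphisms, there is a canonical isomorphism $\mathrm{ad}(\mc O^r_C)\cong \mc O_C\otimes_k \mathfrak{sl}(r)$; taking global sections and again using $H^0(C,\mc O_C)=k$ yields $H^0\big(C,\mathrm{ad}(\mc O^r_C)\big)=\mathfrak{sl}(r)$. With Corollary \ref{corollary-g greater than 1}(2) this gives part (2). As a consistency check, in characteristic zero $\mathfrak{sl}(r)$ is precisely the Lie algebra of $\mathrm{PGL}(r)$, in agreement with part (1).

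Since the whole corollary is a direct specialization of Corollary \ref{corollary-g greater than 1}, I do not expect any serious obstacle; the entire content reduces to the two elementary facts that $H^0(C,\mc O_C)=k$ and that the trivial bundle is semistable. The only place calling for a moment's attention is confirming the $r=2$ hypothesis of Corollary \ref{corollary-g greater than 1}, which as explained above is immediate from the semistability of $\mc O^2_C$.
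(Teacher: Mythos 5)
Your proposal is correct and follows exactly the paper's route: the paper obtains this corollary by taking $S$ to be a point and $E=\mathcal{O}^r_C$ in Corollary \ref{corollary-g greater than 1}, precisely as you do. Your added verifications (semistability of the trivial bundle for $r=2$, $H^0(C,\mathcal{O}_C)=k$, and the identifications ${\rm GL}(E)/k^*\cong{\rm PGL}(r)$ and $H^0(C,{\rm ad}\,\mathcal{O}^r_C)\cong\mathfrak{sl}(r)$) are exactly the routine steps the paper leaves implicit.
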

Let $C$ be a smooth projective curve of genus $\geq 2$ over an algebraically closed field $k$ of characteristic zero. Fix $\underline{d}=(d_{1},d_{2},\ldots,d_{k})\in \mathbb{N}^{k}$ with $d_{1}>d_{2}>\ldots>d_{k}$ and $r\geq 1$. Let $\mathcal{D}(r,\underline{d})$ be the flag scheme parametrizing chain of quotients of $\mathcal{O}^{r}_{C}\to B_{1}\to B_{2}\to \ldots \to B_{d}$ where $B_{i}$ is a torsion quotient of degree $d_{i}$ \cite[2.A.1]{HL}. It is known that $\mc D(r,\underline{d})$ is a smooth projective variety.
\begin{corollary}\label{automorphisms of flag schemes}
We have the following isomorphisms of algebraic groups and lie algebras
\begin{enumerate}
\item ${\rm Aut}^{o}(\mathcal{D}(r,\underline{d}))\cong {\rm PGL}(r)$.
\item $H^{0}(\mathcal{D}(r,\underline{d}), \mathcal{T}_{D(r,\underline{d})})= \mf{sl}(r)$.
\end{enumerate}
\end{corollary}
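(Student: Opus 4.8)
The plan is to exhibit $\mc D(r,\underline d)$ as the top of a tower of relative Quot schemes and to apply Theorem \ref{main theorem} and Corollary \ref{main corollary} at each stage, reducing everything to Corollary \ref{corollary-g greater than 1}. For $2\le j\le k$ let $\mc D_j$ be the flag scheme of truncated chains $\mc O^r_C\to B_j\to\cdots\to B_k$, so $\mc D_1=\mc D(r,\underline d)$ and $\mc D_k=\mc Q(\mc O^r_C,d_k)$. Writing $A_i=\ker(\mc O^r_C\to B_i)$, a point of $\mc D_j$ is a flag $A_j\subset\cdots\subset A_k\subset\mc O^r_C$, and forgetting $A_j$ gives a morphism $g_j:\mc D_j\to\mc D_{j+1}$. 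The fibre of $g_j$ over a chain with subsheaf $A_{j+1}$ parametrizes the subsheaves $A_j\subset A_{j+1}$ of colength $d_j-d_{j+1}$ in the rank-$r$ bundle $A_{j+1}$; hence, letting $\mc A_{j+1}\subset\mc O^r_{C\times\mc D_{j+1}}$ be the universal subsheaf, $g_j$ identifies $\mc D_j$ with the relative Quot scheme $\mc Q(\mc A_{j+1},\,d_j-d_{j+1})$ over $S:=\mc D_{j+1}$ for the constant family $X:=C\times\mc D_{j+1}\to\mc D_{j+1}$. Each $\mc D_j$ is smooth, projective and irreducible, and $g_j$ is smooth.

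As in Lemma \ref{inclusion of aut of quot in aut of proj}, the action of $\mathrm{GL}_r(k)=\mathrm{Aut}(\mc O^r_C)$ transforms flags of subsheaves, scalars acting trivially, so it embeds $\mathrm{PGL}(r)\hookrightarrow\mathrm{Aut}^o(\mc D(r,\underline d))$ and gives $\mathfrak{sl}(r)\hookrightarrow H^0(\mc D,\mc T_{\mc D})$. It then suffices to prove the reverse bound $\dim H^0(\mc D,\mc T_{\mc D})\le r^2-1$; the isomorphism of group schemes follows since both groups are reduced in characteristic zero. I would show by descending induction that $H^0(\mc D_j,\mc T_{\mc D_j})\hookrightarrow\mathfrak{sl}(r)$ for every $j$, the base case $j=k$ being Corollary \ref{corollary-g greater than 1} applied to $\mc Q(\mc O^r_C,d_k)$ (valid since $r\ge 3$).

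For the step from $j+1$ to $j$ I would take global sections of the relative tangent sequence of $g_j$,
$$0\to\mc T_{\mc D_j/\mc D_{j+1}}\to\mc T_{\mc D_j}\to g_j^{*}\mc T_{\mc D_{j+1}}\to 0\,.$$
The fibres of $g_j$ are Quot schemes of torsion quotients of a bundle on a curve, hence connected with $H^0(\mc O)=k$, so $(g_j)_*\mc O_{\mc D_j}=\mc O_{\mc D_{j+1}}$ and the projection formula gives $H^0(\mc D_j,g_j^{*}\mc T_{\mc D_{j+1}})=H^0(\mc D_{j+1},\mc T_{\mc D_{j+1}})$. For the vertical term, Theorem \ref{main theorem}(2) for $\mc Q(\mc A_{j+1},d_j-d_{j+1})/\mc D_{j+1}$ (no stability hypothesis, as $r\ge3$) identifies $H^0(\mc D_j,\mc T_{\mc D_j/\mc D_{j+1}})$ with $H^0(\mb P(\mc A_{j+1}),\mc T_{\mb P(\mc A_{j+1})/\mc D_{j+1}})$, which Corollary \ref{main corollary} rewrites, using $H^0(C,\mc T_C)=0$ (genus $\ge2$, so $H^0(X,\mc T_{X/S})=0$), as $H^0(C\times\mc D_{j+1},\mathrm{ad}\,\mc A_{j+1})$. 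Thus, \emph{provided this last group vanishes}, left-exactness yields $H^0(\mc D_j,\mc T_{\mc D_j})\hookrightarrow H^0(\mc D_{j+1},\mc T_{\mc D_{j+1}})$, the induction closes, and $H^0(\mc D,\mc T_{\mc D})=\mathfrak{sl}(r)$, giving $\mathrm{Aut}^o(\mc D)=\mathrm{PGL}(r)$.

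The crux is therefore $H^0(C\times\mc D_{j+1},\mathrm{ad}\,\mc A_{j+1})=0$, i.e. $H^0(\mc E nd\,\mc A_{j+1})=k$, and I expect this to demand the most care: the individual fibres are typically non-simple (for colength one, $A_{j+1}\cong\mc O^{r-1}\oplus\mc O(-x)$ has many endomorphisms), so the point is that the fibrewise endomorphisms do not globalize. I would argue as follows. From $0\to\mc A_{j+1}\to\mc O^r\to\mc B_{j+1}\to0$ one gets $H^0(\mc A_{j+1})=0$, since a constant vector lying in every fibre $A_{j+1}$ would be annihilated by every quotient covector occurring in the family; dualizing gives $0\to\mc O^r\to\mc A_{j+1}^{\vee}\to\mc E xt^1(\mc B_{j+1},\mc O)\to0$. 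The key input is $H^0(C\times\mc D_{j+1},\mc E xt^1(\mc B_{j+1},\mc O))=0$: by relative duality this $\mc E xt$ sheaf is the universal quotient twisted by a relatively negative bundle, and its vanishing follows from the pushforward mechanism $p_*\mc O(-1)=0$ used in Lemmas \ref{extending sections-1} and \ref{extending section-2}. Granting it, $H^0(\mc A_{j+1}^{\vee})=k^r$, so every global homomorphism $\mc A_{j+1}\to\mc O^r$, in particular every global endomorphism of $\mc A_{j+1}$, is the restriction of a constant matrix $M\in\mathrm{Mat}_r(k)$; and $M$ preserves $\mc A_{j+1}$ iff every quotient covector $\phi$ occurring is a left eigenvector of $M$, forcing $M$ scalar. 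Verifying the vanishing of $H^0(\mc E xt^1(\mc B_{j+1},\mc O))$ in the needed generality is the main obstacle.
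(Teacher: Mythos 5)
Your proof follows the same skeleton as the paper's: realize $\mc D(r,\underline d)$ as a tower of relative Quot schemes $\mc D_j\cong\mc Q(\mc A_{j+1},d_j-d_{j+1})$ over $\mc D_{j+1}$, use the relative tangent sequence and $g_{j*}\mc O_{\mc D_j}=\mc O_{\mc D_{j+1}}$ to reduce the induction step to the vanishing of $H^0(C\times\mc D_{j+1},{\rm ad}\,\mc A_{j+1})$, and combine the resulting chain of inclusions with the ${\rm PGL}(r)$ lower bound and reducedness in characteristic zero. All of that is correct and matches the paper (which phrases the same reduction as ``induction on $k$'' and cites Corollary \ref{corollary-g greater than 1} for the identification $H^0(\mc T_{\mc D_j/\mc D_{j+1}})\cong H^0({\rm ad}\,\mc A_{j+1})$; your route through Theorem \ref{main theorem}(2) plus Corollary \ref{main corollary} is the same content).

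The divergence, and the genuine gap, is at the crux: the vanishing $H^0(C\times\mc D_{j+1},{\rm ad}\,\mc A_{j+1})=0$, i.e.\ simplicity of the universal subsheaf. The paper does not prove this by hand; it invokes \cite[Theorems 3.2.4, 5.1]{G18}, which assert that $\mc A_{j+1}$ is \emph{stable} with respect to suitable polarisations on $C\times\mc D_{j+1}$, and stable bundles are simple. Your substitute --- $H^0(\mc A_{j+1})=0$, dualize to $0\to\mc O^r\to\mc A_{j+1}^{\vee}\to\mathscr{E}xt^1(\mc B_{j+1},\mc O)\to 0$, conclude $H^0(\mc A_{j+1}^{\vee})=k^r$, then the constant-matrix/eigenvector argument --- has a correct endgame, but it hangs entirely on $H^0(C\times\mc D_{j+1},\mathscr{E}xt^1(\mc B_{j+1},\mc O))=0$, which you leave unproven and correctly flag as the main obstacle. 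This is not a technical loose end: it is the whole content of the induction step, and ``the pushforward mechanism $p_*\mc O(-1)=0$'' does not directly apply, since Lemmas \ref{extending sections-1} and \ref{extending section-2} live on $\mc Z$ (a fibre product of projective bundles), not on $C\times\mc D_{j+1}$; to transport a vanishing along the dominant map $\Phi$ you would still need base change for the $\mathscr{E}xt^1$ sheaf and injectivity of restriction of its sections (e.g.\ via local freeness of its pushforward to $\mc D_{j+1}$), plus codimension estimates over the flag scheme, none of which appears in your write-up. The short repair is to quote the stability result of \cite{G18} exactly where you need simplicity; note also that this fixes a secondary scope issue, since your argument assumes $r\geq 3$ throughout, while for $r=2$ even the identification $H^0(\mc T_{\mc D_j/\mc D_{j+1}})\cong H^0({\rm ad}\,\mc A_{j+1})$ requires semistability of $\mc A_{j+1}$, i.e.\ again \cite{G18}.
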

\begin{proof}
Let $\underline{d}':=(d_2,d_3,\ldots,d_k)$.
Over $C\times \mathcal{D}(r,\underline{d}')$ we have the universal chain of filtrations:
$$
\mathcal{A}(r,d_{2})\subset \mathcal{A}(r,d_{3})\subset \ldots \subset \mathcal{A}(r,d_{k})\subset \mathcal{O}^{r}_{C\times \mathcal{D}(r,\underline{d}')}\,.
$$
Then  $D(r,\underline{d})$ is the relative quot scheme of torsion quotients of degree $d_{1}-d_{2}$ of the vector bundle $\mathcal{A}(r,d_{2})$ for the map 
$$C\times \mathcal{D}(r,\underline{d}')\rightarrow \mathcal{D}(r,\underline{d}')\,.$$ 
By Corollary \ref{corollary-g greater than 1} we get that
$$H^{0}(\mathcal{D}(r,\underline{d}),\mathcal{T}_{\mathcal{D}(r,\underline{d})/\mathcal{D}(r,\underline{d}')})=H^{0}(C\times \mathcal{D}(r,\underline{d}'), {\rm ad }~\mathcal{A}(r,d_{2}))\,.$$ 
By \cite[Theorem 3.2.4, Theorem 5.1]{G18} the bundle $\mathcal{A}(r,d_{2})$ is stable with respect to certain polarisations on $C\times \mathcal{D}(r,\underline{d}')$. Hence by Corollary \ref{corollary-g greater than 1} 
we have 
$$H^{0}(C\times \mathcal{D}(r,\underline{d}'), \text{ad }\mathcal{A}(r,d_{2}))=0\,.$$ 
By induction on k we get that 
$$H^{0}(\mathcal{D}(r,\underline{d}),\mathcal{T}_{D(r,\underline{d})})=H^{0}(C,\text{ad }\mathcal{O}^{r}_{C})=\mf{sl}(r)\,.$$ 
This completes the proof of the corollary.
\end{proof}
Let $C$ be a smooth projective curve over an algebraically closed field of characteristic zero. In \cite{BM} the authors computed the identity component of automorphism group scheme
of a certain generalized quot scheme $\mc Q_C(r,d_p,d_z)$. We recall the definition of this scheme: Fix $r\geq 2,d_p,d_z\geq 1$. Consider the quot scheme $\mc Q(\mc O^r_C,d_p)$ and the universal kernel bundle $\mc A(r,d_p)$  over $C\times \mc Q(\mc O^r_C,d_p)$. Then $\mc Q(r,d_p,d_z)$ is defined as the relative Quot scheme associated to the projection $C\times \mc Q(\mc O^r_C,d_p)\to \mc Q(\mc O^r_C,d_p)$ and the bundle $\mc A(r,d_p)^{\vee}$. By \cite[Theorem 3.2.4]{G18} $\mc A(r,d_p)$ is stable with respect to certain polarisations. Hence 
$H^0(C\times \mc Q(r,d_p),{\rm ad}~\mc A(r,d_p)^{\vee})=0$. Now from Theorem 
\ref{main theorem} we get the result proved in \cite{BM}:
\begin{corollary}\cite[Theorem 2.1]{BM} \label{automorphism of generalised quot scheme}
Let $C$ be a smooth projective curve of genus $\geq 2$ over an algebraically closed field $k$ of characteristic zero. We have the following isomorphisms of algebraic groups and lie algebras
\begin{enumerate}
\item ${\rm Aut}^{o}(\mathcal{Q}(r,d_p,d_z))\cong {\rm PGL}(r)$.
\item $H^{0}(\mathcal{Q}(r,d_p,d_z), \mathcal{T}_{\mc Q(r,d_p,d_z)})= \mf{sl}(r)$.
\end{enumerate}
\end{corollary}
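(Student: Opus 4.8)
The plan is to realize $\mc Q(r,d_p,d_z)$ as an instance of the relative Quot scheme of Theorem \ref{main theorem} and then to upgrade the resulting \emph{relative} statement to the \emph{absolute} one claimed here. Set $S:=\mc Q(\mc O^r_C,d_p)$, let $X:=C\times S\to S$ be the constant family of genus $\geq 2$ curves, and put $E:=\mc A(r,d_p)^{\vee}$, a rank $r$ bundle on $X$; by construction $\mc Q(r,d_p,d_z)=\mc Q(E,d_z)$ with structure morphism $\pi:\mc Q(E,d_z)\to S$. Since $\mc A(r,d_p)$ is stable with respect to a suitable polarisation by \cite[Theorem 3.2.4]{G18}, so is $E$, so the hypotheses of Corollary \ref{corollary-g greater than 1} hold in both cases $r\geq 3$ and $r=2$. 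Applying that corollary gives the relative vanishing $H^{0}(\mc Q(E,d_z),\mc T_{\mc Q(E,d_z)/S})=H^{0}(X,{\rm ad}~E)=0$ and ${\rm Aut}^{o}(\mc Q(E,d_z)/S)={\rm GL}(E)/k^{*}=\{1\}$, the last equality because a stable bundle has only scalar automorphisms.

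The key step is to convert these vanishing relative invariants into the absolute ones. The morphism $\pi$ is smooth with geometrically connected fibres (each fibre is a Quot scheme of torsion quotients on $C$), so $\pi_{*}\mc O_{\mc Q(E,d_z)}=\mc O_{S}$ and the relative tangent sequence
$$0\to \mc T_{\mc Q(E,d_z)/S}\to \mc T_{\mc Q(E,d_z)}\to \pi^{*}\mc T_{S}\to 0$$
is exact. Taking global sections, using $H^{0}(\mc T_{\mc Q(E,d_z)/S})=0$ and the projection formula, yields an injection
$$H^{0}(\mc Q(E,d_z),\mc T_{\mc Q(E,d_z)})\hookrightarrow H^{0}(S,\mc T_{S})\,.$$
Since $S=\mc Q(\mc O^r_C,d_p)$ with $C$ of genus $\geq 2$, the corollary computes $H^{0}(S,\mc T_{S})=\mf{sl}(r)$, giving the upper bound $H^{0}(\mc Q(E,d_z),\mc T_{\mc Q(E,d_z)})\subseteq \mf{sl}(r)$.

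For the reverse inclusion I would exhibit an honest ${\rm PGL}(r)$-action: ${\rm GL}(r)$ acts on $\mc O^r_C$, hence on $S=\mc Q(\mc O^r_C,d_p)$ and on the universal bundle $\mc A(r,d_p)$, therefore on $E=\mc A(r,d_p)^{\vee}$ and on its relative Quot scheme $\mc Q(E,d_z)$; the centre $k^{*}$ acts trivially, so this descends to a faithful ${\rm PGL}(r)$-action for which $\pi$ is equivariant. This produces $\mf{sl}(r)\hookrightarrow H^{0}(\mc Q(E,d_z),\mc T_{\mc Q(E,d_z)})$, forcing equality and proving (2). Finally, as ${\rm char}~k=0$ the group ${\rm Aut}^{o}(\mc Q(r,d_p,d_z))$ is smooth with Lie algebra $\mf{sl}(r)$, and the faithful action gives a closed immersion ${\rm PGL}(r)\hookrightarrow {\rm Aut}^{o}(\mc Q(r,d_p,d_z))$; comparing Lie algebras (equivalently dimensions) shows it is an isomorphism, proving (1).

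The main obstacle is precisely the passage from relative to absolute data. The vanishing of the relative sections makes injectivity $H^{0}(\mc T_{\mc Q(E,d_z)})\hookrightarrow H^{0}(S,\mc T_{S})$ automatic, but surjectivity is not formal: the horizontal directions coming from $H^{0}(S,\mc T_{S})$ need not lift a priori, the potential obstruction living in $H^{1}(\mc T_{\mc Q(E,d_z)/S})$, which I do not control directly. This is exactly why producing the explicit ${\rm PGL}(r)$-lift, rather than arguing cohomologically, is the crucial move that closes the argument.
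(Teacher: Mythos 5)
Your proposal is correct and follows essentially the same route as the paper: realize $\mc Q(r,d_p,d_z)$ as the relative Quot scheme of $E=\mc A(r,d_p)^{\vee}$ over $S=\mc Q(\mc O^r_C,d_p)$, use stability of $\mc A(r,d_p)$ from \cite[Theorem 3.2.4]{G18} to get $H^0({\rm ad}~E)=0$, and then apply Theorem \ref{main theorem} via Corollary \ref{corollary-g greater than 1}. The only difference is that you make explicit the relative-to-absolute passage --- the injection $H^{0}(\mc T_{\mc Q(E,d_z)})\hookrightarrow H^{0}(S,\mc T_{S})=\mf{sl}(r)$ coming from the relative tangent sequence together with the reverse inclusion furnished by the ${\rm PGL}(r)$-action lifted from $\mc O^r_C$ --- a step the paper compresses into ``from Theorem \ref{main theorem} we get the result''; you correctly identify this as the point requiring an argument, and the one you give is the intended one.
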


\begin{corollary}
Let $C$ be a smooth projective curve over an algebraically closed field $k$. Let $E$ be a vector bundle of rank $\geq 3$ over $C$. Fix $d\geq 1$. Let $\mathcal{Q}(E,d)$ be the quot scheme of torsion quotients of $E$ of degree $d$. Then we have
\begin{enumerate}
\item\label{projective line} If genus of $C=0$,i.e. $C\cong \mathbb{P}^{1}$, then
$${\rm Aut}^{o}(\mathcal{Q}(E,d))={\rm PGL}(2,k)\ltimes {\rm GL}(E)/k^{*}\,.$$
\item\label{semistable bundles-elliptic curves} If genus of $C=1$ and if $E$ is semistable then 
we have the following sequence of algebraic groups
$$0 \to {\rm GL}(E)/k^*\to {\rm Aut}^o(\mc Q(E,d))\to {\rm Aut}^o(C)\to 0\,.$$
\item\label{unstable bundles-elliptic curves} If $E$ is not semistable, then ${\rm Aut}^{o}(\mathcal{Q}(E,d))=\text{GL}(E)/k^{*}$.
\end{enumerate}

\end{corollary}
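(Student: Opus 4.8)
The plan is to reduce the whole statement to the known structure of the automorphism group of the projective bundle $\mathbb{P}(E)\to C$, and then to treat the two genera separately. Specialising Theorem \ref{main theorem} to $S={\rm Spec}\,k$ (so $X=C$) and using that ${\rm rank}~E\geq 3$, we get ${\rm Aut}^{o}(\mathcal{Q}(E,d))\cong {\rm Aut}^{o}(\mathbb{P}(E))$, while Corollary \ref{main corollary} supplies the left exact sequence $0\to {\rm GL}(E)/k^{*}\to {\rm Aut}^{o}(\mathbb{P}(E))\to {\rm Aut}^{o}(C)$. Thus the vertical subgroup ${\rm GL}(E)/k^{*}$ is already identified as the kernel, and in each case it remains only to determine the image of ${\rm Aut}^{o}(\mathbb{P}(E))\to {\rm Aut}^{o}(C)$. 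As in the proof of Lemma \ref{inclusion of aut of quot in aut of proj} (via \cite[Corollary 2.2]{MO}), an element $\phi\in {\rm Aut}^{o}(C)$ lies in this image precisely when it lifts to $\mathbb{P}(E)$, i.e. when $\phi^{*}E\cong E\otimes L$ for some $L\in {\rm Pic}(C)$; comparing degrees forces $L\in {\rm Pic}^{0}(C)$.

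For (1), ${\rm Aut}^{o}(\mathbb{P}^{1})={\rm PGL}(2,k)$. By Grothendieck's theorem $E\cong\bigoplus_{i}\mathcal{O}(a_{i})$, and since ${\rm Pic}(\mathbb{P}^{1})=\mathbb{Z}$ is rigid we have $\phi^{*}E\cong E$ for every $\phi$, so the right-hand map is surjective. To obtain the semidirect product I would produce a section of ${\rm PGL}(2,k)$: writing $\mathbb{P}^{1}=\mathbb{P}(W)$ for the standard representation $W$, the tautological bundle is ${\rm GL}(W)$-equivariant, hence so is each $\mathcal{O}(a_{i})$ and therefore $E$, giving a homomorphism ${\rm GL}(2,k)\to {\rm GL}(E)\to {\rm Aut}^{o}(\mathbb{P}(E))$ lying over ${\rm GL}(2,k)\to {\rm PGL}(2,k)$. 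The induced action on the total space should descend to the sought ${\rm PGL}(2,k)$-action, and its conjugation on the vertical factor ${\rm GL}(E)/k^{*}$ then realises ${\rm Aut}^{o}(\mathbb{P}(E))$ as ${\rm PGL}(2,k)\ltimes {\rm GL}(E)/k^{*}$.

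For (2) and (3) we take $C$ to be an elliptic curve, so ${\rm Aut}^{o}(C)=C$ acting by translations $t_{x}$. The decisive input is the classical fact that on an elliptic curve a bundle is semistable if and only if it is semi-homogeneous, meaning $t_{x}^{*}E\cong E\otimes L_{x}$ with $L_{x}\in {\rm Pic}^{0}(C)$ for all $x$ (Atiyah's classification of bundles over elliptic curves, and Mukai's theory of semi-homogeneous bundles). In case (2), semistability of $E$ yields semi-homogeneity, so every translation lifts and ${\rm Aut}^{o}(\mathbb{P}(E))\to C$ is surjective; combined with the left exactness from Corollary \ref{main corollary} this gives the short exact sequence. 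In case (3), with $E$ not semistable (the genus $\geq 2$ statement being already Corollary \ref{corollary-g greater than 1}), the image of ${\rm Aut}^{o}(\mathbb{P}(E))\to C$ is a connected algebraic subgroup of the elliptic curve, hence either trivial or all of $C$; were it all of $C$, every translation would lift, forcing $E$ to be semi-homogeneous and therefore semistable, a contradiction, so the image is trivial and ${\rm Aut}^{o}(\mathbb{P}(E))={\rm GL}(E)/k^{*}$.

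The step I expect to be the main obstacle is the elliptic-curve analysis underlying (2) and (3), namely the semistable $\Leftrightarrow$ semi-homogeneous dictionary: the forward direction used for surjectivity in (2) is not formal and rests on Atiyah's classification rather than on general properties of ${\rm Aut}^{o}$. A second delicate point is the splitting in (1): the equivariant construction naturally produces only a ${\rm GL}(2,k)$- (equivalently ${\rm SL}(2,k)$-) action, and one must verify that $-I$ acts trivially on $\mathbb{P}(E)$ so that the action descends to ${\rm PGL}(2,k)$ and the extension genuinely splits rather than forming a central product; this requires care with the linearizations of the summands $\mathcal{O}(a_{i})$.
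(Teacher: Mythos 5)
Your reduction via Theorem \ref{main theorem} and Corollary \ref{main corollary} and your treatment of parts (2) and (3) follow the paper's proof essentially verbatim: the paper likewise handles (2) by decomposing a semistable $E$ into indecomposables of equal slope, citing Atiyah for semi-homogeneity of indecomposables and Mukai's Proposition 6.9 for their direct sum, and handles (3) by Mukai's implication semi-homogeneous $\Rightarrow$ semistable; your explicit remark that the image of ${\rm Aut}^{o}(\mathbb{P}(E))\to C$ is a connected subgroup of an elliptic curve, hence trivial or all of $C$, is exactly the step the paper leaves implicit. Those two parts are correct.

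The genuine gap is in part (1), and it is precisely the point you flagged as delicate: the descent of the linearized action to ${\rm PGL}(2,k)$ cannot be carried out in general, so your argument (and, for what it is worth, the paper's own proof, which simply asserts that the homomorphism ${\rm GL}(2)\to {\rm Aut}^{o}(\mathcal{Q}(E,d))$ ``factors through ${\rm PGL}(2,k)$'') breaks down. With the canonical linearizations, $-I$ acts on $E=\bigoplus_{i}\mathcal{O}(a_{i})$ by ${\rm diag}((-1)^{a_{i}})$, which is scalar modulo $k^{*}$ only when all $a_{i}$ have the same parity; and no twist or re-linearization repairs this. Indeed, a section of ${\rm Aut}^{o}(\mathbb{P}(E))\to {\rm PGL}(2,k)$ is the same thing as a lift of the ${\rm PGL}(2,k)$-action to $\mathbb{P}(E)$; by equivariant descent along $\mathbb{P}^{1}={\rm PGL}(2,k)/B$ (projective representations of the connected solvable group $B$ lift to linear ones in characteristic zero), such a lift exists if and only if some twist $E\otimes\mathcal{O}(m)$ is the bundle associated to a linear $B$-representation, and these bundles are exactly those in which every odd degree occurs with even multiplicity. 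For $E=\mathcal{O}\oplus\mathcal{O}(1)\oplus\mathcal{O}(2)$, of rank $3$, this fails for every $m$. Moreover, for this $E$ the algebra ${\rm End}(E)$ is triangular, so ${\rm GL}(E)/k^{*}$ is solvable; hence any subgroup of ${\rm Aut}^{o}(\mathbb{P}(E))$ isomorphic to ${\rm PGL}(2,k)$ would meet the kernel trivially and, after composing with an inner automorphism of ${\rm PGL}(2,k)$, would yield a section. Since none exists, ${\rm Aut}^{o}(\mathcal{Q}(E,d))\cong {\rm Aut}^{o}(\mathbb{P}(E))$ contains no copy of ${\rm PGL}(2,k)$ at all: it contains ${\rm SL}(2,k)$, mapping two-to-one onto ${\rm PGL}(2,k)$, and the extension $1\to {\rm GL}(E)/k^{*}\to {\rm Aut}^{o}(\mathbb{P}(E))\to {\rm PGL}(2,k)\to 1$ is non-split. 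So the obstruction you identified is not a verification to be supplied but an actual failure, invisible at the Lie algebra level: part (1) holds only under a parity condition on the splitting type of $E$ (for instance for twists of the trivial bundle), and neither your argument nor the paper's establishes it in the stated generality.
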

\begin{proof}
If $C\cong \mathbb{P}^{1}$ then any vector bundle $E$ admits a $\text{GL}(2)$ linearisation, in paricular we have a homomorphism GL$(2)\rightarrow \text{Aut}^{o}(\mathcal{Q}(E,d))$. This homomorphism factors through $\text{PGL}(2,k)$ and gives a section to the map $\text{Aut}^{o}(\mathcal{Q}(E,d))\rightarrow \text{PGL}(2,k)$. Therefore the left exact sequence in Corollary \ref{main corollary} is exact in this case and it splits.

From now on we assume that genus of $C$ is $1$ i.e. $C$ is an elliptic curve. Recall that a bundle $E$ is called semi-homogeneous if ${\rm Aut}^o(\mb P(E))\to {\rm Aut}^o(C)=C$ is surjective (\cite[Definition 5.2]{Mu}). By \cite[Proposition 6.13]{Mu} every semi-homogenous bundle is semistable. Hence (\ref{unstable bundles-elliptic curves}) follows from Corollary \ref{main corollary}. Let us assume $E$ is semistable. Then $E\cong \oplus E_{i}$, where $E_i$ are indecomposable of slope $\mu(E_i)=\mu(E)$. By \cite[Theorem 10]{At} any indecomposable bundle over $C$ is semi-homogenous and therefore by \cite[Proposition 6.9]{Mu} we have that $E$ is semi-homogenous. Now (\ref{semistable bundles-elliptic curves}) follows from Corollary \ref{main corollary}. 
\if
then  Let $E$ be a semistable bundle on $C$. 
We   We show that for any $g\in \text{Aut}^{o}(C)$ $g^{*}E\cong E$. This will show that the sequence (3.1) is also right exact in this case.\\ 
We know that  $E\cong \bigoplus E_{i}$, where $E_{i}$'s are indecomposable vector bundles. Since, $E$ is semistable, hence $\mu(E_{i})=\mu(E_{j})$ $\forall i,j$. Since the $\mathcal{Q}(E,d)\cong \mathcal{Q}(E\bigotimes \mathcal{L},d)$ canonically for any line bundle $\mathcal{L}$, after twisting $E$ by a line bundle of appropriate degree, we can assume $\mu(E)=\mu(E_{i})=0$ $\forall i$. By [5,Th.5(i)], $E_{i}\cong F_{r_{i}}\bigotimes \mathcal{L}_{i}$, where $F_{r_{i}}$ is the unique indecomposable vector bundle of rank $r_{i}$ with $H^{0}(C,E_{r_{i}})\neq 0$ and $\mathcal{L}$ is a line bundle of degree $0$.\\
It follows that for any $g\in \text{Aut}^{o}(C)$, $g^{*}F_{r}\cong F_{r}$, since $g^{*}F_{r}$ is also indecomposable bundle of degree $0$ and rank $r$ with $H^{0}(C,g^{*}F_{r})\neq 0$.\\
So, we need to show that $g^{*}\mathcal{L}\cong \mathcal{L}$ for any $\mathcal{L}\in \text{Pic}^{o}(C)$.\\
Fix a base point $x_{0}\in C$. Then, under the group structure of $C$ with $x_{0}$ as the identity, $\text{Aut}^{o}(C)\cong C$ i.e. any element of $\text{Aut}^{o}(C)$ is given by $y \mapsto y+_{C}x$ for a fixed $x\in C$.(Here we denote the group addition of $C$ by $+_{C}$) Fix such an automorphism $x\in C\cong \text{Aut}^{0}(C)$\\
Now, $(C,x_{0},+_{C})\cong \text{Pic}^{o}(C)$, with the morphism given by $z \mapsto \mathcal{O}(z-x_{0})$.\\
Let us assume $\mathcal{L}=\mathcal{O}(z-x_{0})$ for $z\in C$.\\
Then $x^{*}\mathcal{L}=\mathcal{O}((z-_{C}x)-(x_{0}-_{C}x))$.\\
Since, $(C,x_{o},+_{C})\cong \text{Pic}^{0}(C)$ is a homomorphism, it follows that \\$x^{*}\mathcal{L}\cong \mathcal{O}((z-_{C}x)-(x_{0}-_{C}x))=\mathcal{O}(((z-_{C}x)-x_{0})-((x_{0}-_{C}x)-x_{0}))\\
=\mathcal{O}((z-x_{0})-(x-x_{0})-(x_{0}-x_{0})+(x-x_{0}))=\mathcal{O}(z-x_{0})=\mathcal{L}$.\\
Hence, it follows that for any $g\in \text{Aut}^{o}(C)$, $g^{*}E\cong E$ for any $E$ semistable.\\
(ii) By [6, Proposition 6.13], every semi-homogenous vector bundle [6, Definition 5.2] is semistable. In particular, if $E$ is not semistable, then the map $H^{0}(\mathbb{P}(E),\mathcal{T}_{\mathbb{P}(E)})\rightarrow H^{0}(C,\mathcal{T}_{C})$ is zero. Hence, using sequence (3.2), we get $H^{0}(C,\text{ad }E)\rightarrow H^{0}(\mathcal{Q}(E,d),\mathcal{T}_{\mathcal{Q}(E,d)})$ is an isomorphism. From this, it follows that $\text{Aut}^{o}(\mathcal{Q}(E,d))=\text{GL}(E)/k^{*}$.
 \fi
\end{proof}

%\bibliographystyle{halpha}
%\bibliography{chandra}

\end{document}